\def\@settitle{\begin{center}%
  \baselineskip14\p@\relax
  \bfseries
  \uppercasenonmath\@title
  \@title
  \ifx\@subtitle\@empty\else
     \\[1ex]\uppercasenonmath\@subtitle
     \footnotesize\mdseries\@subtitle
  \fi
  \end{center}%
}
\def\subtitle#1{\gdef\@subtitle{#1}}
\def\@subtitle{}
\newtheorem{cor}{Corollary}
\newtheorem{thm}{Theorem}
\newtheorem{prop}{Proposition}
\newtheorem{lem}{Lemma}
\newtheorem{defn}{Definition}
\newtheorem{prob}{Problem}
\newtheorem{conj}{Conjecture}
\newtheorem{rem}{Remark}
\DeclareMathOperator{\conv}{conv}
\DeclareMathOperator{\vol}{vol}
\DeclareMathOperator{\area}{area}
\DeclareMathOperator{\perim}{perim}
\DeclareMathOperator{\inter}{int}
\DeclareMathOperator{\bd}{bd}
\DeclareMathOperator{\proj}{proj}
\newcommand{\xx}{\mathbf{x}}
\newcommand{\K}{\mathcal{K}}
\newcommand{\KK}{\mathbf K}
\renewcommand{\Re}{\mathbb{R}}
\newcommand{\Z}{\mathbb{Z}}
\newcommand{\Eu}{\mathbb{E}}
\newcommand{\Sph}{\mathbb{S}}
\newcommand{\Sd}{\mathbb{S}^{d-1}}
\newcommand{\Ee}{{\mathbb E}}
\newcommand{\Ed}{\Ee^d}
\newcommand{\F}{\mathcal{F}}
\newcommand{\C}{\mathbf{C}}
\newcommand{\e}{\mathbf{e}}
\newcommand{\p}{\mathbf{p}}
\renewcommand{\o}{\mathbf{o}}
\newcommand{\y}{\mathbf{y}}
\title{Notes on non-separable arrangements of convex bodies
\footnote{Keywords and phrases: Non-separable arrangement, weakly non-separable family, weakly $k$-impassable family, stability, convex body, convex polytope, positive homothetic copy. \newline \hspace*{.35cm} 2010 Mathematics Subject Classification: 52C17, 52C15, 52A40}}
\author{K\'{a}roly Bezdek\thanks{Partially supported by a Natural Sciences and 
Engineering Research Council of Canada Discovery Grant.} and Zsolt L\'angi\thanks{Partially supported by the National Research, Development and Innovation Office, NKFI, K-147544, the ERC Advanced Grant ``ERMiD'', and Project no. TKP2021-NVA-09 with the
support provided by the Ministry of Innovation and Technology of Hungary from the National Research, Development and Innovation Fund and financed under the TKP2021-NVA funding scheme}
}
\begin{document}

\maketitle

\begin{abstract}
A problem posed by Erd\H os in 1945 initiated the study of non-separable arrangements of convex bodies. A finite collection of convex bodies in Euclidean $d$-space is called a non-separable family (or NS-family) if every hyperplane intersecting their convex hull also intersects at least one member of the family. Recent work has focused on minimal coverings of NS-families consisting of positive homothetic convex bodies. In this paper, we strengthen these results by establishing their analogues for weakly non-separable families of convex polytopes. We further obtain stability results and analyze maximal weakly non-separable families of cubes. As an additional extension, we also examine weakly $k$-impassable families of convex $d$-polytopes for $0<k<d-1$.
\end{abstract}

\section{Non-separable arrangements - a brief overview}

Goodman and Goodman \cite{GG45} proved an elementary conjecture of Erd\H os: If $n>1$ circles in the plane have radii $r_1, r_2, \dots , r_n$ and no line separates them, then they can all be covered by a circle of radius $\sum_{i=1}^n r_i$. Shortly after the appearance of \cite{GG45}, Hadwiger \cite{Had} broadened the question and initiated a line of research that has since been pursued by others. In what follows, we recall some basic definitions and key concepts from the authors' paper \cite{BezLan16}. Let $\Ee^d$ denote the $d$-dimensional Euclidean space with the standard Euclidean norm $\|\cdot\|$. Let $X+Y$ denote the Minkowski sum or simply, vector sum of the sets $X, Y\subseteq \Ee^d$. If $\KK$ is a convex body, i.e., a compact convex set with nonempty interior in $\Ee^d$, then let $\K := \{ \xx_i + \tau_i \KK\ |\  \xx_i\in \Ee^d, \tau_i>0, i=1,2,\ldots, n\}$, where $d\ge 2$, $n\ge 2$ and $\xx_i + \tau_i \KK$ is called a positive homothetic copy of $\KK$ with homothety coefficient $\tau_i>0$. Assume that $\K$ is a {\it non-separable family}, in short, an {\it NS-family}, meaning that every hyperplane intersecting $\conv \left( \bigcup \K \right)$ intersects a member of $\K$ in $\Ee^d$, i.e., there is no hyperplane that strictly separates some elements of $\K$ from all the other elements of $\K$ in $\Ee^d$. Then, let $\lambda(\K) > 0$ denote the smallest positive value $\lambda$ such that a translate of
$\lambda \left( \sum_{i=1}^n \tau_i \right) \KK$ covers $\bigcup \K$. As in \cite{BezLan16}, for $d \geq 2$ let $\lambda_{\rm simplex}^{(d)}$ denote the supremum of $\lambda(\K)$, where $\K$ runs over the NS-families of finitely many positive homothetic $d$-simplices in $\Ee^d$. It was proved in \cite{BezLan16} that $\lambda_{\rm simplex}^{(d)}$ is a non-decreasing sequence of $d$. Moreover, $\lambda_{\rm simplex}^{(d)} \geq \lambda_{\rm simplex}^{(2)}\geq 2/3+2/(3\sqrt{3})= 1.0515\ldots>1$ for all $d\ge 3$. The special role of $\lambda_{\rm simplex}^{(d)}$ is highlighted by the following statement proved by the authors in \cite{BezLan16}.

\begin{thm}\label{simplex bound}
For all $d\ge 2$, $\lambda_{\rm simplex}^{(d)}=\sup_{\K} \lambda(\K)$, where $\K$ runs over the NS-families of finitely many positive homothetic copies of an arbitrary convex body $\KK$ in $\Ee^d$.
\end{thm}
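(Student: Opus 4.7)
The inequality $\lambda_{\rm simplex}^{(d)} \leq \sup_\K \lambda(\K)$ is trivial since simplices are themselves convex bodies, so the substantive task is to show that for an arbitrary convex body $\KK$ in $\Ee^d$ and any NS-family $\K = \{\xx_i + \tau_i \KK\}_{i=1}^n$ of positive homothetic copies of $\KK$, one has $\lambda(\K) \leq \lambda_{\rm simplex}^{(d)}$. My first step would be a Hausdorff continuity argument on $\KK$ to reduce to the case where $\KK$ is a convex polytope: both the non-separability property and the quantity $\lambda(\K)$ depend continuously on $\KK$ under small outer Hausdorff perturbations, so an approximation by circumscribed polytopes passes to the limit.

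Second, I would write the polytope $\KK$ as a finite intersection $\KK = \bigcap_{j=1}^m \TT_j$ of circumscribed $d$-simplices. Such a decomposition exists: for each facet hyperplane $H$ of $\KK$ one can build a $d$-simplex $\TT_j \supseteq \KK$ having $H$ as one of its facets and the remaining $d$ facets chosen as supporting hyperplanes of $\KK$ placed far enough out to close up a simplex. Replacing each body of $\K$ by the corresponding homothetic copy of $\TT_j$, I define $\K_j := \{\xx_i + \tau_i \TT_j\}_{i=1}^n$. The family $\K_j$ inherits the NS property from $\K$, because any hyperplane strictly separating members of $\K_j$ would also strictly separate the smaller corresponding members of $\K$, contradicting the NS property of $\K$. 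Applying the definition of $\lambda_{\rm simplex}^{(d)}$ to each simplex NS-family $\K_j$ then yields translates $\xx^{(j)}$ with $\bigcup \K \subseteq \bigcup \K_j \subseteq \xx^{(j)} + \lambda_{\rm simplex}^{(d)} R \TT_j$, where $R = \sum_{i=1}^n \tau_i$.

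The crux of the argument is to produce a single translation vector $\xx^*$ that works for all $j$ simultaneously, for then
\[
\bigcup \K \;\subseteq\; \bigcap_{j=1}^m\bigl(\xx^* + \lambda_{\rm simplex}^{(d)} R \TT_j\bigr) \;=\; \xx^* + \lambda_{\rm simplex}^{(d)} R \bigcap_{j=1}^m \TT_j \;=\; \xx^* + \lambda_{\rm simplex}^{(d)} R \KK,
\]
which is the desired bound. Setting $V_j := \{\xx : \bigcup \K \subseteq \xx + \lambda_{\rm simplex}^{(d)} R \TT_j\}$, each $V_j$ is a nonempty convex subset of $\Ee^d$, so the problem reduces to verifying $\bigcap_{j=1}^m V_j \ne \emptyset$. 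I expect this to be the main obstacle: the natural attempt via Helly's theorem reduces the question to showing that every $d+1$ of the $V_j$ share a common point, but $\bigcap_{j \in J} \TT_j$ for a $(d+1)$-subset $J$ is generally not itself a simplex, so the bound defining $\lambda_{\rm simplex}^{(d)}$ does not immediately certify $\bigcap_{j\in J} V_j \neq \emptyset$. Overcoming this step likely requires either a more delicate choice of the simplex decomposition (so that partial intersections remain simplex-like, or can be further decomposed), an inductive refinement on the number of facets of $\KK$ exceeding $d+1$, or a direct compactness/extremal argument showing that a supremum-achieving NS-family must itself be a simplex family.
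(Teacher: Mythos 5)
Your opening reductions are sound in outline: the trivial direction, the approximation of $\KK$ by a polytope, the observation that each $\K_j$ inherits non-separability because every copy $\xx_i+\tau_i\KK$ sits inside $\xx_i+\tau_i\TT_j$, and the formal reduction of the problem to showing $\bigcap_j V_j\neq\emptyset$. You have also correctly located the crux: for a $(d+1)$-element set $J$ the intersection $\bigcap_{j\in J}\TT_j$ is not a simplex, so Helly applied to the family $\{V_j\}$ does not close. This is a genuine gap, and the rescue ideas you float at the end do not match the fix that is actually used.

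The repair in the paper (carried out there for the weakly non-separable analogue, Theorem~\ref{WNS-simplex bound}; the NS statement itself is cited from \cite{BezLan16}) is to apply Helly at a finer granularity and to impose a genericity condition first. After approximating $\KK$ by a polytope, one passes further to a \emph{generic} polytope $\mathbf{P}$ in which any $d$ facet normals are linearly independent (this is Lemma~\ref{WNS-sup}). Then for each facet of $\mathbf{P}$, with closed supporting halfspace $H_\ell^+$, one forms the halfspace $W_\ell:=\{\xx\ :\ \conv(\bigcup\K)\subseteq\xx+\lambda_{\rm simplex}^{(d)}R\,H_\ell^+\}$ and applies Helly to the family $\{W_\ell\}$ -- one \emph{halfspace per facet}, not one convex set per circumscribed simplex. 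For any $d+1$ facets, genericity forces $\bigcap_j H_{\ell_j}^+$ to be either a circumscribed simplex of $\mathbf{P}$ (and then $\bigcap_j W_{\ell_j}\neq\emptyset$ follows from the simplex case exactly as in your step with $\K_j$) or a region containing arbitrarily large balls (and then $\bigcap_j W_{\ell_j}\neq\emptyset$ trivially). Without genericity a third possibility appears -- an unbounded intersection whose recession cone is lower-dimensional -- for which neither alternative applies, and the Helly step breaks. This dichotomy is precisely the content of the strengthened Lutwak containment lemma, Lemma~\ref{Lutwak-type}. So the decomposition $\KK=\bigcap_j\TT_j$ into circumscribed simplices should be replaced by the decomposition of $\KK$ into its own facet halfspaces, together with the preliminary reduction to generic polytopes that makes the two-alternative dichotomy valid.
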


On the other hand, improving the upper bound $\lambda_{\rm simplex}^{(d)}\leq d$ from \cite{BezLan16}, Akopyan, Balitskiy, and Grigorev \cite{ABG} have derived the following stronger estimate. (See also Theorem~\ref{ABG-bound-original}.)

\begin{thm}[Akopyan, Balitskiy, and Grigorev, 2018]\label{ABG-bound}
$\lambda_{\rm simplex}^{(d)}\leq (d+1)/2$ for all $d\geq 2$.
\end{thm}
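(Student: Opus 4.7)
My plan is to combine a support-function formula for the smallest positive homothet of $T$ that covers $\bigcup\K$ with an elementary width estimate coming from non-separability, and then to close the remaining gap with an auxiliary ``shape-bias'' inequality. By Theorem~\ref{simplex bound} it suffices to bound $\lambda(\K)$ over NS-families $\K=\{\x_i+\tau_i T\}_{i=1}^{n}$ of positive homothetic copies of a single $d$-simplex $T$. After rescaling and translating, I assume $\sum_i\tau_i=1$ and that the centroid of $T$ coincides with $\o$; write $u_0,\dots,u_d\in\Sd$ for the outward unit normals to the facets $F_0,\dots,F_d$ of $T$ and set $\alpha_j=\vol_{d-1}(F_j)$. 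Minkowski's relation reads $\sum_{j=0}^d\alpha_j u_j=\o$, and the centering of $T$ gives $h_{-u_j}(T)=d\,h_{u_j}(T)$, whence $\sum_j\alpha_j h_{u_j}(T)=d\vol(T)$ and $w_{u_j}(T)=(d+1)h_{u_j}(T)$.

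A translate $\y+\mu T$ contains $\bigcup\K$ iff $\mu h_{u_j}(T)\ge h_{u_j}(\bigcup\K)-\langle u_j,\y\rangle$ for every $j$, and an LP-duality argument using the dual certificate $\lambda_j=\alpha_j/(d\vol(T))$ (feasible because $\sum_j\alpha_j u_j=\o$ and $\sum_j\alpha_j h_{u_j}(T)=d\vol(T)$) yields the closed form $\mu^*(\K)=\frac{1}{d\vol(T)}\sum_{j=0}^{d}\alpha_j h_{u_j}(\bigcup\K)$. The same derivation applied to $-T$ produces $\mu^*_-(\K)=\frac{1}{d\vol(T)}\sum_{j=0}^{d}\alpha_j h_{-u_j}(\bigcup\K)$ for the smallest cover by a translate of $\mu(-T)$. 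On the other hand, non-separability in any direction $u\in\Sd$ forces the projection of $\bigcup\K$ onto the $u$-axis to be a single interval of length at most $\sum_i\tau_i w_u(T)=w_u(T)$, so $h_u(\bigcup\K)+h_{-u}(\bigcup\K)\le w_u(T)$. Specializing to $u=u_j$, multiplying by $\alpha_j$, summing, and using $\sum_j\alpha_j w_{u_j}(T)=d(d+1)\vol(T)$, I obtain the symmetric estimate
\[
\mu^*(\K)+\mu^*_-(\K)\le d+1.
\]

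What remains is to pass from this symmetric bound to the one-sided inequality $\mu^*(\K)\le(d+1)/2$, which is the main obstacle. I plan to do so by establishing the companion estimate $\mu^*(\K)\le\mu^*_-(\K)$ for every NS-family of positive homothets of $T$: combined with the previous display, this immediately forces $2\mu^*(\K)\le d+1$ and, since $\sum_i\tau_i=1$, gives $\lambda(\K)\le(d+1)/2$. The intuition behind $\mu^*\le\mu^*_-$ is a shape-bias: each individual member $K_i$ satisfies $\mu^*(\{K_i\})=\tau_i$ and $\mu^*_-(\{K_i\})=d\tau_i$, reflecting the Minkowski asymmetry $d$ of the simplex, so every $K_i$ is far more $T$-shaped than $(-T)$-shaped, and non-separability should propagate this bias from the individual members to the union. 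To make this rigorous, I would fix an optimal enclosing simplex $\mu^*(\K)T+\y^*$, choose for each facet $F_j$ a touching member $K_{i(j)}\in\K$, and then exploit the homothety identity $h_{u_j}(K_i)+h_{-u_j}(K_i)=\tau_i w_{u_j}(T)$ together with the containments $K_{i(j)}\subseteq\mu^*(\K)T+\y^*$ to convert each maximal $h_{u_j}(\bigcup\K)$-contribution into a matching lower bound on $h_{-u_j}(\bigcup\K)$; an induction on $n=|\K|$ or on the number of facets simultaneously touched by a single member is likely needed to handle the combinatorics of which bodies touch which facets.
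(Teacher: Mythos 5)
Your LP-duality framework is correct and clean: with the centroid of $T$ at the origin one indeed has $h_{-u_j}(T)=d\,h_{u_j}(T)$, the Minkowski relation $\sum_j\alpha_j u_j=\o$ together with $\sum_j\alpha_j h_{u_j}(T)=d\vol(T)$ makes $\lambda_j=\alpha_j/(d\vol(T))$ the unique dual-feasible certificate, and strong duality gives the closed form $\mu^*(\K)=\frac{1}{d\vol(T)}\sum_j\alpha_j h_{u_j}(\bigcup\K)$; multiplying the one-dimensional Goodman--Goodman width bound $h_{u_j}(\bigcup\K)+h_{-u_j}(\bigcup\K)\le w_{u_j}(T)$ by $\alpha_j$ and summing yields the valid symmetric estimate $\mu^*(\K)+\mu^*_-(\K)\le d+1$.

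The argument fails, however, at the step you yourself flag as the main obstacle: the shape-bias inequality $\mu^*(\K)\le\mu^*_-(\K)$ is \emph{false} for NS-families, and no induction on touching facets can rescue it. The bias that each individual member $K_i$ satisfies $\mu^*(\{K_i\})=\tau_i$ but $\mu^*_-(\{K_i\})=d\tau_i$ does \emph{not} propagate to the convex hull. For a counterexample in $d=2$, let $T$ be an equilateral triangle and form a closed chain of $n$ small translates of $\tau T$ (overlapping slightly, so no separating line can slip through gaps) tracing the boundary of a large copy $\rho\,(-T)$. This is a non-separable family whose convex hull is, up to $O(\tau)$, the set $\rho\,(-T)$. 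Then $\mu^*_-(\K)\approx\rho$ while $\mu^*(\K)\approx d\rho=2\rho$, so $\mu^*(\K)>\mu^*_-(\K)$. (This example is harmless for the theorem itself, since $\sum_i\tau_i$ is on the order of $6\rho$ and hence $\lambda(\K)\approx 1/3\ll (d+1)/2$, but it destroys the lemma you intend to prove.) Without that lemma, the symmetric bound alone only gives $\mu^*(\K)\le d+1-\mu^*_-(\K)$, which can be as weak as nearly $d+1$.

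The proof in the literature (Akopyan--Balitskiy--Grigorev, mirrored in this paper's Theorem~\ref{strongerABG}) sidesteps any comparison between optimal $T$-covers and optimal $(-T)$-covers: one centers $T$ so that $T^\circ\subseteq-\sigma T^\circ$ with $\sigma=\sigma(T)=d$, then picks the \emph{explicit} weighted center $\xx=\frac{\sum_i\tau_i\xx_i}{\sum_i\tau_i}$ and verifies directly that $\xx+\frac{\sigma+1}{2}\bigl(\sum_i\tau_i\bigr)T$ covers $\bigcup\K$, facet by facet, by projecting onto a line perpendicular to each facet normal and applying the one-dimensional Goodman--Goodman lemma; the fact that the projected origin divides $\pi(T)$ in a ratio $1:s$ with $s\in[1,\sigma]$ is exactly what makes the factor $\frac{\sigma+1}{2}$ suffice. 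Choosing a single explicit center and comparing projected intervals is what buys the factor $\tfrac12$ that your symmetric bound cannot deliver on its own.
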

Unfortunately, the above estimate does not seem to be sharp even for $d=2$. In fact, the authors (see \cite[Problem 1]{BezLan16}) asked whether $\lambda_{\rm simplex}^{(2)}=2/3+2/(3\sqrt{3})= 1.0515\ldots$ holds and raised also the following higher dimensional question.

\begin{prob}\label{problem1}
Find $\lambda_{\rm simplex}^{(d)}$ for any given $d\ge 2$. In particular, is there an absolute constant $c>0$ such that $\lambda_{\rm simplex}^{(d)}\leq c$ holds for all $d\ge 2$?
\end{prob}

However, if the convex body $\KK$ is centrally symmetric, then one can expect much stronger results. Here we recall that a set $S \subset \Ee^d$ is centrally symmetric with center $\mathbf{q} \in \Ee^d$ if $S= 2\mathbf{q}-S$, and $\mathbf{o}$-symmetric if $S=-S$, where $\mathbf{o}$ denotes the origin of $\Ee^d$. The following result was proved by the authors in \cite{BezLan16}.

\begin{thm}\label{centrally symmetric convex bodies}
For all $d\ge 2$ and $n\ge 2$, and for every $\mathbf{o}$-symmetric convex body $\KK_0$ and every NS-family $\K := \{ \xx_i + \tau_i \KK_0\ |\  \xx_i\in \Ee^d, \tau_i>0, i=1,2,\ldots, n\}$ the inequality $\lambda(\K) \leq 1$ holds.
\end{thm}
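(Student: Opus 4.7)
The plan is to recast the covering inequality as an intersection problem and then attack it with Helly's theorem together with a projection argument. Set $T=\sum_{i=1}^n\tau_i$ and let $\|\cdot\|_{\KK_0}$ denote the Minkowski norm with unit ball $\KK_0$. Because $\KK_0=-\KK_0$, the containment $\xx_i+\tau_i\KK_0\subseteq\c+T\KK_0$ is equivalent to $\|\c-\xx_i\|_{\KK_0}\le T-\tau_i$, that is, $\c\in A_i:=\xx_i+(T-\tau_i)\KK_0$. Hence the theorem reduces to proving $\bigcap_{i=1}^n A_i\ne\emptyset$.

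For $n=2$, non-separability forces $\K_1\cap\K_2\ne\emptyset$ (two disjoint compact convex sets can always be strictly separated), which by central symmetry gives $\|\xx_1-\xx_2\|_{\KK_0}\le T$, and a short triangle-inequality check shows that $\c=(\tau_1\xx_1+\tau_2\xx_2)/T$ lies in $A_1\cap A_2$. For the pairwise step in the general case I would show $A_i\cap A_j\ne\emptyset$ for every $i,j$: suppose the contrary, so $\|\xx_i-\xx_j\|_{\KK_0}>2T-\tau_i-\tau_j$, and pick $\mathbf{u}$ with $h_{\KK_0}(\mathbf{u})=1$ and $\langle\xx_j-\xx_i,\mathbf{u}\rangle=\|\xx_j-\xx_i\|_{\KK_0}$. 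Projecting $\K$ onto $\lin(\mathbf{u})$, each $\K_k$ yields an interval of length $2\tau_k$, and the NS hypothesis demands that these intervals cover the enclosing interval $[\min,\max]$. But the gap between the projections of $\K_i$ and $\K_j$ is strictly longer than $\sum_{k\ne i,j}2\tau_k$, so it cannot be filled by the remaining projections, a contradiction.

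To conclude, by Helly's theorem in $\Ee^d$ the intersection $\bigcap A_i$ is nonempty once every $(d+1)$-subfamily of $\{A_i\}$ has a common point, and this higher-order step is the main obstacle of the proof. Subfamilies of an NS family need not themselves be NS (for instance, in symmetric ``plus''-like configurations no two elements of $\K$ intersect even though the full family is NS), so the pairwise projection trick does not extend directly. I would address this by fixing a minimal empty subfamily $A_{i_1},\ldots,A_{i_k}$ with $k\le d+1$ and analyzing the Chebyshev-type centre $\c^*$ that minimizes $\max_j\|\c-\xx_{i_j}\|_{\KK_0}/(T-\tau_{i_j})$; the subgradient optimality at $\c^*$ yields functionals $\mathbf{u}_j$ with $h_{\KK_0}(\mathbf{u}_j)=1$ and weights $\mu_j>0$ satisfying $\sum\mu_j\mathbf{u}_j=\mathbf{o}$. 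Combining this force balance with the projection/length estimate from the pairwise step while accounting for the ``slack'' $T-\sum_j\tau_{i_j}$ contributed by the omitted members of $\K$ should produce the required contradiction with the global NS length bound $2T\,h_{\KK_0}(\mathbf{u})$.
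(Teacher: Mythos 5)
Your reduction to $\bigcap_{i=1}^n A_i\neq\emptyset$ with $A_i=\xx_i+(T-\tau_i)\KK_0$ is correct, and your pairwise step is sound: projecting along a norming functional $\mathbf{u}$ (with $h_{\KK_0}(\mathbf{u})=1$) turns each $\K_k$ into an interval of length $2\tau_k$, and if $A_i\cap A_j=\emptyset$ the gap between the images of $\K_i$ and $\K_j$ has length $>2\sum_{k\neq i,j}\tau_k$, which the remaining intervals cannot fill, contradicting that the images must form a single interval. The problem is the Helly step, and you have correctly identified it yourself: you need every $(d+1)$-subfamily of $\{A_i\}$ to have a common point, subfamilies of an NS family need not be NS, and your proposed fix (Chebyshev-type centre, subgradient functionals $\mathbf{u}_j$ with $\sum\mu_j\mathbf{u}_j=\mathbf{o}$, and a ``force balance'' against a length bound) is a sketch, not an argument. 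No estimate is written down, the role of the omitted members' slack is not quantified, and it is far from clear that the balance gives the strict inequality you need. As stated, this is a genuine gap.

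The route the paper takes avoids Helly entirely and is much shorter: it exhibits the centre explicitly. Take $\c=\frac{\sum\tau_i\xx_i}{T}$. For \emph{every} line $L$ through the origin, the $\K$-images under $\proj_L$ are intervals $[a_k-\tau_k h, a_k+\tau_k h]$ (with $a_k=\proj_L(\xx_k)$ and $h$ the half-width of $\proj_L(\KK_0)$), and non-separability forces their union to be a single interval; the one-dimensional Goodman--Goodman lemma then says the interval of half-length $Th$ centred at $\proj_L(\c)=\frac{\sum\tau_k a_k}{T}$ covers that union. This holds for every $L$, hence $h_{\bigcup\K}\leq h_{\c+T\KK_0}$ everywhere and $\bigcup\K\subseteq \c+T\KK_0$. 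Your pairwise step is exactly this projection argument applied in one carefully chosen direction; applying it to all directions with the explicit centre settles the theorem directly, whereas applying Helly discards the very candidate centre that makes the problem tractable. If you want to salvage your route, the cleanest way is to observe that your $n=2$ computation already suggests $\c$ as a common point of all the $A_i$, and then to prove $\c\in A_i$ by the projection argument — which is the paper's proof.
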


Next, recall the following definition from \cite{BezLan16}. Let $\K := \{ \xx_i + \tau_i \KK \ |\  \xx_i\in \Ee^d, \tau_i>0, i=1,2,\ldots, n\}$ be a family of positive homothetic copies of the convex body $\KK$ in $\Ee^d$ and let $0\le k\le d-1$. We say that $\K$ is a {\it $k$-impassable arrangement}, in short, a {\it $k$-IP-family} if every $k$-dimensional affine subspace of $\Ee^d$ intersecting $\conv \left( \bigcup \K \right)$ intersects an element of $\K$. Let $\lambda_k(\K) > 0$ denote the smallest positive value $\lambda$ such that some translate of $\lambda \left( \sum\limits_{i=1}^n \tau_i \right) \KK$ covers $\bigcup \K$, where $\K$ is an arbitrary $k$-IP-family. A $(d-1)$-IP-family is simply called an NS-family and in that case $\lambda_{d-1}(\K)=\lambda(\K)$. In order to state the main result of  \cite{BezLan16} on $k$-IP-families of positive homothetic convex bodies in $\Ee^d$ we need the following definitions and statement from Section 3.2 of \cite{Sch14}. Let $\KK_1'$ and $\KK_2'$ be nonempty, compact, convex sets in $\Ee^d, d\ge 2$. We say that $\KK_2'$ {\it slides freely inside} $\KK_1'$ if to each boundary point $\xx$ of $\KK_1'$ there exists a translation vector $\y\in\Ee^d$ such that $\xx\in\y+\KK_2'\subseteq\KK_1'$.
Furthermore, we say that $\KK_2'$ is a {\it summand} of $\KK_1'$ if there exists a nonempty, compact, convex set $\KK'$ in $\Ee^d$ such that $\KK_2'+\KK'=\KK_1'$. It is not hard to see that $\KK_2'$ is a summand of $\KK_1'$ if and only if $\KK_2'$ slides freely inside $\KK_1'$ (see \cite[Theorem 3.2.2.]{Sch14}). Now, the main result of  \cite{BezLan16} on $k$-IP-families reads as follows. 

\begin{thm}\label{k-IP-family}
Let $\KK$ be a $d$-dimensional convex body and $\K:=\{ \KK_i := \xx_i + \tau_i \KK\ |\  \xx_i\in \Ee^d, \tau_i>0, i=1,2,\ldots, n\}$
be a $k$-IP family of positive homothetic copies of $\KK$ in $\Ee^d$, where $0 \leq k \leq d-2$. Then $\conv \left(\bigcup \K \right)$ slides freely in $\left( \sum_{i=1}^n \tau_i \right) \KK$ (i.e., $\conv \left(\bigcup \K \right)$ is a summand of $\left( \sum_{i=1}^n \tau_i \right) \KK$) and therefore $\lambda_k(\K) \leq 1$. 
\end{thm}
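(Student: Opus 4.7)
The plan is to show that $C := \conv(\bigcup \K)$ is a summand of $\tau\KK$, where $\tau := \sum_{i=1}^n\tau_i$; by the equivalence of ``summand'' and ``sliding freely inside'' recalled in the excerpt, this yields the theorem and in particular $\lambda_k(\K)\leq 1$. My strategy is to reduce the claim to a planar base case via three observations.

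First, a $k$-IP family with $k\leq d-2$ is automatically $(d-2)$-IP, since through any point of $F\cap C$ (for $F$ a $(d-2)$-flat meeting $C$) one can draw a $k$-flat contained in $F$, which hits some $\KK_i$ by hypothesis. Second, for every $2$-dimensional linear subspace $V\subseteq\Ee^d$, the $(d-2)$-flats parallel to $V^\perp$ that meet $C$ are in bijection with points of $\proj_V(C)$, and they meet $\KK_i$ iff they project into $\proj_V(\KK_i)$; hence $\K$ is $(d-2)$-IP if and only if $\proj_V(C)=\bigcup_i\proj_V(\KK_i)$ for every $2$-plane $V$, i.e., the planar family $\{\proj_V(\KK_i)\}$ of positive homothets of $\proj_V(\KK)$ is $0$-IP in $V$. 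Third, $A$ is a summand of $B$ in $\Ee^d$ if and only if $\proj_V(A)$ is a summand of $\proj_V(B)$ in every $2$-plane $V$: the equivalence of ``$A$ summand of $B$'' with sublinearity of $h_B-h_A$ reduces summand-detection to subadditivity of $h_B-h_A$, and the inequality $(h_B-h_A)(\uu+\mathbf{v})\leq(h_B-h_A)(\uu)+(h_B-h_A)(\mathbf{v})$ involves only the two vectors $\uu,\mathbf{v}$ and hence only the $2$-plane they span, on which $h_B$ and $h_A$ restrict to the support functions of $\proj_V(B)$ and $\proj_V(A)$.

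These three steps reduce the theorem to the planar base case: if $\{\KK_i'\}_{i=1}^n$ are positive homothets of a planar convex body $\KK'$ with $\bigcup_i\KK_i'=C'$ convex, then $C'$ is a summand of $\tau'\KK'$ in $\Ee^2$, where $\tau':=\sum_i\tau_i$. For this I would invoke the classical plane Minkowski criterion (cf.\ \cite{Sch14}): $A$ is a summand of $B$ in $\Ee^2$ iff the surface area measures satisfy $S_A\leq S_B$ on $\Sph^1$. Since $\bd C'\subseteq\bigcup_i\bd\KK_i'$, and at every $\xx\in\bd C'\cap\bd\KK_i'$ the outer normal cone of $C'$ is contained in that of $\KK_i'$ (because $\KK_i'\subseteq C'$), subadditivity of $\mathcal{H}^1$ yields
\[
S_{C'}(U)\leq\sum_i S_{\KK_i'}(U)=\tau' S_{\KK'}(U)=S_{\tau'\KK'}(U)
\]
for every Borel $U\subseteq\Sph^1$, which is exactly the required comparison.

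The main obstacle will be the clean execution of the planar base case, which relies on the Minkowski existence and uniqueness theorem for surface area measures in $\Ee^2$, together with the projective-detection step; both depend on classical but not entirely trivial facts about support functions and surface area measures.
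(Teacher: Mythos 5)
Your proposal is correct, and it follows a genuinely different route from the machinery used in the paper. The paper does not actually reprove Theorem~\ref{k-IP-family} (it is quoted from \cite{BezLan16}), but the companion polytopal result Theorem~\ref{weak Bezdek-Langi} is proved here in two steps: showing that every edge of $\conv\left(\bigcup\mathcal{P}\right)$ lies in $\bigcup\mathcal{P}$ (Lemma~\ref{covering an edge by edges}), then invoking Schneider's edge-containment summand criterion for polytopes (Lemma~\ref{Schneider Lemma}, i.e., Theorem 3.2.11 of \cite{Sch14}). You instead reduce everything to the plane: you observe that $k$-IP for $k\leq d-2$ forces $(d-2)$-IP; you recast $(d-2)$-IP as the condition that for every $2$-plane $V$ the projections $\{\proj_V \KK_i\}$ form a $0$-IP planar family, i.e.\ $\proj_V\left(\conv\bigcup\K\right)=\bigcup_i\proj_V(\KK_i)$; and you detect ``summand'' via subadditivity of $h_B-h_A$, a condition that is tested on pairs of vectors and hence on $2$-planes, where it restricts to the planar support function difference. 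The planar base case is then settled through surface area measures: $\bd\, C'\subseteq\bigcup_i\bd\,\KK_i'$ together with normal-cone monotonicity implies that the reverse spherical image of $C'$ over any Borel $U\subseteq\Sph^1$ is contained in the union of those of the $\KK_i'$, giving $S_{C'}\leq\left(\sum_i\tau_i\right)S_{\KK'}$, which in $\Ee^2$ (where the first area measure is Minkowski-additive and determines the body up to translation) is exactly the summand condition. The two sets of tools are different, and your route has a genuine advantage: it handles arbitrary convex bodies directly, as Theorem~\ref{k-IP-family} requires, whereas Lemma~\ref{Schneider Lemma} is intrinsically polytopal, which is why the paper restricts to polytopes in Theorem~\ref{weak Bezdek-Langi}. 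One detail you should spell out when writing this up is that Minkowski's existence/uniqueness theorem in $\Ee^2$ must be used in its extended form allowing degenerate bodies, since $S_{\left(\sum_i\tau_i\right)\KK'}-S_{C'}$ may be concentrated on a pair of antipodal points or vanish; this is routine but worth a sentence.
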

It was noted in \cite{BezLan16} that for strictly convex bodies one can do more. Namely, if $\KK$ is a strictly convex body in $\Ee^d$ and $\K:=\{ \KK_i := \xx_i + \tau_i \KK\ |\  \xx_i\in \Ee^d, \tau_i>0, i=1,2,\ldots, n\}$
is a $k$-IP family of positive homothetic copies of $\KK$ in $\Ee^d$, where $0 \leq k \leq d-2$, then $\bigcup \K=\xx_{i^*} + \tau_{i^*} \KK=\KK_{i^*}$ for $\tau_{i^*}=\max \{\tau_i\ |\ i=1,2,\ldots, n\}$.

It is natural to say that an arrangement of (finitely many or infinitely many) convex bodies is {\it separable} in $\Ee^d$ if there exists a hyperplane that strictly separates some members of the arrangement from all the other members of the arrangement in $\Ee^d$. If an arrangement of convex bodies is not separable in $\Ee^d$, then we say that it is a {\it non-separable arrangement}, in short, an {\it NS-arrangement}. Let $\mathcal{L}^d$ denote the set of full-dimensional lattices in $\Ee^d$, i.e., discrete subgroups of $\Ee^d$ of full rank. Every lattice $L\in\mathcal{L}^d$ can be written as $L= A\Z^d$ for some invertible matrix $A\in {\rm GL}_d(\Re)$. Given a subset $S\subseteq\Ee^d$ and a lattice $L\in\mathcal{L}^d$, we call $S+L:=\cup_{\mathbf{z}\in L}(S+\mathbf{z})$ a {\it lattice arrangement} of $S$ in $\Ee^d$. If $S+L$ is an NS-arrangement, the we call it an {\it NS-lattice arrangement}. G. Fejes T\'oth (\cite{FeMa74}) asked the fundamental question on finding the smallest density of NS-lattice arrangements of balls in $\Ee^d$. More generally, L. Fejes T\'oth and E. Makai Jr. \cite{FeMa74} asked for the infimum of the densities of thinnest NS-lattice arrangements of convex bodies in $\Ee^d$. They proved the following theorem in \cite{FeMa74}. 
\begin{thm}[L. Fejes T\'oth and Makai Jr., 1974]\label{FTL-Makai}
The density of any NS-lattice arrangement of an arbitrary convex domain is at least $\frac{3}{8}$ in $\Ee^2$. Equality holds only if the convex domain is a triangle spanned by one vertex and the midpoints of the opposite sides of a basic parallelogram.
\end{thm}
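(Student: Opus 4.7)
The plan is to apply an affine transformation sending $L$ to $\mathbb{Z}^2$ -- preserving both non-separability and the density $\area(K)/\det(L)$ -- reducing the theorem to showing that any convex body $K \subseteq \Ee^2$ with $K + \mathbb{Z}^2$ non-separable satisfies $\area(K) \geq 3/8$, together with the claimed equality case. Non-separability is first reformulated as a family of width inequalities: projecting $\mathbb{Z}^2$ onto the line perpendicular to a primitive vector $\mathbf{v} \in \mathbb{Z}^2$ yields a one-dimensional sublattice of spacing $1/|\mathbf{v}|$, so non-separability is equivalent to $w_{\mathbf{v}^\perp/|\mathbf{v}|}(K) \geq 1/|\mathbf{v}|$ for every primitive $\mathbf{v} \in \mathbb{Z}^2$.

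Next, for an area-minimizing NS body $K$ (produced via Blaschke compactness after truncating to widths in a bounded range), I would argue the existence of a basis $\{\mathbf{a}',\mathbf{b}'\}$ of $\mathbb{Z}^2$ such that the three widths associated with $\mathbf{a}', \mathbf{b}', \mathbf{a}'+\mathbf{b}'$ are simultaneously tight: a variational argument shows that if fewer constraints were active at the minimizer, a local affine perturbation would strictly decrease the area without violating non-separability. A further linear change of variables sending this basis to the standard basis of $\mathbb{Z}^2$ turns the three tight constraints into $w_x(K) = 1$, $w_y(K) = 1$, and $w_{(-1,1)/\sqrt{2}}(K) = 1/\sqrt{2}$. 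After a suitable translation, $K$ is inscribed in the hexagon
\[
H = [0,1]^2 \cap \{(x,y) : -1/2 \leq y - x \leq 1/2\}
\]
with vertices $(0,0),(1/2,0),(1,1/2),(1,1),(1/2,1),(0,1/2)$, and $K$ touches each of the six sides of $H$.

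The heart of the proof is then the convex-geometric lemma: any convex body inscribed in $H$ and touching all six of its sides has area at least $3/8$, with equality only for the two triangles $T_1 := \conv\{(0,0),(1,1/2),(1/2,1)\}$ and $T_2 := \conv\{(1,1),(1/2,0),(0,1/2)\}$. I would establish this by a perturbation argument: since the area is a linear functional in the location of any single extreme point, any extreme point of a minimizer lying in the relative interior of some side of $H$ can be slid along that side to strictly decrease $\area(K)$ while preserving the six-side-touching condition, until it reaches a vertex of $H$. Iterating, every extreme point of a minimizer may be assumed to lie at a vertex of $H$; a combinatorial check then shows that three such extreme points cover all six sides iff they form a triple of alternating hexagon vertices, yielding $T_1$ or $T_2$, each of area $3/8$ by a direct shoelace computation.

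Undoing the linear and affine transformations, $T_1$ and $T_2$ correspond precisely to triangles spanned by one vertex of a fundamental parallelogram of $L$ and the midpoints of the two sides opposite that vertex -- matching the equality characterization in the theorem. I expect the main obstacle to be the existence claim for a basis making all three widths simultaneously tight in the minimizer: establishing this requires a delicate variational argument that exploits the full infinite family of NS constraints to rule out minimizers whose tight widths come from a non-basis triple of primitive vectors. The rigidity step in the convex-geometric lemma, namely verifying that the sliding perturbation always strictly decreases area except at $T_1$ and $T_2$, is secondary but also requires some case analysis.
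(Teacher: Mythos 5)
The paper states Theorem~\ref{FTL-Makai} only as a quoted result, citing \cite{FeMa74}, and gives no proof of it; there is therefore no proof in the paper against which to compare your proposal.

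Evaluating your sketch on its own terms: the reduction to $L=\mathbb{Z}^2$, the reformulation of non-separability as the width inequalities $w_{\mathbf{v}^\perp/|\mathbf{v}|}(K)\geq 1/|\mathbf{v}|$ over primitive $\mathbf{v}\in\mathbb{Z}^2$, and the final hexagon lemma are all sound ideas. The genuine gap is the one you yourself flag as the main obstacle: the claim that an area-minimizing non-separable $K$ admits a basis $\{\mathbf{a}',\mathbf{b}'\}$ of $\mathbb{Z}^2$ whose three associated width constraints (for $\mathbf{a}'$, $\mathbf{b}'$, and $\mathbf{a}'+\mathbf{b}'$) are simultaneously tight. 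Nothing in the variational heuristic forces the active constraints to come from a basis triple rather than, say, $(1,0)$, $(0,1)$, $(1,2)$, nor even forces three of them to be active at all; and because the constraint family is infinite and the space of convex bodies carries no finite-dimensional smooth structure, the Lagrange-multiplier style reasoning ``fewer active constraints would permit shrinking'' is not automatic. This is precisely where the lattice geometry (a reduced-basis or shortest-vector argument in $\mathbb{Z}^2$, or equivalently in the dual lattice in the norm of the difference body) must be invoked, and your sketch does not supply it. Until that is done, the constant $3/8$ and the equality characterization are unproved. Two secondary issues: the compactness step needs an a priori diameter bound on near-minimizers (Blaschke selection after an unmotivated truncation does not preclude a minimizing sequence escaping to infinity while all the prescribed widths stay bounded below), and in the hexagon lemma you should first replace $K$ by the convex hull of its six touch points, at which stage the area really is affine in each touch point and your sliding argument correctly reduces the problem to the two alternating-vertex triangles of the hexagon, each of area $3/8$.
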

In 1978, Makai Jr. \cite{Ma78} established a remarkable connection between the volume product of a convex body, its maximal lattice packing density and the minimal density of an NS-lattice arrangement of its polar body. Consequently, he formulated the following conjecture. If $\mathbf{K}$ be a convex body in $\Ee^d$, then $V_{d}(\mathbf{K})$ denotes the $d$-dimensional volume (i.e., Lebesgue measure) of $\mathbf{K}$. Moreover, for $L\in\mathcal{L}^d$ we denote the determinant of $L$, i.e., the $d$-dimensional volume of a fundamental domain of $L$, by ${\rm det}(L)$.
\begin{conj}[Makai Jr., 1978]\label{Makai}
Let $\mathbf{K}$ be a convex body in $\Ee^d$, $d>1$ and let $L\in\mathcal{L}^d$ such that $\mathbf{K}+L$ is an NS-lattice arrangement. Then $\frac{V_d(\mathbf{K})}{{\rm det}(L)}\geq \frac{d+1}{2^dd!}$.
\end{conj}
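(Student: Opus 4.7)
Since this conjecture has been open since 1978 for $d\geq 3$, I will outline the natural dual reformulation, exhibit the conjectured extremum, and identify the sharp Mahler-type inequality that remains the principal obstacle. The plan is to reformulate non-separability as a width-vs-spacing condition. Because $\mathbf{K}+L$ is a lattice arrangement, $\conv(\mathbf{K}+L)=\Ee^d$, so non-separability is equivalent to the requirement that every hyperplane of $\Ee^d$ meets some translate $\mathbf{K}+\mathbf{z}$, $\mathbf{z}\in L$. Parametrizing hyperplanes by their unit normal $\mathbf{u}\in\Sd$, this reduces to: for every $\mathbf{u}$ orthogonal to a lattice hyperplane of $L$, the spacing $\gamma_{\mathbf{u}}(L)=\det(L)/\det(L\cap\mathbf{u}^\perp)$ of the one-dimensional projection lattice $\pi_{\mathbf{u}}(L)$ is at most the width $w_{\mathbf{K}}(\mathbf{u})=h_{\mathbf{K}}(\mathbf{u})+h_{\mathbf{K}}(-\mathbf{u})$ of $\mathbf{K}$. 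Passing to the dual lattice $L^*=\{\mathbf{y}:\langle\mathbf{y},\mathbf{z}\rangle\in\Z\text{ for all }\mathbf{z}\in L\}$, for every primitive $\mathbf{v}\in L^*$ one has $\gamma_{\mathbf{v}/\|\mathbf{v}\|}(L)=1/\|\mathbf{v}\|$, so the NS-condition rewrites as $h_{\mathbf{K}-\mathbf{K}}(\mathbf{v})\geq 1$ for every nonzero $\mathbf{v}\in L^*$; equivalently, the interior of the symmetric polar body $(\mathbf{K}-\mathbf{K})^\circ$ contains no nonzero point of $L^*$.

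Next I would apply Minkowski's first fundamental theorem to the symmetric convex body $(\mathbf{K}-\mathbf{K})^\circ$ and the lattice $L^*$ (of determinant $1/\det(L)$), obtaining
\[
V_d\bigl((\mathbf{K}-\mathbf{K})^\circ\bigr)\ \leq\ 2^d\det(L^*)\ =\ \frac{2^d}{\det(L)},
\]
hence $V_d(\mathbf{K})/\det(L)\geq 2^{-d}V_d(\mathbf{K})V_d((\mathbf{K}-\mathbf{K})^\circ)$. The conjecture is thus reduced to the sharp Mahler-type inequality
\[
V_d(\mathbf{K})\cdot V_d\bigl((\mathbf{K}-\mathbf{K})^\circ\bigr)\ \geq\ \frac{d+1}{d!},
\]
conjecturally attained only by simplices. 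To verify tightness, I would exhibit the extremizer: $L=\Z^d$ and $\mathbf{K}$ the simplex with vertices $\o$ and $\mathbf{v}_i=\tfrac12(\e_i+\mathbf{1})$, $i=1,\ldots,d$, where $\mathbf{1}=\sum_{j=1}^d\e_j$. The computation $\det\bigl(\tfrac12 I+\tfrac12 J\bigr)=(d+1)/2^d$ gives $V_d(\mathbf{K})=(d+1)/(2^dd!)$, and the NS-condition $h_{\mathbf{K}-\mathbf{K}}(\mathbf{v})\geq 1$ is readily checked for every primitive $\mathbf{v}\in\Z^d$, with equality in each coordinate direction. This is the $d$-dimensional analog of the planar extremum of Theorem~\ref{FTL-Makai}.

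The principal obstacle is the sharp inequality in the previous step. The naive chain --- symmetric Mahler applied to $\mathbf{K}-\mathbf{K}$ combined with Rogers--Shephard $V_d(\mathbf{K}-\mathbf{K})\leq\binom{2d}{d}V_d(\mathbf{K})$ --- produces only $V_d(\mathbf{K})V_d((\mathbf{K}-\mathbf{K})^\circ)\geq 4^d/(d!\binom{2d}{d})$, which is strictly weaker than $(d+1)/d!$ already in $d=2$ (it yields $4/3$ instead of the required $3/2$), because the equality cases of Minkowski (parallelotopes), Rogers--Shephard (simplices), and Mahler (cubes/crosspolytopes) are mutually incompatible. A sharp proof must therefore treat the pair $(\mathbf{K},L)$ jointly rather than factoring through $\mathbf{K}-\mathbf{K}$. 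The most natural candidate is a Steiner-type symmetrization of the pair that preserves non-separability, respects the lattice structure, and monotonically decreases $V_d(\mathbf{K})/\det(L)$, eventually driving $\mathbf{K}$ to the extremal simplex; constructing such a joint symmetrization --- or alternatively, invoking Minkowski's theorem on successive minima of $L^*$ with respect to $(\mathbf{K}-\mathbf{K})^\circ$ in a way that exploits more than just the first minimum --- appears to be the essential missing ingredient.
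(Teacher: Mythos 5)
This item is a \emph{conjecture}, not a theorem: the paper explicitly records that it is open for all $d>2$ (the $d=2$ case being Theorem~\ref{FTL-Makai}), so there is no proof in the paper against which to compare. You correctly recognize the open status and refrain from claiming a proof, which is the right call.

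Your reduction is accurate and, in fact, is essentially Makai's own reformulation in \cite{Ma78}, which the paper alludes to when it mentions "a remarkable connection between the volume product of a convex body, its maximal lattice packing density and the minimal density of an NS-lattice arrangement of its polar body" and points to \cite{GoSc} for the connection to Mahler's conjecture. Concretely: non-separability of $\mathbf{K}+L$ is equivalent to $h_{\mathbf{K}-\mathbf{K}}(\mathbf{v})\geq 1$ for every nonzero $\mathbf{v}\in L^*$ (hyperplanes with irrational normal always meet the arrangement, and for a primitive $\mathbf{v}\in L^*$ the projected intervals of width $h_{\mathbf{K}-\mathbf{K}}(\mathbf{v}/\|\mathbf{v}\|)$ and spacing $1/\|\mathbf{v}\|$ cover $\Re$ iff $h_{\mathbf{K}-\mathbf{K}}(\mathbf{v})\geq 1$); this is the same as $\inter\bigl((\mathbf{K}-\mathbf{K})^\circ\bigr)\cap L^*=\{\mathbf{o}\}$, and Minkowski's first theorem applied to $(\mathbf{K}-\mathbf{K})^\circ$ and $L^*$ gives $V_d\bigl((\mathbf{K}-\mathbf{K})^\circ\bigr)\leq 2^d/\det(L)$, reducing the conjecture to the sharp Mahler-type bound $V_d(\mathbf{K})\,V_d\bigl((\mathbf{K}-\mathbf{K})^\circ\bigr)\geq (d+1)/d!$. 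That volume-product inequality for not-necessarily-symmetric bodies, with simplices conjecturally extremal, is precisely the form in which Makai stated the obstruction, and it remains open for $d\geq 3$. Your conjectured extremizer matches the $d=2$ equality case of Theorem~\ref{FTL-Makai}, and your diagnosis that a naive combination of Minkowski, Rogers--Shephard, and symmetric Mahler loses a constant because the equality cases are incompatible is a correct and standard observation.

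In short: your proposal is a faithful account of the known reduction and of why the conjecture is hard, rather than a proof; since the paper offers no proof either (and cannot), there is nothing to fault. The one small caveat worth being explicit about is that the passage through Minkowski's first theorem gives only one implication (Mahler-type inequality $\Rightarrow$ Conjecture~\ref{Makai}); the converse equivalence, which Makai also established, requires a separate argument choosing $L^*$ to be a critical lattice for $(\mathbf{K}-\mathbf{K})^\circ$, and you do not need it for the direction you are outlining.
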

Note that $\frac{V_d(\mathbf{K})}{{\rm det}(L)}$ is the density of the NS-lattice arrangement $\mathbf{K}+L$ in $\Ee^d$ (i.e., the fraction of space covered by $L$ translates of $\mathbf{K}$) and so, Theorem~\ref{FTL-Makai} proves Conjecture~\ref{Makai} for $d=2$. (For more details see \cite{FeMa74} and \cite{Ma78}.) On the other hand, Conjecture~\ref{Makai} is open for all $d>2$. For several partial results and further connections to Mahler's conjecture on volume product and covering minima of Kannan and Lov\'asz \cite{KaLo} we refer the interested reader to \cite{GoSc}.

Finally, we mention that NS-arrangements have been studied in spherical spaces ${\mathbb S}^{d-1}:=\{\mathbf{u}\in\Ee^d | \|\mathbf{u}\|=1\}$ as well. A closed cap, in short a {\it cap}, of spherical radius $\alpha$, for $0\leq \alpha\leq \pi$, is the set of points with spherical distance at most $\alpha$ from a given point in ${\mathbb S}^{d-1}\subset\Ee^d$. A {\it great sphere} of ${\mathbb S}^{d-1}$ is an intersection of ${\mathbb S}^{d-1}$ with a hyperplane of $\Ee^d$ passing through the origin $\mathbf{o}\in\Ee^d$. Following the terminology of Polyanskii \cite{Po21}, we say that a great sphere {\it avoids} a collection of caps in ${\mathbb S}^{d-1}$ if it does not intersect any cap of the collection. Finally, we say that a finite collection of caps is {\it non-separable}, i.e., it is an {\it NS-family}, if it does not have a great sphere that avoids the caps such that on both sides of it there is at least one cap. Based on these concepts Polyanskii \cite{Po21} proved the following extension of the theorem of Goodman and Goodman \cite{GG45} to spherical spaces.

\begin{thm}[Polyanskii, 2021]\label{Polyanskii}
Let $\mathcal{F}$ be an NS-family of caps of spherical radii $\alpha_1,\dots ,\alpha_n$ in ${\mathbb S}^{d-1}$, $d\geq 2$. If $\alpha_1+\dots+\alpha_n<\frac{\pi}{2}$, then $\mathcal{F}$ can be covered by a cap of radius $\alpha_1+\dots+\alpha_n$ in ${\mathbb S}^{d-1}$.
\end{thm}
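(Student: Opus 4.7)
The plan is to prove Theorem~\ref{Polyanskii} by induction on $n$, with the base case $n=1$ being trivial. For the inductive step, given an NS-family $\F = \{K_1, \ldots, K_n\}$ of caps in $\Sph^{d-1}$ with centers $u_1, \ldots, u_n$ and spherical radii $\alpha_1, \ldots, \alpha_n$ satisfying $\alpha := \sum_{i} \alpha_i < \pi/2$, I would seek two indices $i \neq j$ whose centers are at spherical distance at most $\alpha_i + \alpha_j$. Given such a pair, let $u^*$ denote the point on the minor geodesic arc from $u_i$ to $u_j$ at spherical distance $\alpha_j$ from $u_i$ (and hence $\alpha_i$ from $u_j$); the spherical triangle inequality then yields $K_i \cup K_j \subseteq K^*$, where $K^*$ is the cap of radius $\alpha_i + \alpha_j$ centered at $u^*$. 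Replacing $\{K_i, K_j\}$ by $\{K^*\}$ produces a family $\F'$ of $n-1$ caps whose spherical radii still sum to $\alpha$.

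The family $\F'$ inherits non-separability from $\F$: if some great sphere $G$ avoided every member of $\F'$ and had members on both of its sides, then because $K^* \supseteq K_i \cup K_j$, it would also avoid every cap in $\F$, with $K_i$ and $K_j$ lying on the side of $G$ containing $K^*$ and at least one $K_k$ (with $k \neq i, j$) on the opposite side; this contradicts the non-separability of $\F$. Applying the inductive hypothesis to $\F'$ then yields a cap of spherical radius $\alpha$ containing $\bigcup \F' \supseteq \bigcup \F$, completing the reduction.

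The main obstacle is establishing the existence of a pair with $d(u_i, u_j) \leq \alpha_i + \alpha_j$, where $d$ denotes spherical distance. The case $n = 2$ is immediate: if $d(u_1, u_2) > \alpha_1 + \alpha_2$, then since $\alpha_1 + \alpha_2 < \pi/2$, the open caps $\{w : d(w, u_1) < \pi/2 - \alpha_1\}$ and $\{w : d(w, -u_2) < \pi/2 - \alpha_2\}$ overlap, producing a pole $w$ of a great sphere that strictly separates $K_1$ from $K_2$. For $n \geq 3$, I would argue by contradiction, assuming all pairs to be strictly disjoint. Then, for every pair $(i, j)$ the ``lens'' $L_{ij} \subseteq \Sph^{d-1}$ of poles $w$ whose great sphere strictly separates $K_i$ from $K_j$ is non-empty, and the non-separability of $\F$ forces $L_{ij}$ to be covered by the union $\bigcup_k A_k$, where $A_k = \{w : |d(w, u_k) - \pi/2| \leq \alpha_k\}$ is the set of poles of great spheres meeting $K_k$. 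A measure-theoretic or covering argument, in which the bound $\sum_k \alpha_k < \pi/2$ controls the total size of the annuli $A_k$, should then derive a contradiction by showing that the lenses $L_{ij}$ span a region too large to be contained in $\bigcup_k A_k$.
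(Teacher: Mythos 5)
The paper does not contain a proof of Theorem~\ref{Polyanskii}; it is stated as a result cited from Polyanskii \cite{Po21}, so there is nothing in the paper to compare the proposal against. I therefore review the proposal on its own merits.

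The inductive merging strategy is structurally reasonable, but it has one minor defect and one serious gap. The minor defect: placing $u^*$ at distance $\alpha_j$ from $u_i$ ``(and hence $\alpha_i$ from $u_j$)'' is correct only when $d(u_i,u_j)=\alpha_i+\alpha_j$; when $d(u_i,u_j)<\alpha_j-\alpha_i$ the asserted distance from $u_j$ is wrong and the containment $K_j\subseteq K^*$ can fail. The fix (weighted midpoint, or handling the case $K_i$ nearly inside $K_j$ separately) is easy and does not affect the main line.

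The serious gap is the existence of a ``touching pair'' when $n\geq 3$, which you yourself flag as the main obstacle and leave unresolved. The measure-theoretic sketch does not obviously close it. Each annulus $A_k$ has $(d-1)$-measure at most roughly $2\alpha_k\,|\Sph^{d-2}|$, so $\sum_k |A_k| < \pi\,|\Sph^{d-2}|$; but $|\Sph^{d-1}|=\bigl(\sqrt{\pi}\,\Gamma(\tfrac{d-1}{2})/\Gamma(\tfrac d2)\bigr)\,|\Sph^{d-2}|$ is strictly less than $\pi\,|\Sph^{d-2}|$ as soon as $d\geq 3$, so the crude bound does not even give $\bigcup_k A_k\neq\Sph^{d-1}$, let alone the finer statement that some pole outside $\bigcup_k A_k$ actually separates two caps. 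Your observation that the lenses $L_{ij}$ must be covered by $\bigcup_k A_k$ is correct, but converting that into a quantitative contradiction requires a genuine idea (e.g.\ exploiting that $L_{ij}$ is itself a lens of controlled geometry, or restricting everything to a single great circle), and none is supplied. Until the touching-pair lemma is proved, the induction cannot start, so the argument as written does not establish the theorem. For reference, Polyanskii's published proof does not proceed by pairwise merging at all; it runs, in the spirit of Goodman--Goodman, by constructing an explicit candidate covering cap (an appropriately weighted spherical average of the $u_i$) and verifying the covering via a projection/support-function inequality, so your approach is genuinely different and its key lemma would need its own proof.
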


If the spherical caps in Theorem~\ref{Polyanskii} have pairwise disjoint interiors and their centers lie on a great circle in ${\mathbb S}^{d-1}$ (where by a great circle we mean the intersection of ${\mathbb S}^{d-1}$ with a $2$-dimensional linear subspace of $\Ee^d$), with consecutive caps along the great circle touching each other, then the estimate for the radius of the covering cap in Theorem~\ref{Polyanskii} is sharp. This naturally leads to the question of a corresponding stability statement.
\begin{prob}
Find a stability analogue of Theorem~\ref{Polyanskii}.
\end{prob}

In the remainder of the paper, we examine additional aspects of non-separability related to the results highlighted above and also strengthen the results in the special case where the convex bodies are convex polytopes. This is achieved by establishing {\it weakly non-separable} and {\it weakly $k$-impassable} analogues of the aforementioned results. We also address related stability results.

\section{A stability analogue of Theorem~\ref{centrally symmetric convex bodies}}

If the positive homothetic convex bodies in Theorem~\ref{centrally symmetric convex bodies} have pairwise disjoint interiors and their centers lie on a common line, with consecutive bodies along the line touching one another, then the estimate of Theorem~\ref{centrally symmetric convex bodies} is sharp. This observation naturally motivates the search for a corresponding stability statement. It will be useful to have the following concept. Let $\KK$ be an $\o$-symmetric convex body in $\Ee^d$ and let $X$ be a compact set in $\Ee^d$. Then the $\KK$-circumradius of $X$ is the smallest value of $\lambda>0$ such that a translate of $\lambda \KK$ contains $X$.

\begin{thm}\label{NS-stability}
Let $\KK$ be an $\o$-symmetric convex body in $\Ee^d$ with $C^2$-class boundary and strictly positive Gaussian curvature everywhere. Let $\tau_1, \tau_2, \ldots, \tau_n >0$. Then there are some positive constants $C:=C(\tau_1,\tau_2,\ldots,\tau_n, \KK)$ and $D:=D(\tau_1,\tau_2,\ldots,\tau_n, \KK)$ with the following property. If $\mathcal{F}:= \{ \p_i + \tau_i \KK \ |\  \p_i\in \Ee^d, \tau_i>0, i=1,2,\ldots, n\}$ is an NS-family whose $\KK$-circumradius is at least $\left(\sum_{i=1}^n \tau_i\right) - \varepsilon$ for some $\varepsilon$ with $0<\varepsilon \leq D$, then there is a straight line $L$ whose distance from every $\p_i$ is at most $C \varepsilon^{1/2}$. Furthermore, the exponent $1/2$ is the best possible for any fixed values of the $\tau_i>0, i=1,2,\ldots, n$. 
\end{thm}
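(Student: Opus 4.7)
Write $X := \conv(\bigcup \mathcal{F})$, $\Lambda := \sum_i \tau_i$, and for each $v\in\Sph^{d-1}$ sort the centres by $\pi_j(v):=\langle\p_j,v\rangle$ into an ordering $\sigma=\sigma(v)$. Setting
\[
  s_i(v) := (\tau_{\sigma(i)}+\tau_{\sigma(i+1)})\,h_\KK(v) - (\pi_{\sigma(i+1)}(v)-\pi_{\sigma(i)}(v)),
\]
the NS hypothesis is equivalent to $s_i(v)\ge 0$ for every $v$ and every consecutive pair, and a direct computation gives the identity $h_X(v)+h_X(-v) = 2\Lambda\, h_\KK(v) - \Sigma(v)$ with $\Sigma(v):=\sum_{i=1}^{n-1} s_i(v)\ge 0$. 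The assumption $\rho := (\KK\text{-circumradius of }X) \ge \Lambda-\varepsilon$ together with $h_X(v)+h_X(-v)\le 2\rho\,h_\KK(v)$ forces the lower bound $\Sigma(v)\ge 2\varepsilon\,h_\KK(v)$ for every $v$. The first task is to select a direction $u^*\in\Sph^{d-1}$ with a matching upper bound $\Sigma(u^*)\le C_0\varepsilon$. In the strictly extremal case $\rho=\Lambda$ the common line direction of the centres achieves $\Sigma(u^*)=0$, and for $\rho$ near $\Lambda$ the direction $u^*$ will play the role of an \emph{almost antipodal contact direction} of $X$ with the Chebyshev-optimal translate $t^*+\rho\KK$ of $\rho\KK$ containing $X$.

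\textbf{Core curvature estimate.} With such a $u^*$ in hand, set $q(u^*):=\nabla h_\KK(u^*)\in\partial\KK$, the unique point of $\partial\KK$ with outer normal $u^*$. The hypotheses on $\KK$ imply $h_\KK\in C^2(\Ee^d\setminus\{\o\})$ and that the Euclidean Hessian $H:=\nabla^2 h_\KK(u^*)$ restricted to $(u^*)^\perp$ is positive definite, with eigenvalues equal to the principal radii of curvature of $\partial\KK$ at $q(u^*)$; in particular $w^\top H w\ge c(\KK)>0$ uniformly for every unit $w\perp u^*$. Let $\xi_i$ denote the orthogonal projection onto $(u^*)^\perp$ of the ``step defect'' $(\p_{\sigma(i+1)}-\p_{\sigma(i)})-(\tau_{\sigma(i)}+\tau_{\sigma(i+1)})\,q(u^*)$. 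For each unit $w\perp u^*$ and small $\theta\in\Re$, a direct second-order expansion of $s_i$ along $v(\theta):=u^*\cos\theta+w\sin\theta$ yields
\[
  s_i(v(\theta)) = s_i(u^*) - \theta\,\langle\xi_i,w\rangle + \tfrac{\theta^2}{2}\bigl[(\tau_{\sigma(i)}+\tau_{\sigma(i+1)})\,w^\top H w - s_i(u^*)\bigr] + O(\theta^3).
\]
Choosing $\varepsilon\le D$ small enough that the $\theta^2$-coefficient is strictly positive and the minimizer $\theta=\langle\xi_i,w\rangle/\gamma_i$ of the resulting upward-opening parabola lies inside the interval on which the $v$-ordering remains $\sigma$, the NS requirement $s_i(v(\theta))\ge 0$ evaluated at that minimum becomes the discriminant bound $\langle\xi_i,w\rangle^2\le 2s_i(u^*)\bigl[(\tau_{\sigma(i)}+\tau_{\sigma(i+1)})w^\top H w-s_i(u^*)\bigr]$. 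Taking $w=\xi_i/\|\xi_i\|$ and using the uniform bound on $w^\top H w$, we obtain $\|\xi_i\|\le C_1\sqrt{s_i(u^*)}$ with $C_1$ depending only on $\KK$ and $\tau_1,\dots,\tau_n$.

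\textbf{Assembly and sharpness.} Let $L$ be the line through $\p_{\sigma(1)}$ with direction $q(u^*)$ and set $T_k:=\sum_{i<k}(\tau_{\sigma(i)}+\tau_{\sigma(i+1)})$. Telescoping
\[
  \p_{\sigma(k)}-\p_{\sigma(1)}-T_k\,q(u^*)=\sum_{i<k}\bigl[(\p_{\sigma(i+1)}-\p_{\sigma(i)})-(\tau_{\sigma(i)}+\tau_{\sigma(i+1)})q(u^*)\bigr]
\]
and splitting each summand into its $u^*$-part (of size $s_i(u^*)$) and $(u^*)^\perp$-part $\xi_i$ (of size $\le C_1\sqrt{s_i(u^*)}$), the triangle inequality and Cauchy--Schwarz give
\[
  \mathrm{dist}(\p_{\sigma(k)},L) \;\le\; \Sigma(u^*) + \sum_{i<k}\|\xi_i\| \;\le\; \Sigma(u^*) + C_1\sqrt{n-1}\,\sqrt{\Sigma(u^*)} \;\le\; C\,\varepsilon^{1/2}.
\]
For the optimality of the exponent $1/2$ I will take $n=3$ and $\tau_1=\tau_2=\tau_3=\tau$, place $\p_1,\p_3$ on a line so that the three bodies $\p_j+\tau\KK$ pairwise touch in the $\KK$-norm (hence form an NS family), then perturb $\p_2$ perpendicular to the line by $\delta$ while simultaneously shifting $\p_1,\p_3$ along the line to restore pairwise touching. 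A second-order computation of the $\KK$-circumradius of the resulting $X$ yields $\Lambda-\rho=\Theta(\delta^2)$, so $\mathrm{dist}(\p_2,L)=\delta=\Theta(\varepsilon^{1/2})$ matches the theorem. The \textbf{main technical obstacle} is the selection step: proving $\Sigma(u^*)\le C_0\varepsilon$ requires showing that as $\rho\to\Lambda$ the contact normals of $X$ with $\partial(t^*+\rho\KK)$ concentrate near an antipodal pair $\{u^*,-u^*\}$, either via a direct first-order optimality analysis of the Chebyshev centre or via a compactness--continuity argument in the space of NS-families with the given data $\tau_1,\dots,\tau_n$.
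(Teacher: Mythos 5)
Your approach is genuinely different from the paper's: the paper fixes the known covering translate $\p + (\sum\tau_i)\KK$ with $\p := (\sum\tau_i\p_i)/(\sum\tau_i)$, chooses coordinates so that a farthest point of $\bigcup\mathcal F$ from $\p$ sits at the origin and $\p$ lies on the first axis, runs a one-dimensional Goodman--Goodman count along that axis, and then converts ``$\KK_{i-1}$ meets $\KK_i$'' into a transverse bound by enclosing each homothet in a Euclidean ball of radius $1/\kappa$ (Schneider, Theorem~3.2.12). That coordinate choice produces, for free, the reference direction you are searching for; no separate optimization over $v\in\Sph^{d-1}$ is needed. Your second-order expansion of the support function and the resulting discriminant bound $\|\xi_i\|\lesssim\sqrt{s_i(u^*)}$ are the analogue of the paper's ball-containment argument, so the curvature step is in the right spirit.

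However, there are genuine gaps that prevent your argument from closing.

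First, the asserted equivalence ``NS $\iff$ $s_i(v)\ge 0$ for all $v$ and all consecutive pairs (sorted by $\langle\p_j,v\rangle$)'' is false. If one homothet is large enough that its projected interval contains the projections of its neighbours, the union of intervals can be connected even though two consecutive-by-centre intervals are disjoint; e.g.\ three intervals $[0,1]$, $[2,3]$, $[0.5,100]$ have connected union but the first two (consecutive by centre) do not overlap. The paper sidesteps this by sorting by \emph{left endpoint} and proving the needed inequalities from the total-length bound $x_i-\tau_i\le 2\sum_{j<i}\tau_j$, not from a pointwise overlap claim. Without $s_i(v)\ge 0$ for $v$ near $u^*$, your discriminant bound has no input.

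Second, the displayed inequality ``$\Sigma(v)\ge 2\varepsilon\,h_\KK(v)$'' is incorrect. From the identity $\Sigma(v)=2\Lambda h_\KK(v)-h_X(v)-h_X(-v)$ (valid only when the $\max$ and $\min$ are attained at the extreme centres, which again fails for nested homothets) and $h_X(v)+h_X(-v)\le 2\rho\,h_\KK(v)$ one gets $\Sigma(v)\ge 2(\Lambda-\rho)h_\KK(v)$, and $\Lambda-\rho\le\varepsilon$, so this only says $\Sigma(v)\ge(\text{something}\le 2\varepsilon h_\KK(v))$; it is a trivial lower bound in the wrong direction. What your argument actually requires is the \emph{upper} bound $\Sigma(u^*)\le C_0\varepsilon$ at some direction, which you yourself flag as the ``main technical obstacle'' and do not prove. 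This is not a peripheral obstacle: it is the crux of the proof, precisely the step that the paper's explicit coordinate choice replaces. Until that step is supplied, the proposal is incomplete.

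Two smaller points. In the assembly step you decompose the increments along $u^*$ and $(u^*)^\perp$, but the line $L$ has direction $q(u^*)=\nabla h_\KK(u^*)$, which is not parallel to $u^*$ unless $\KK$ is a ball at that point; the distance to $L$ must therefore be measured in the hyperplane orthogonal to $q(u^*)$, and the bookkeeping needs to be redone accordingly. In the sharpness sketch, three bodies with collinear centres cannot ``pairwise touch''; only consecutive ones touch. The paper's construction with $n$ bodies, consecutive touchings, and a $\delta$-perturbation of one centre is the clean version, and it gives $\Lambda-\rho=\Theta(\delta^2)$ as you intend, so the exponent claim is fine once the wording is fixed, but the construction should be given for arbitrary $n$ and $\tau_i$ as the statement requires.
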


\begin{proof}
Consider an NS-family $\mathcal{F}:= \{ \KK_i := \p_i + \tau_i \KK \ |\  \p_i\in \Ee^d, \tau_i>0, i=1,2,\ldots, n\}$. Assume that if a translate of $\bar{\tau} \KK$ covers $\bigcup \mathcal{F}$, then $\bar{\tau} \geq \left(\sum_{i=1}^n \tau_i\right) - \varepsilon$.

It is known (see \cite{BezLan16}) that the homothetic copy $\p + \tau \KK$, with $\tau := \sum_{i=1}^n \tau_i $ and center $\p:= \frac{\sum_{i=1}^n \tau_i \p_i}{ \sum_{i=1}^n \tau_i}$, contains $\bigcup \mathcal{F}$. Hence, according to our assumption, at least one point of $\bigcup \mathcal{F}$  is a boundary or an exterior point of $\p + (\tau-\varepsilon) \KK$. Let us choose a coordinate system in which $\o$ is a farthest point of $\bigcup \mathcal{F}$ from $\p$ with $\p$ sitting on the positive half of the first coordinate axis. Since the statement remains unchanged under any non-degenerated linear transformation of $\KK$, we may also assume that the projection of $\KK$ onto the first coordinate axis is the interval $[-1,1]$ and it coincides with the intersection of $\KK$ with the axis, and, in addition, that its projection onto the orthogonal complement of the axis does not contain any point farther than one from $\o$.

Observe that $\p=(x, \mathbf{0})$ for some $x > 0$ with $\mathbf{0}:=(0, \dots , 0)\in \Eu^{d-1}$ and let $(x_i,\mathbf{y}_i):=\p_i$ with $\mathbf{y}_i\in  \Eu^{d-1}$. Note that since $\mathcal{F}$ is an NS-family, its projection onto the first coordinate axis is a closed segment. Let us relabel the elements such that if $i \leq j$, then the left endpoint of the projection of $\p_i + \tau_i \KK$ is not greater than that of $\p_j + \tau_j \KK$.
Then for any $1\leq i\leq n$, we have $x_i \leq 2 \tau_1 + 2\tau_2 + \ldots + 2 \tau_{i-1} + \tau_i$ with $\tau_0:=0$.
This yields that
\[
x = \frac{\sum_{i=1}^n \tau_i x_i}{ \sum_{i=1}^n \tau_i} \leq \frac{\sum_{i=1}^n (2 \tau_1 + 2\tau_2 + \ldots + 2 \tau_{i-1} + \tau_i) \tau_i}{ \sum_{i=1}^n \tau_i}
= \frac{(\sum_{i=1}^n \tau_i )^2}{\sum_{i=1}^n \tau_i} = \sum_{i=1}^n \tau_i =\tau.
\]
On the other hand, our assumptions yield that $x \geq \left(\sum_{i=1}^n \tau_i\right) - \varepsilon=\tau - \varepsilon$. This implies that for every value of $i$, we have
\[
x_i \geq 2 \tau_1 + 2\tau_2 + \ldots + 2 \tau_{i-1} + \tau_i - \frac{\tau}{\tau_i} \varepsilon.
\]
Let $C'= \max \{ \frac{\tau}{\tau_i} \ | \ i=1,2,\ldots, n\}$. Then the above inequality yields that $\tau_{i-1}+\tau_{i}- C'\varepsilon \leq x_i-x_{i-1} \leq \tau_{i-1}+\tau_i+C'\varepsilon$. By our assumptions, $||\mathbf{y}_i|| \leq \tau$ for all values of $i$. This implies in a straightforward way that there exists $D:=D(\tau_1,\tau_2,\ldots,\tau_n, \KK)>0$ such that if $0<\varepsilon \leq D$ and $\p_{i-1} + \tau_{i-1} \KK$ and $\p_{i} + \tau_{i} \KK$ are disjoint, then there exists a hyperplane that strictly separates them and is disjoint from any other element of $\mathcal{F}$. Thus, the non-separability of $\mathcal{F}$ yields that $\p_{i-1} + \tau_{i-1} \KK$ and $\p_{i} + \tau_{i} \KK$ intersect for all values of $i$ whenever $0<\varepsilon \leq D$. On the other hand, since $\KK$ has $C^2$-class boundary with strictly positive curvature, there is a value $0 < \kappa$ such that the Gaussian curvature of $\KK$ at any point is at least $\kappa$.

According to our conditions, the point of $\KK_i$ with the smallest first coordinate is $(x_i-\tau_i, \mathbf{y}_i)$ and the point of $\KK_{i-1}$ with the largest first coordinate is $(x_{i-1}+\tau_{i-1},\mathbf{y}_{i-1})$. By Theorem 3.2.12. of \cite{Sch14}, the ball of radius $\frac{1}{\kappa}$ and center $(x_{i-1}+\tau_{i-1}-\frac{1}{\kappa},\mathbf{y}_{i-1})$ contains $\KK_{i-1}$. Similarly, the ball of radius $\frac{1}{\kappa}$ and center $(x_i-\tau_i+\frac{1}{\kappa},\mathbf{y}_i)$ contains $\KK_i$. Thus, the fact that $\KK_{i-1} \cap \KK_{i+1} \neq \emptyset$ yields that
\[
\left( (x_i - \tau_i) - (x_{i-1} + \tau_{i-1}) + \frac{2}{\kappa} \right)^2 + \|\mathbf{y}_i-\mathbf{y}_{i-1}\|^2 \leq \frac{4}{\kappa^2}.
\]
Since $- C' \varepsilon \leq (x_i - \tau_i) - (x_{i-1} + \tau_{i-1}) \leq C' \varepsilon$, from the above inequality we have for $0<\varepsilon \leq D$ that
\[
\left( \frac{2}{\kappa} - C' \varepsilon \right)^2 + \| y_i-y_{i-1}\|^2 \leq \frac{4}{\kappa^2},
\]
which yields that $\| y_i-y_{i-1} \| \leq \left(C'\left(\frac{4}{\kappa}+C'D\right)\right)^{\frac{1}{2}} {\varepsilon}^{\frac{1}{2}}$. Thus, the assertion follows by choosing the first coordinate axis as $L$ and setting $C:=(n-1)\left(C'\left(\frac{4}{\kappa}+C'D\right)\right)^{\frac{1}{2}} $.

Now we prove the second statement. Let us assume that the diameter of $\KK$ is two. Let us assume that the points $\p_2,\p_3, \ldots, \p_n$ are collinear and their line is parallel to a diameter of $\KK$, and their distances are $\tau_2+\tau_3, \ldots, \tau_{n-1}+\tau_n$. Let us assume that $\p_1+ \tau \KK$ and $\p_2 + \tau_2 \KK$ touch, and the angle $\alpha$ of the halflines through $\p_1$ and $\p_3$, starting at $\p_2$ is almost $\pi$. Let $\delta > 0$ denote the distance of $\p_2$ from the line passing through the points $\p_1$ and $\p_n$. If $\delta$ is sufficiently small, then the $\KK$-circumradius $\bar{\tau}$ of the NS-family $\mathcal{F}= \{ \p_i + \tau_i \KK : i=1,2,\ldots,n\}$ defined in this way is equal to $\frac{1}{2} (\tau_1 + \tau_n + \|\p_n-\p_1\|_{\KK}) \geq \frac{1}{2} (\tau_1 + \tau_n + \|\p_n-\p_1\|)$, where $\| \cdot \|_{\KK}$ denotes the norm generated by $\KK$, which is defined by $\|\xx\|_{\KK}:=\min\{\lambda \geq 0\  |\  \xx\in \lambda \KK\}$ for $\xx\in \Ee^d$. Note that
\[
\|\p_n-\p_1\| = \sqrt{\|\p_n-\p_2\|^2 - \delta^2} + \sqrt{\|\p_1-\p_2\|^2 - \delta^2} = \|\p_n - \p_2\| + \|\p_1 - \p_2\| - C_1\delta^2 + O(\delta^4)
\]
for some $C_1 > 0$. Let $\beta$ and $\gamma$ denote the angles of the triangle $\conv \{ \p_n \p_2, \p_1\}$ at $\p_n$ and $\p_1$, respectively.
Then $\beta = \arcsin \frac{\delta}{\|\p_n-\p_2\|} =: \Theta_1(\delta)$, and $\gamma = \arcsin \frac{\delta}{\|\p_2-\p_1\|} =: \Theta_2(\delta)$ implying that $\pi-\alpha = \Theta_1(\delta)+\Theta_2(\delta)$. The differentiability properties of $\bd(\KK)$ imply that $\|\p_1 - \p_2\| \geq  \tau_1 + \tau_2 - C_2 (\pi-\alpha)^2 = \tau_1 + \tau_2 - C_3 \delta^2$ for some positive constants $C_2, C_3$.
On the other hand, $\|\p_n - \p_2\|=\|\p_n - \p_2\|_{\KK}=\tau_n + 2\tau_{n-1} + \ldots + 2\tau_3 + \tau_2$, implying that
\[
\|\p_n-\p_1\| \geq (2\tau-\tau_n-\tau_1) - 2C \delta^2
\]
for some $C > 0$. This yields the second statement.

\end{proof}

\begin{rem}
We note that if we consider a centrally symmetric convex body $\mathbf{K}$ in $\Ee^d$ which is not smooth, then one cannot expect the stability result of Theorem~\ref{NS-stability} to hold for homothetic copies of $\mathbf{K}$. For example, there are $n>1$ translates of a unit $d$-cube in $\Ee^d$ such that the smallest homothetic cube containing them is of edge length $n$ and still the centers of the unit cubes do not lie on a line in $\Ee^d$.
\end{rem}

\section{Upper bounding the tightness of NS-lattice arrangements}

L. Fejes T\'oth \cite{Fe76, Fe78} introduced the notion of closeness of a packing of balls in $\Ee^d$ and suggested the problem of finding the closest packing of equal balls in $\Ee^d$. For the status of this problem and several related questions we refer the interested reader to \cite{LiZo}. Here we recall the following variant of this concept, which leads us to an analogue of Theorem~\ref{FTL-Makai} as follows. Recall that for a family $\F$ of translates of a convex body $\KK$, the \emph{tightness} of $\F$ is defined as the largest value $\lambda > 0$ such that there is a homothetic copy $\xx + \lambda \KK$ that does not overlap $\bigcup \F$. Correcting the proof of Theorem 2.13 in \cite{KaLo}, Averkov and Wagner \cite[Theorem 2.5]{AvWa} proved the following theorem. We present a different and shorter proof.

\begin{thm}\label{max-tightness}
Let $\KK$ be an $\mathbf{o}$-symmetric plane convex body and let $L\in\mathcal{L}^2$. If $\F := \{ \xx+ \KK \  | \ \xx \in L \}$ is an NS-lattice arrangement, then the tightness of $\F$ is at most one. This is attained if and only if $\KK$ is a parallelogram with $L$ generating a chessboard-like lattice packing of $\KK$ having tightness one. 
\end{thm}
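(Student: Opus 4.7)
The plan is to argue by contradiction using a direct chord-length computation together with the planar Brunn--Minkowski inequality.

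First, since $\KK$ is $\o$-symmetric, $\KK+\lambda\KK=(1+\lambda)\KK$ for $\lambda\geq 0$, and so $\x_0+\lambda\KK$ has interior disjoint from $\mathbf{z}+\KK$ (for some $\mathbf{z}\in L$) if and only if $\x_0-\mathbf{z}\in\inter((1+\lambda)\KK)$. Hence if the tightness were strictly greater than $1$, there would exist $\x_0\in\Ee^2$ and $\lambda>1$ such that the open homothet $\x_0+\inter((1+\lambda)\KK)$ contains no point of $L$. After an affine change of coordinates sending $L$ to $\Z^2$ (and replacing $\KK$ by its image, still $\o$-symmetric), the non-separability condition becomes $h_\KK(\mathbf{w})\geq 1/2$ for every primitive $\mathbf{w}\in\Z^2$; in particular, $h_\KK((1,0))$ and $h_\KK((0,1))$ are both at least $1/2$.

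Set $U:=\x_0+(1+\lambda)\KK$ with $\lambda>1$. Its vertical extent has length $2(1+\lambda)h_\KK((0,1))\geq 1+\lambda>2$, so the horizontal line $y=k^*$ with $k^*$ the integer closest to the second coordinate $y_0$ of $\x_0$ (so $|k^*-y_0|\leq 1/2$) lies inside the vertical extent of $U$. By Brunn--Minkowski in dimension two, the chord function of a planar convex body is concave in the perpendicular coordinate, so the chord of $U$ on the line $y=k^*$ has length at least
\[
(1+\lambda)\cdot 2h_\KK((1,0))\cdot\left(1-\frac{|k^*-y_0|}{(1+\lambda)h_\KK((0,1))}\right).
\]
Substituting $|k^*-y_0|\leq 1/2$ and the two NS bounds, this is at least $2\lambda h_\KK((1,0))\geq\lambda>1$. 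The corresponding open chord of $\inter(U)$ on the line $y=k^*$ is therefore an open interval of length exceeding $1$ on a line whose lattice points are spaced $1$ apart, so it must contain an integer abscissa, producing a lattice point in $\x_0+\inter((1+\lambda)\KK)$---a contradiction.

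For the equality case, the condition that the tightness equals $1$ forces every inequality in the above estimate to be saturated: the two NS bounds become equalities, $|k^*-y_0|=1/2$, and the Brunn--Minkowski chord function of $\KK$ is linear. The last condition together with $h_\KK((1,0))=h_\KK((0,1))=1/2$ identifies $\KK$ in the normalized coordinates as the rhombus $\{|u|+|v|\leq 1/2\}$, and undoing the affine normalization makes $\KK$ a parallelogram and $L$ the corresponding chessboard-like lattice. The main obstacle is executing the chord-length lower bound tightly enough: it relies on combining both axis-direction NS bounds with the Brunn--Minkowski concavity estimate to ensure the chord exceeds $1$ whenever $\lambda>1$, and similarly on a careful tracking of the equality conditions to pin down the parallelogram/chessboard characterization.
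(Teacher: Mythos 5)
There is a genuine gap in the chord-length estimate, and it is not a small one. In the line where you write that the chord of $U=\x_0+(1+\lambda)\KK$ on $y=k^*$ is at least
\[
(1+\lambda)\cdot 2h_\KK((1,0))\cdot\Bigl(1-\tfrac{|k^*-y_0|}{(1+\lambda)h_\KK((0,1))}\Bigr),
\]
you are implicitly using $(1+\lambda)\cdot 2h_\KK((1,0))$ as the length of the chord of $U$ through its centre at height $y=y_0$. But $2h_\KK((1,0))$ is the \emph{width} of $\KK$ in direction $\e_1$ (the length of its projection onto the $x$-axis), not the length of its horizontal chord through $\o$; the two coincide only when the extreme points of $\KK$ in the $\pm\e_1$ direction happen to lie on the $x$-axis. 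A concrete centrally symmetric polygon where they differ, and which satisfies your normalized NS condition, is the parallelogram $\KK:=\conv\{\pm(1/2,1/4),\pm(0,1/2)\}$ with $L=\Z^2$: here $h_\KK((a,b))=\max\{|a/2+b/4|,|b/2|\}$, so $h_\KK(\mathbf w)\geq 1/2$ for every primitive $\mathbf w\in\Z^2$ (if $|b|\geq 1$ then $|b/2|\geq 1/2$, and if $b=0$ then $a=\pm 1$ gives $1/2$), yet the chord of $\KK$ at $y=0$ is $[-1/3,1/3]$ of length $2/3$, while $2h_\KK((1,0))=1$. With the concavity inequality applied correctly, for $\lambda$ slightly above $1$ one only gets a chord at $y=k^*$ of length at least $(1+\lambda)\cdot\tfrac{2}{3}\cdot\tfrac{\lambda}{1+\lambda}=\tfrac{2\lambda}{3}$, which is below $1$ for all $1<\lambda<3/2$, so no lattice point is forced and the contradiction never arrives. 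The theorem is still true for this $\KK$ (the covering radius of $\Z^2$ in $\|\cdot\|_\KK$ is $3/2$, so the tightness is $1/2$), but your argument does not prove it, and the downstream equality-case analysis inherits the same flaw.

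The paper's proof takes an entirely different, non-elementary route: it invokes Makai's density inequality $\delta(\F)\geq \vol(\KK)\vol(\KK^{\circ})/(16\,\delta(\KK^{\circ}))$ together with Mahler's theorem $\vol(\KK)\vol(\KK^{\circ})\geq 8$ and $\delta(\KK^\circ)\leq 1$ to get $\delta(\F)\geq 1/2$, and then observes that tightness $\geq 1$ would produce two interlocking non-overlapping lattice copies of $\F$, forcing $\delta(\F)\leq 1/2$; equality then forces the Mahler equality case ($\KK$ a parallelogram) and an exact tiling $(L+\KK)\cup(L+\x+\KK)=\Ee^2$, from which the chessboard structure follows. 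Your NS characterization $h_\KK(\mathbf w)\geq 1/2$ for primitives is correct and matches Makai's packing criterion for $L^*+\tfrac14\KK^\circ$, and your reduction to ``no lattice point in $\x_0+\inter((1+\lambda)\KK)$'' is correct (note your ``if and only if'' in the first paragraph is stated backwards — it should read $\notin$), but the single horizontal chord cut is too weak to close the argument; some global information such as area or the polar body's packing density seems unavoidable here.
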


\begin{proof}
By Theorem 1 of \cite{Ma78}, the density $\delta(\F)$ of $\F$ satisfies
\begin{equation}\label{eq:Makai}
\delta(\F) \geq \frac{\vol(\KK)\vol(\KK^{\circ})}{16 \delta(\KK^{\circ})},
\end{equation}
where $\delta(\KK^{\circ})$ is the density of a densest lattice packing of the polar body $\KK^{\circ}$ of $\KK$. (Recall that $\mathbf{K}^\circ:=\{\mathbf{x}\in \Ee^d\ |\ \langle\mathbf{x},\mathbf{y}\rangle\leq 1 \ {\rm for \ all}\ \mathbf{y}\in \mathbf{K}  \}$.) Thus, using the fact that $\delta(\KK^{\circ}) \leq 1$ and by Mahler's result (\cite{GoSc}) according to which $\vol(\KK) \vol(\KK^{\circ}) \geq 8$, we obtain that
\[
\delta(\F) \geq \frac{1}{2}.
\]

Now, assume that the tightness of $\F$ is at least one. Then there is a translate $\xx + \KK$ that does not overlap $\Lambda + \KK$. Thus, $L + \xx + \KK$ does not overlap $L + \KK$. Since these sets are translates of each other, it follows that $\delta(\F) \leq \frac{1}{2}$. Combining it with the previous inequality for $\delta(\F)$, we have that $\delta(\F) = \frac{1}{2}$. Thus, the results about the $2$-dimensional Mahler Conjecture (\cite{GoSc}) yield that $\KK$ is a parallelogram, and that $(L + \KK) \cup (L + \xx + \KK) = \Ee^2$. From this an elementary consideration yields that $L$ is generated by two linearly independent vertices of $\KK$, and also that the tightness of $\F$ is one.
\end{proof}

Let $L\in\mathcal{L}^d$, and let $\KK$ be an $\mathbf{o}$-symmetric convex body in $\Ed$. Let $\F := \{ \xx+ \KK \  | \ \xx \in L \}$. Then we denote by $T_{ns}(d)$ the supremum of the tightness of all NS-lattice arrangements $\F$ of all convex bodies $\KK$. 

\begin{thm}
For $d\geq 2$, we have
\[
d-1 \leq T_{ns}(d) \leq C d(1+\log d)
\]
for some universal constant $C > 0$.
\end{thm}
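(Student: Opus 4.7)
The plan is to rephrase $T_{ns}(d)$ in the language of covering minima of Kannan and Lov\'asz \cite{KaLo} and then to invoke a transference theorem for $\o$-symmetric convex bodies. Fix an $\o$-symmetric convex body $\KK$ in $\Ee^d$ and a lattice $L\in\mathcal{L}^d$, and denote by $\|\cdot\|_\KK$ the norm generated by $\KK$. Since two positive homothets $\a+\alpha\KK$ and $\b+\beta\KK$ have disjoint interiors iff $\|\a-\b\|_\KK\ge\alpha+\beta$, the tightness of $\F=L+\KK$ equals $\mu(\KK,L)-1$, where $\mu(\KK,L)=\max_{\x\in\Ee^d}\min_{\mathbf{z}\in L}\|\x-\mathbf{z}\|_\KK$ is the covering radius of $L$ with respect to $\KK$. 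A short support function argument shows, in turn, that the NS property of $\F$ is equivalent to $h_\KK(\uu)\ge\tfrac12$ for every $\uu\in L^{\ast}\setminus\{\o\}$; writing the first covering minimum as
\[
\mu_1(\KK,L)=\frac{1}{2\lambda_1(\KK^\circ,L^\ast)},\qquad \lambda_1(\KK^\circ,L^\ast)=\min\{h_\KK(\uu):\uu\in L^\ast\setminus\{\o\}\},
\]
the NS condition becomes $\mu_1(\KK,L)\le 1$. Since both $\mu$ and $\mu_1$ are homogeneous of degree $-1$ in $\KK$, one may normalise $\mu_1=1$, so that the problem reduces to
\[
T_{ns}(d)+1=\sup_{\KK,L}\frac{\mu(\KK,L)}{\mu_1(\KK,L)}.
\]

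For the lower bound $T_{ns}(d)\ge d-1$ I would take the cross-polytope $\KK=\{\x\in\Ee^d:\sum_{i=1}^d|x_i|\le 1\}$ together with the lattice $L=2\Z^d$. Every nonzero $\uu\in L^\ast=\tfrac12\Z^d$ satisfies $h_\KK(\uu)=\|\uu\|_\infty\ge\tfrac12$, so by the identity above $\mu_1(\KK,L)=1$ and $\F=L+\KK$ is NS. Moreover the point $\p=(1,1,\ldots,1)\in\Ee^d$ satisfies
\[
\min_{\mathbf{z}\in 2\Z^d}\|\p-\mathbf{z}\|_1=\sum_{i=1}^d\min_{n_i\in\Z}|1-2n_i|=d,
\]
so $\mu(\KK,L)=d$ and the translate $\p+(d-1)\KK$ witnesses that the tightness of $\F$ is at least $d-1$.

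For the upper bound I would invoke Banaszczyk's transference theorem for $\o$-symmetric convex bodies: there is an absolute constant $C_0>0$ such that
\[
\mu(\KK,L)\cdot\lambda_1(\KK^\circ,L^\ast)\le C_0\,d(1+\log d)
\]
for every $\o$-symmetric $\KK\subset\Ee^d$ and every $L\in\mathcal{L}^d$. Combining this with $\mu_1(\KK,L)=1/\bigl(2\lambda_1(\KK^\circ,L^\ast)\bigr)$ gives
\[
\frac{\mu(\KK,L)}{\mu_1(\KK,L)}=2\,\mu(\KK,L)\,\lambda_1(\KK^\circ,L^\ast)\le 2C_0\,d(1+\log d),
\]
and therefore $T_{ns}(d)\le Cd(1+\log d)$ with $C:=2C_0$. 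The main obstacle is the transference bound itself: its proof is not elementary and rests on Banaszczyk's Gaussian-lattice-measure technique. Once it is cited, the remaining reductions (the translation between tightness/NS and covering minima, polarity, and the Minkowski duality between $L$ and $L^\ast$) are routine.
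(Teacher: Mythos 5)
Your proof is correct and takes essentially the same route as the paper: identify the NS condition with $\lambda_1(\KK^\circ,L^\ast)\ge\tfrac12$ (you re-derive this via a support function argument; the paper cites it as Makai's theorem), relate tightness to the covering radius via $\mu-1$, invoke Banaszczyk's transference bound $\lambda_1(\KK^\circ,L^\ast)\cdot\mu(\KK,L)\le C_0\,d(1+\log d)$, and use the cross-polytope lattice arrangement for the lower bound. The only cosmetic differences are that you package the duality through the first covering minimum $\mu_1=1/(2\lambda_1)$ and verify non-separability of the cross-polytope example through that duality, whereas the paper observes directly that the arrangement contains every coordinate axis; these are equivalent and equally brief.
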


\begin{proof}
Let $L\in\mathcal{L}^2$ and let $L^*:=\{\mathbf{x}\in\Ee^d\ |\ \langle\mathbf{x},\mathbf{y}\rangle\in\mathbb{Z}\ {\rm for\ all}\ \mathbf{y}\in L\}$ be its dual lattice.
It is shown by Makai in \cite{Ma78} that $L + \KK$ is non-separable if and only if $L^* + \frac{1}{4} \KK^{\circ}$ is a packing. The latter fact is equivalent to saying that the length of a shortest non-zero vector of $L^*$, measured in the norm of $\KK^{\circ}$, is at least $\frac{1}{2}$, or in other words, $\lambda_1(L^*, \KK^{\circ}) \geq \frac{1}{2}$, where $\lambda_1(L^*, \KK^{\circ})$ denotes the first successive minimum of $L^*$ with respect to $\KK^{\circ}$ (\cite{Banaszczyk2}). On the other hand, if $\mu(L, \KK)$ denotes the covering radius of the lattice arrangement $L + \KK$, i.e., the smallest value $\mu$ such that $L + \mu \KK$ covers $\Ee^d$, then by the fact that $(L^*)^*=L$, we have
\[
\lambda_1(L^*, \KK^{\circ}) \cdot \mu(L, \KK) \leq C'd(1+\log d)
\]
for some universal constant $C'>0$ independent of $\KK$, $L$ and $d$.
This inequality was proved by Banaszczyk in \cite[Corollary 1]{Banaszczyk2}. Combining this inequality with the previous observations it follows that $T_{ns}(d) \leq C d(1+\log d)$ for some universal constant $C >0$.

To prove the lower bound we observe that if $L:= \mathbb{Z}^d$ and $\KK := \frac{1}{2} \conv \{ \pm \e_1, \pm \e_2, \ldots, \pm \e_d \}$, where the vectors $\e_1, \ldots, \e_d$ are the standard basis vectors, then $L+ \KK$ contains all coordinate axes, and thus it is an NS-lattice arrangement. We further note that the smallest value of $\mu$ such that $L + \mu \KK$ is a covering of $\Ee^d$ is clearly $d$.
\end{proof}

\section{Minimal coverings of weakly non-separable families}

To introduce our new result, we require a notion that was originally defined for simplices by Akopyan, Balitskiy, and Grigorev in \cite{ABG}. We extend this concept to general convex polytopes as follows.

\begin{defn}\label{erdos extended}
Let $\mathbf{P}$ be a $d$-dimensional convex polytope in $\Ee^d$ and let $\mathcal{P} := \{ \xx_i + \tau_i \mathbf{P}\ |\  \xx_i\in \Ee^d, \tau_i>0, i=1,2,\ldots, n\}$, where $d\ge 2$ and $n\ge 2$.
We call $\mathcal{P}$ a {\rm weakly non-separable family}, in short, a {\rm WNS-family}, if every hyperplane parallel to some facet of $\mathbf{P}$ and intersecting $\conv \left( \bigcup \mathcal{P} \right)$ intersects a member of $\mathcal{P}$ in $\Ee^d$, i.e., there is no hyperplane parallel to some facet of $\mathbf{P}$ that strictly separates some elements of $\mathcal{P}$ from all the other elements of $\mathcal{P}$ in $\Ee^d$. Then, let $\Lambda(\mathcal{P}) > 0$ denote the smallest positive value $\lambda$ such that a translate of
$\lambda \left( \sum_{i=1}^n \tau_i \right) \mathbf{P}$ covers $\bigcup \mathcal{P}$.
\end{defn}

Clearly, every NS-family of finitely many positive homothetic copies of an arbitrary $d$-dimensional convex polytope is a WNS-family in $\Ee^d$, but not necessarily the other way around. Thus, the following theorem is a strengthening of Theorem~\ref{centrally symmetric convex bodies} for centrally symmetric convex polytopes. 

\begin{thm}\label{centrally symmetric convex bodies revisited}
For all $d\ge 2$ and $n\ge 2$, and for every $\mathbf{o}$-symmetric $d$-dimensional convex polytope $\mathbf{P}_0$ in in $\Ee^d$ and every WNS-family $\mathcal{P} := \{ \xx_i + \tau_i \mathbf{P}_0\ |\  \xx_i\in \Ee^d, \tau_i>0, i=1,2,\ldots, n\}$ the inequality $\Lambda(\mathcal{P}) \leq 1$ holds. 
\end{thm}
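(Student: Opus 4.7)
The plan is to show that the weighted centroid $\mathbf{p}^{\ast}:=\frac{\sum_{i=1}^{n}\tau_{i}\mathbf{x}_{i}}{T}$, with $T:=\sum_{i=1}^{n}\tau_{i}$, serves as the center of the covering polytope, i.e.\ that $\bigcup\mathcal{P}\subseteq\mathbf{p}^{\ast}+T\mathbf{P}_{0}$. Since $\mathbf{P}_{0}$ is $\mathbf{o}$-symmetric, the containment $\mathbf{x}_{i}+\tau_{i}\mathbf{P}_{0}\subseteq\mathbf{p}^{\ast}+T\mathbf{P}_{0}$ is equivalent to $\|\mathbf{x}_{i}-\mathbf{p}^{\ast}\|_{\mathbf{P}_{0}}\leq T-\tau_{i}=:\sigma_{i}$, where $\|\cdot\|_{\mathbf{P}_{0}}$ is the norm generated by $\mathbf{P}_{0}$ (as used in the proof of Theorem~\ref{NS-stability}). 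The point where the polytope hypothesis enters crucially is that for an $\mathbf{o}$-symmetric $d$-polytope the facets come in pairs with opposite outer unit normals $\{\pm\mathbf{u}_{j}\}_{j=1}^{m}$ and common support values $h_{j}>0$, so that
\[
\|\mathbf{y}\|_{\mathbf{P}_{0}}=\max_{1\leq j\leq m}\frac{|\langle\mathbf{y},\mathbf{u}_{j}\rangle|}{h_{j}}.
\]
Consequently it is enough to establish $|\langle\mathbf{x}_{i}-\mathbf{p}^{\ast},\mathbf{u}_{j}\rangle|\leq\sigma_{i}h_{j}$ for every $i$ and every facet normal $\mathbf{u}_{j}$---precisely the directions in which the WNS hypothesis supplies information.

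To do this I would fix a facet normal $\mathbf{u}_{j}$ and project the entire family $\mathcal{P}$ onto the line spanned by $\mathbf{u}_{j}$. Writing $a_{ij}:=\langle\mathbf{x}_{i},\mathbf{u}_{j}\rangle$, the projections are the intervals $[a_{ij}-\tau_{i}h_{j},\,a_{ij}+\tau_{i}h_{j}]$, which are positive homothetic copies of the $\mathbf{o}$-symmetric $1$-dimensional body $[-h_{j},h_{j}]$. The WNS hypothesis, applied to the hyperplanes orthogonal to $\mathbf{u}_{j}$, asserts exactly that the union of these intervals is connected; that is, the projected family is a $1$-dimensional NS-family. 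Applying the $d=1$ instance of Theorem~\ref{centrally symmetric convex bodies}---sharpened by the weighted-centroid refinement from \cite{BezLan16} already invoked in the proof of Theorem~\ref{NS-stability}---one obtains that the $1$-dimensional weighted centroid $\sum_{i}\tau_{i}a_{ij}/T=\langle\mathbf{p}^{\ast},\mathbf{u}_{j}\rangle$ is the center of a covering interval of length $2Th_{j}$, which is equivalent to $|\langle\mathbf{x}_{i}-\mathbf{p}^{\ast},\mathbf{u}_{j}\rangle|\leq\sigma_{i}h_{j}$ for every $i$.

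Taking the maximum over the finitely many facet normals gives $\|\mathbf{x}_{i}-\mathbf{p}^{\ast}\|_{\mathbf{P}_{0}}\leq\sigma_{i}$, hence $\mathbf{x}_{i}+\tau_{i}\mathbf{P}_{0}\subseteq\mathbf{p}^{\ast}+T\mathbf{P}_{0}$ for every $i$, and therefore $\bigcup\mathcal{P}\subseteq\mathbf{p}^{\ast}+T\mathbf{P}_{0}$, which is the assertion $\Lambda(\mathcal{P})\leq 1$. The main point of the argument is conceptual rather than technical: the two assumptions (central symmetry and polytope) combine to reduce $\|\cdot\|_{\mathbf{P}_{0}}$ to a maximum over the facet normals, so the WNS hypothesis---although a priori weaker than NS---supplies exactly the input the facet-by-facet $1$-dimensional argument needs. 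The only remaining ingredient is the $1$-dimensional weighted-centroid bound; if one reproves it in place of citing \cite{BezLan16}, sorting the $a_{ij}$ along $\mathbf{u}_{j}$ reduces it to a short telescoping estimate using the consecutive-overlap inequality $a_{(l+1)j}-a_{lj}\leq(\tau_{l}+\tau_{l+1})h_{j}$ furnished by WNS, so no genuine obstacle appears.
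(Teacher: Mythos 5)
Your proof is correct and takes essentially the same route as the paper: both use the weighted centroid $\mathbf{p}^{\ast}=\sum\tau_i\mathbf{x}_i/\sum\tau_i$, project onto each facet-normal direction, invoke the one-dimensional Goodman--Goodman weighted-centroid lemma (Lemma~\ref{lem:GG}) using the fact that WNS makes each such projection a single interval, and then conclude from the fact that a polytope is determined by its support values in its facet-normal directions. The only cosmetic difference is that the paper phrases the final step via support functions $h_{\mathcal{P}}(\mathbf{u})\le h_{\mathbf{P}'}(\mathbf{u})$, whereas you phrase it as the per-member bound $\|\mathbf{x}_i-\mathbf{p}^{\ast}\|_{\mathbf{P}_0}\le T-\tau_i$; these are equivalent.
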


\begin{rem}
The upper bound $1$ in Theorem~\ref{centrally symmetric convex bodies revisited} is a sharp one. Namely, equality can be attained for an $o$-symmetric $d$-dimensional convex polytope $\mathbf{P}_0$ if the centers of the non-overlapping homothetic copies of $\mathcal{P}$ lie on a line in $\Ee^d$ such that any two consecutive homothetic copies touch each other. 
\end{rem}

\begin{proof} We follow the proof of Theorem~\ref{centrally symmetric convex bodies} in \cite{BezLan16}. So, we start with the following statement from \cite{GG45}.

\begin{lem}\label{lem:GG}
Let $\F := \left\{ [x_i-\tau_i, x_i + \tau_i ] | \tau_i > 0, i=1,2,\ldots, n\right\}$ be a family of closed intervals in $\Re$ such that $\bigcup \F$ is a single closed interval in $\Re$. Let $x := \left( \sum_{i=1}^n \tau_i x_i \right) / \left( \sum_{i=1}^n \tau_i \right)$. Then the closed interval
$ 
\left[ x - \sum_{i=1}^n \tau_i , x + \sum_{i=1}^n \tau_i \right]
$
covers $\bigcup \F$.
\end{lem}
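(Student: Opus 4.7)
I plan to establish Lemma~\ref{lem:GG} by strong induction on $n$. Because the statement is symmetric under the reflection $t \mapsto -t$, it suffices to prove only that $b := \max_i (x_i + \tau_i) \le x + \tau$, where $\tau := \sum_{i=1}^n \tau_i$; the other inequality $x - \tau \le a := \min_i (x_i - \tau_i)$ then follows by the same argument applied to the reflected family. The base case $n = 1$ is immediate. For $n \ge 2$, I would relabel so that $b = x_n + \tau_n$ and set $I_n := [x_n - \tau_n,\, x_n + \tau_n]$. Removing $I_n$ may disconnect the union, so I decompose $\bigcup (\F \setminus \{I_n\}) = V_1 \cup \dots \cup V_m$ into its (pairwise disjoint) connected components, and let $\F_s$ be the subfamily whose union equals $V_s$, with $\tau^{(s)} := \sum_{I_j \in \F_s} \tau_j$ and weighted center $x^{(s)} := (\tau^{(s)})^{-1} \sum_{I_j \in \F_s} \tau_j x_j$. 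Since each $\F_s$ has strictly fewer than $n$ members and connected union, the induction hypothesis yields $V_s = [a_s, b_s] \subseteq [x^{(s)} - \tau^{(s)}, x^{(s)} + \tau^{(s)}]$.

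A direct computation, using the identities $\tau x = \tau_n x_n + \sum_s \tau^{(s)} x^{(s)}$ and $\tau - \tau_n = \sum_s \tau^{(s)}$, reduces the desired inequality to a cleaner form:
\[
\tau \bigl( (x+\tau) - (x_n+\tau_n) \bigr) \;=\; \sum_{s=1}^m \tau^{(s)} \bigl( x^{(s)} - x_n + \tau \bigr).
\]
Because each $\tau^{(s)} > 0$, it is enough to verify the pointwise inequality $x^{(s)} \ge x_n - \tau$ for every component index $s$.

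The key step, and the only point where the connectedness of $\bigcup \F$ enters in an essential way, is establishing a lower bound on $b_s$. Since the $V_s$ are the maximal connected pieces of $\bigcup (\F \setminus \{I_n\})$ and they are pairwise disjoint, the requirement that $\bigcup \F$ be a single interval forces $I_n \cap V_s \ne \emptyset$ for every $s$---otherwise $V_s$ would remain isolated after adding $I_n$. This gives $x_n - \tau_n \le b_s$. Combining this with the induction hypothesis bound $x^{(s)} + \tau^{(s)} \ge b_s$ and the trivial inequality $\tau = \tau_n + \sum_{s'} \tau^{(s')} \ge \tau_n + \tau^{(s)}$, I obtain
\[
x^{(s)} \;\ge\; b_s - \tau^{(s)} \;\ge\; x_n - \tau_n - \tau^{(s)} \;\ge\; x_n - \tau,
\]
which closes the induction. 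The main subtlety I expect to navigate is the case $m \ge 2$, in which the family $\F \setminus \{I_n\}$ has disconnected union; the decomposition into components makes both the algebraic reduction and the connectivity bound accommodate this case uniformly, so no separate argument is needed.
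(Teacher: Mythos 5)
Your proof is correct. Note that the paper itself does not prove Lemma~\ref{lem:GG}: it is quoted verbatim from Goodman and Goodman's 1945 paper, so there is no in-text argument to compare against. Your induction is a valid self-contained proof. The algebraic identity $\tau\bigl((x+\tau)-(x_n+\tau_n)\bigr)=\sum_s \tau^{(s)}\bigl(x^{(s)}-x_n+\tau\bigr)$ checks out, the connectedness of $\bigcup\F$ is used exactly where it must be (to force $I_n\cap V_s\neq\emptyset$, hence $b_s\geq x_n-\tau_n$, for every component), and the chain $x^{(s)}\geq b_s-\tau^{(s)}\geq x_n-\tau_n-\tau^{(s)}\geq x_n-\tau$ closes the induction. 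Your route differs somewhat from the classical Goodman--Goodman argument, which inducts by merging two overlapping intervals $I_j, I_k$ into the single interval of half-length $\tau_j+\tau_k$ centered at their weighted centroid and verifying that this merged interval covers $I_j\cup I_k$; that approach reduces $n$ by one at each step and never needs to consider a disconnected residual family, whereas yours removes the rightmost-reaching interval and handles the resulting components uniformly via strong induction. Both are elementary and of comparable length; yours has the minor advantage of isolating the single pointwise inequality $x^{(s)}\geq x_n-\tau$ as the only thing to verify, at the cost of the component bookkeeping.
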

Next, let $\xx := \left( \sum_{i=1}^n \tau_i \xx_i \right) / \left( \sum_{i=1}^n \tau_i \right)$, and set $\mathbf{P}' := \xx + \left( \sum_{i=1}^n \tau_i \right) \mathbf{P}_0$. We prove that $\mathbf{P}'$ covers $\bigcup \mathcal{P}$. For any line $L$ through the origin $\mathbf{o}$, let $\proj_L : \Ee^d \to L$ denote the orthogonal projection onto $L$, and let $h_{\mathcal{P} }: \Sph^{d-1} \to \Re$ and $h_{\mathbf{P}'} : \Sph^{d-1} \to \Re$ denote the support functions of $\conv\left( \bigcup \mathcal{P} \right)$ and $\mathbf{P}'$, respectively (for the definition of support function, see \cite{Sch14}).
If $L$ is orthogonal to any facet of $\mathbf{P}_0$, then $\proj_L \left( \bigcup \mathcal{P} \right)$ is a single interval, which, by Lemma~\ref{lem:GG}, is covered by $\proj_L \left( \mathbf{P}' \right)$.
Thus, for any $\mathbf{u} \in \Sph^{d-1}$ which is orthogonal to some facet of $\mathbf{P}_0$, we have that $h_{\mathcal{P}}(\mathbf{u}) \leq h_{\mathbf{P}'}(\mathbf{u})$. Since every $d$-dimensional convex polytope is the intersection of its closed supporting halfspaces bounded by the hyperplanes of its facets in $\Ee^d$, it readily follows that every $\xx_i + \tau_i \mathbf{P}_0$ is covered by $ \mathbf{P}'$ and therefore $\bigcup \mathcal{P} \subseteq \mathbf{P}'$, finishing the proof of Theorem~\ref{centrally symmetric convex bodies revisited}. 
\end{proof}

The following theorem is an extension of Theorem~\ref{simplex bound} to WNS-families of convex polytopes. In order to state it we need the following notation.

\begin{defn}\label{WNS-simplex}
For $d \geq 2$, let $\Lambda_{\rm simplex}^{(d)}$ denote the supremum of $\Lambda(\K)$, where $\K$ runs over the WNS-families of finitely many positive homothetic $d$-simplices in $\Ee^d$. 
\end{defn}

\begin{rem}
We note that $\Lambda_{\rm simplex}^{(d)}$ is a non-decreasing sequence of $d$. Indeed, let $\K$ be a WNS-family of finitely many $(d-1)$-simplices in a hyperplane of $\Ee^d$. Clearly, we can extend the elements of $\K$ to homothetic $d$-simplices such that each element is a facet of its extension. Then, denoting this extended family by ${\K}'$, we have $\Lambda (\K) = \Lambda({\K}')$, and ${\K}'$ is a WNS-family.
\end{rem}

\begin{thm}\label{WNS-simplex bound}
For all $d\ge 2$, $\Lambda_{\rm simplex}^{(d)}=\sup_{\mathcal{P}} \Lambda(\mathcal{P})$, where $\mathcal{P}$ runs over the WNS-families of finitely many positive homothetic copies of an arbitrary $d$-dimensional convex polytope $\mathbf{P}$ in $\Ee^d$.
\end{thm}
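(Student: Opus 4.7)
The plan is to prove the nontrivial direction $\sup_{\mathcal{P}} \Lambda(\mathcal{P})\le\Lambda_{\rm simplex}^{(d)}$; the reverse inequality is immediate since every $d$-simplex is a $d$-polytope. The strategy is to translate the existence of a covering translate $\mathbf{y}+\Lambda_{\rm simplex}^{(d)}\tau\mathbf{P}$ of $\bigcup\mathcal{P}$ (with $\tau:=\sum_i\tau_i$) into a linear programming feasibility problem in the centre $\mathbf{y}$, and verify it via Farkas' lemma, reducing the dual condition to instances solvable by applying $\Lambda_{\rm simplex}$ on lower-dimensional simplex families.

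Fix $\mathbf{P}$ with outward facet normals $\mathbf{u}_1,\ldots,\mathbf{u}_N$ and set $a_j:=h_\mathbf{P}(\mathbf{u}_j)$ and $F_j:=\max_i(\langle\xx_i,\mathbf{u}_j\rangle+\tau_i a_j)$. Since $\mathbf{P}=\bigcap_j\{\mathbf{x}:\langle\mathbf{x},\mathbf{u}_j\rangle\le a_j\}$, the cover $\mathbf{y}+\Lambda\tau\mathbf{P}\supseteq\bigcup\mathcal{P}$ is equivalent to the linear inequalities $\langle\mathbf{y},\mathbf{u}_j\rangle\ge F_j-\Lambda\tau a_j$ for all $j$. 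By Farkas' lemma this system is feasible precisely when $\sum_j w_j F_j\le\Lambda\tau\sum_j w_j a_j$ holds for every $w\in\mathbb{R}_{\ge 0}^N$ with $\sum_j w_j\mathbf{u}_j=\mathbf{o}$. As the set of such $w$ is a polyhedral cone, it suffices to verify this on its extreme rays, i.e., on minimal positive dependencies; a standard Carath\'eodory-type argument shows these are supported on subsets $S\subseteq\{1,\ldots,N\}$ with $|S|=k+1\in\{2,\ldots,d+1\}$, and with $\mathbf{o}$ in the relative interior of $\conv\{\mathbf{u}_j:j\in S\}$.

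Fix such an $S$ and let $V:=\span\{\mathbf{u}_j:j\in S\}$, of dimension $k$. Form the $k$-simplex $\mathbf{T}_S^V:=\bigcap_{j\in S}\{\mathbf{x}\in V:\langle\mathbf{x},\mathbf{u}_j\rangle\le a_j\}$ in $V$; it contains $\proj_V\mathbf{P}$ and they share a supporting hyperplane in each direction $\mathbf{u}_j$, $j\in S$. The crucial lemma is that the projected family $\mathcal{T}_S:=\{\proj_V\xx_i+\tau_i\mathbf{T}_S^V\}$ is a WNS-family of $k$-simplices in $V$. Indeed, for each $j\in S$ the $\mathbf{u}_j$-extents of $\proj_V\xx_i+\tau_i\mathbf{T}_S^V$ and of $\proj_V\xx_i+\tau_i\proj_V\mathbf{P}$ share their upper endpoint, while the former extends no less far in the $-\mathbf{u}_j$ direction, so any hyperplane $H\subset V$ normal to $\mathbf{u}_j$ that strictly separated the members of $\mathcal{T}_S$ would strictly separate $\{\proj_V\xx_i+\tau_i\proj_V\mathbf{P}\}$ as well; lifting $H$ to $H+V^\perp\subset\Ee^d$ would then yield a hyperplane separating $\mathcal{P}$ perpendicular to the facet normal $\mathbf{u}_j$ of $\mathbf{P}$, contradicting the WNS assumption. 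This lemma, and particularly the proper-subspace case $|S|<d+1$ where $\mathbf{T}_S^V$ is a genuine inflation of $\proj_V\mathbf{P}$ in the $-\mathbf{u}_j$ directions, is the main obstacle of the argument.

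With the lemma in hand, the defining inequality $\Lambda(\mathcal{T}_S)\le\Lambda_{\rm simplex}^{(k)}\le\Lambda_{\rm simplex}^{(d)}$ (using the monotonicity of the sequence $\Lambda_{\rm simplex}^{(\cdot)}$ noted in the Remark preceding the theorem, together with the elementary $\Lambda_{\rm simplex}^{(1)}=1$ that handles the antipodal case $k=1$ by a direct $1$-dimensional computation) yields some $\mathbf{y}^V\in V$ satisfying $\langle\mathbf{y}^V,\mathbf{u}_j\rangle+\Lambda_{\rm simplex}^{(d)}\tau a_j\ge F_j$ for every $j\in S$. Multiplying the $j$-th inequality by $w_j$ and summing, the identity $\sum_{j\in S}w_j\mathbf{u}_j=\mathbf{o}$ annihilates the $\mathbf{y}^V$-term and leaves exactly $\sum_{j\in S}w_jF_j\le\Lambda_{\rm simplex}^{(d)}\tau\sum_{j\in S}w_ja_j$, which is the required Farkas condition. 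Since this holds on every extreme ray, the primal LP is feasible with $\Lambda=\Lambda_{\rm simplex}^{(d)}$, yielding the desired cover and completing the proof.
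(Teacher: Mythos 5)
Your proof is correct, and it follows a genuinely different route from the paper's. The paper first reduces to \emph{generic} polytopes (facet normals in general position) by a limiting approximation argument (Lemma~\ref{WNS-sup}), then uses circumscribed $d$-simplices $\Delta(\mathbf{P})$: for each such $\Delta$, the inflated family $\{\xx_i+\tau_i\Delta\}$ is shown to be a WNS-family of simplices, hence covered by $\Lambda_{\rm simplex}^{(d)}\tau\Delta$, and finally a Helly-based Lutwak-type containment lemma (Lemma~\ref{Lutwak-type}) passes from ``covered by every circumscribed simplex'' to ``covered by $\Lambda_{\rm simplex}^{(d)}\tau\mathbf{P}$ itself.'' Your argument instead writes the covering condition as a linear feasibility problem in the centre $\mathbf{y}$ (using that $\mathbf{P}$ is the intersection of its facet halfspaces), invokes Farkas, and certifies each extreme ray of the dual cone --- i.e.\ each minimal positive dependency $\sum_{j\in S}w_j\mathbf{u}_j=\mathbf{o}$ with $|S|=k+1$ --- by projecting the family onto the $k$-dimensional span $V$, replacing $\proj_V\mathbf{P}$ with the inflated $k$-simplex $\mathbf{T}_S^V$ bounded by the $k+1$ relevant halfspaces, checking that the resulting family is WNS in $V$, and applying $\Lambda_{\rm simplex}^{(k)}\le\Lambda_{\rm simplex}^{(d)}$. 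Your $k=d$ case ($|S|=d+1$) reproduces the paper's circumscribed-simplex step, but the duality framework automatically handles the cases $k<d$ where the $|S|$ normals are linearly dependent in a lower-dimensional subspace; this is precisely what would fail for a non-generic polytope in the Lutwak route, so your approach dispenses with the approximation-by-generic-polytopes reduction entirely. What the paper's route buys is a self-contained containment statement (Lemma~\ref{Lutwak-type}), of some independent geometric interest; what your route buys is a shorter, uniform argument that avoids a limiting step and makes the dimensional monotonicity $\Lambda_{\rm simplex}^{(1)}\le\cdots\le\Lambda_{\rm simplex}^{(d)}$ do the heavy lifting, with the $k=1$ (antipodal-normal) case reducing to the one-dimensional Goodman--Goodman covering of intervals. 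All the internal steps (affine independence of $\{\mathbf{u}_j:j\in S\}$ from extremality of the ray, equality of $\mathbf{u}_j$-support functions of $\mathbf{T}_S^V$ and $\proj_V\mathbf{P}$ for $j\in S$, the lifting of a separating hyperplane by $V^\perp$, and the Farkas summation that annihilates $\mathbf{y}^V$) check out.
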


\begin{proof} We shall need the following concept.

\begin{defn}
We call a $d$-dimensional convex polytope a {\rm $d$-dimensional generic convex polytope in $\Ee^d$ } if the outer unit normal vectors of any $d$ of its facets are linearly independent in $\Ee^d$, $d\geq 2$.
\end{defn}

\begin{lem}\label{WNS-sup}
If $\mathcal{P}$ (resp., $\mathcal{P}'$) runs over all WNS-families of finitely many positive homothetic copies of an arbitrary $d$-dimensional convex polytope $\mathbf{P}$ (resp., of an arbitrary $d$-dimensional generic convex polytope $\mathbf{P}'$) in $\Ee^d$, then $\sup_{\mathcal{P}} \Lambda(\mathcal{P})=\sup_{\mathcal{P}'} \Lambda(\mathcal{P}')$.
\end{lem}
\begin{proof} Clearly, $\sup_{\mathcal{P}} \Lambda(\mathcal{P})\geq\sup_{\mathcal{P}'} \Lambda(\mathcal{P}')$. So, we are left to show that $\sup_{\mathcal{P}} \Lambda(\mathcal{P})\leq\sup_{\mathcal{P}'} \Lambda(\mathcal{P}')$. We do it as follows. Let $\mathcal{P} := \{ \xx_i + \tau_i \mathbf{P}\ |\  \xx_i\in \Ee^d, \tau_i>0, i=1,2,\ldots, n\}$ be an arbitrary WNS-family of an arbitrary $d$-dimensional convex polytope $\mathbf{P}$ in $\Ee^d$. Without loss of generality we may assume that $\tau_1={\rm min}\{\tau_i\ |\ 1\leq i\leq n\}$ and $\xx_1=\mathbf{o}\in{\rm int} (\tau_1\mathbf{P})$. Let $\mathbf{P}_1:=\tau_1\mathbf{P}$. It will be convenient to use the notation $X_{\delta}:=\cup\{\mathbf{B}^d[\xx, \delta]\ |\ \xx\in X\}$ for any set $X\subseteq \Ee^d$, where $\mathbf{B}^d[\xx, \delta]:=\{\mathbf{y}\in\Ee^d \  | \|\mathbf{y}-\xx\|\leq \delta\}$. Furthermore, for any $0<\delta'<\frac{\pi}{2}$, let $f(\delta')\geq 0$ be the smallest nonnegative real with the property that for any two hyperplanes $H$ and $H'$ of $ \Ee^d$ having unit normal vectors with angle at most $\delta'$ and passing through an arbitrary common point of ${\rm conv}\left(\cup\mathcal{P}\right)$ the following containment holds: $H\cap{\rm conv}\left(\cup\mathcal{P}\right)\subset H'_{f(\delta')}$. We note that $\lim_{\delta'\to 0^+}f(\delta')=0$. The driving force of our proof is the following observation whose easy proof we leave to the reader. (See Figure~\ref{fig:WNS approximation}.)

\begin{figure}[ht]
\begin{center}
\includegraphics[width=0.9\textwidth]{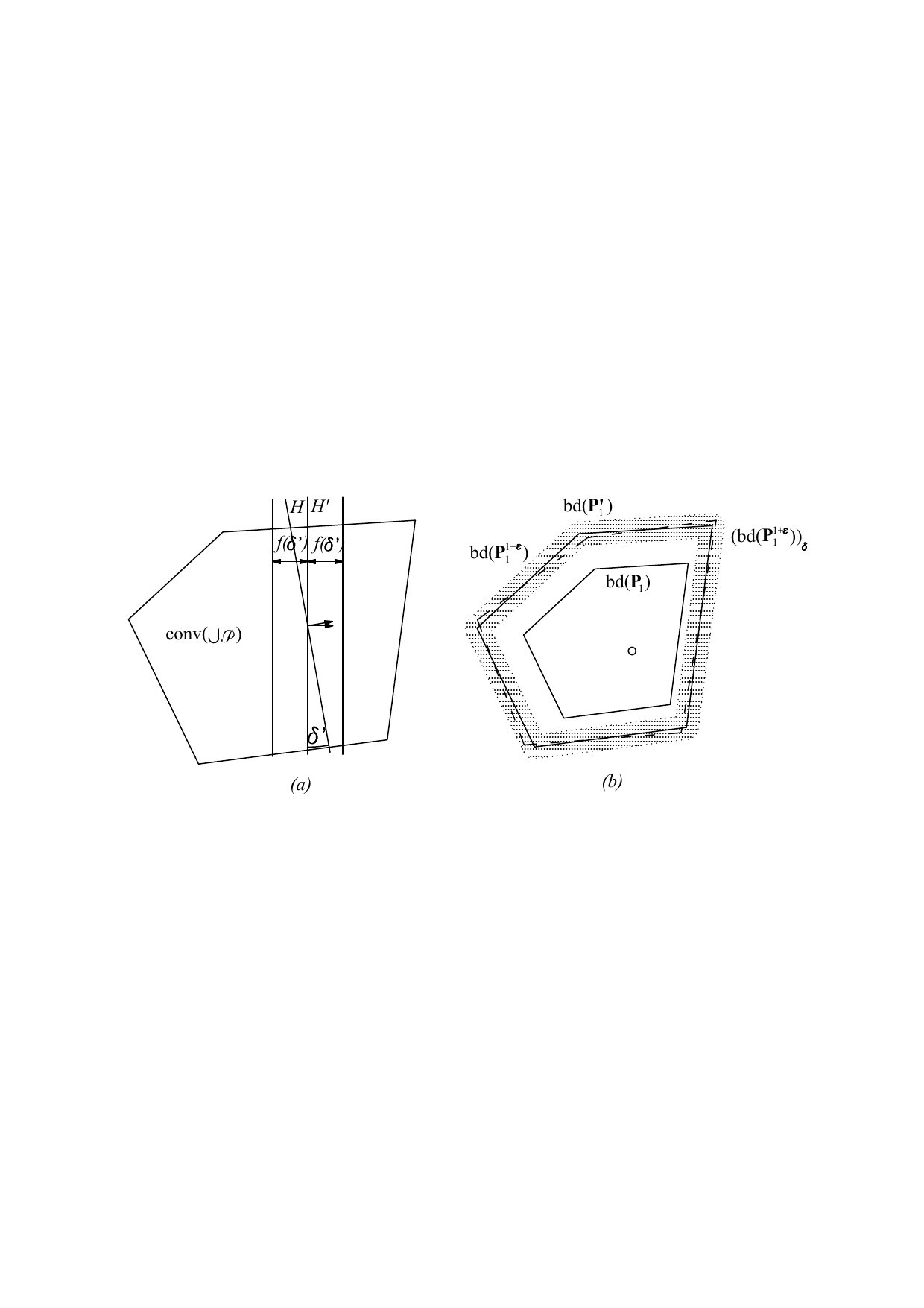}
\caption{An illustration for Proposition~\ref{approximation}. Panel (a): An illustration for the definition of $f(\delta')$. Panel (b): The notation in Proposition~\ref{approximation}. The sets $\bd(\mathbf{P}_1)$ and $\bd(\mathbf{P}_1^{1+\varepsilon})$ are drawn with solid lines. The dotted region indicates the set $\left({\rm bd}\ \mathbf{P}_1^{1+\epsilon}\right)_{\delta}$. The set $\bd(\mathbf{P}_1')$ is drawn with a dashed line.}
\label{fig:WNS approximation}
\end{center}
\end{figure}

\begin{prop}\label{approximation}
For every $\epsilon>0$ consider the convex polytope $\mathbf{P}_1^{1+\epsilon}:=(1+\epsilon)\mathbf{P}_1$. Then there exist a $d$-dimensional generic convex polytope $\mathbf{P}_1'$ and $\delta$ and $\delta'$ with $0<\delta'<\delta<\epsilon$ such that the following holds.
\begin{itemize}
\item[(a)]
There is a one-to-one correspondence between the facets of $\mathbf{P}_1'$ and $\mathbf{P}_1$ with the property that if the facet $F'$ of $\mathbf{P}_1'$ with outer unit normal vector $\mathbf{u}'$ corresponds to the facet $F$ of $\mathbf{P}_1$ with outer unit normal vector $\mathbf{u}$, then $\angle (\mathbf{u}, \mathbf{u}')\leq\delta'$, where $\angle (\mathbf{u}, \mathbf{u}')$ stands for the angle between $\mathbf{u}$ and $\mathbf{u}'$.
\item[(b)]
${\rm bd}\ \mathbf{P}_1'\subset\left({\rm bd}\ \mathbf{P}_1^{1+\epsilon}\right)_{\delta}.$
\item[(c)] $\left({\rm bd}\ \mathbf{P}_1^{1+\epsilon}\right)_{\delta+f(\delta')}\cap\mathbf{P}_1=\emptyset.$
\end{itemize} 
\end{prop}

Finally, for $i=2,\dots , n$ let $\mathbf{P}_i':=\xx_i+\frac{\tau_i}{\tau_1}\mathbf{P}_1'$ be a larger positive homothetic copy of the generic convex polytope $\mathbf{P}_1'$. Based on Proposition~\ref{approximation} it is easy to check that since $\mathcal{P}$ is a WNS-family therefore $\mathcal{P}_{\epsilon}:=\{ \mathbf{P}_i'\ |\  i=1,2,\ldots, n\}$ is a WNS-family as well. Moreover, by introducing the generic convex polytope $\mathbf{P}_{\epsilon}:=\frac{1}{\tau_1}\mathbf{P}_1'$ we see that $\mathcal{P}_{\epsilon}:=\{ \xx_i + \tau_i \mathbf{P}_{\epsilon}\ |\  \xx_i\in \Ee^d, \tau_i>0, i=1,2,\ldots, n\}$. Finally, the observation that $\mathbf{P}_{\epsilon}$ (resp., $\mathcal{P}_{\epsilon}$) converges to $\mathbf{P}$ (resp., $\mathcal{P}$) as $\epsilon\to 0^+$ implies in a straightforward way that $\lim_{\epsilon\to 0^+}\Lambda(\mathcal{P}_{\epsilon})=\Lambda(\mathcal{P})$. This completes the proof of Lemma~\ref{WNS-sup}. \end{proof}

Thus, we are left to prove that  $\Lambda_{\rm simplex}^{(d)}=\sup_{\mathcal{P}'} \Lambda(\mathcal{P}')$ holds, where $\mathcal{P}'$ runs over all WNS-families of finitely many positive homothetic copies of an arbitrary $d$-dimensional generic convex polytope $\mathbf{P}'$ in $\Ee^d$. As a first step, we show the following stronger version of Lutwak's containment theorem \cite{Lut}.

\begin{lem}\label{Lutwak-type}
Let $\mathbf{P}$ be a $d$-dimensional generic convex polytope in $ \Ee^d$ and let $\mathbf{K}$ be a convex body in $ \Ee^d$. Assume that every $d$-dimensional simplex which is the intersection of the supporting halfspaces of some $d+1$ facets of $\mathbf{P}$ contains a translate of $\mathbf{K}$. Then $\mathbf{P}$ contains a translate of $\mathbf{K}$.
\end{lem} 

\begin{proof} We follow the method of \cite{Lut}. Let the outer unit normal vectors of the facets of $\mathbf{P}$ be $\mathbf{u}_1, \mathbf{u}_2, \dots , \mathbf{u}_s$. Moreover, for each $i$ with $1\leq i\leq s$, let $H_i^+:=\{\mathbf{x}\in \Ee^d | \langle \mathbf{x}, \mathbf{u}_i\rangle\leq h_{\mathbf{P}}(\mathbf{u}_i)\}$ be the closed supporting halfspace of $\mathbf{P}$ bounded by the hyperplane of the facet with outer unit normal vector $\mathbf{u}_i$, where $\langle\cdot ,\cdot\rangle$ stands for the standard inner product of  $\Ee^d$ and $h_{\mathbf{P}}(\cdot )$ denotes the support function of $\mathbf{P}$. Furthermore, let $$\mathbf{K}_i:=\{\mathbf{x}\in \Ee^d | \mathbf{x}+\mathbf{K}\subset H_i^+\}.$$ Clearly, each $\mathbf{K}_i$ is a closed halfspace in $ \Ee^d$. Next, let $\mathbf{K}_{i_1}, \mathbf{K}_{i_2},\dots , \mathbf{K}_{i_{d+1}}$ be any $d+1$ members of the family $\{\mathbf{K}_1, \mathbf{K}_2,\dots , \mathbf{K}_s\}$. As any $d$ of the vectors $\mathbf{u}_{i_1}, \mathbf{u}_{i_2}, \dots , \mathbf{u}_{i_{d+1}}$ are linearly independent, there are two possibilities: either that $\bigcap_{j=1}^{d+1}H_{i_j}^+$ contains a translate of a Euclidean ball of radius $r$ for all $r>0$, or that $\bigcap_{j=1}^{d+1}H_{i_j}^+$ is a simplex containing $\mathbf{P}$. In the first case there exists $\mathbf{x}_0\in  \Ee^d$ such that $\mathbf{x}_0+\mathbf{K}\subseteq \bigcap_{j=1}^{d+1}H_{i_j}^+$. In the second case, the hypothesis of Lemma~\ref{Lutwak-type} guarantees an $\mathbf{x}_0\in  \Ee^d$ such that $\mathbf{x}_0+\mathbf{K}\subseteq \bigcap_{j=1}^{d+1}H_{i_j}^+$. Thus, in either case $\mathbf{x}_0\in \bigcap_{j=1}^{d+1}\mathbf{K}_{i_{j}}$, and Helly's theorem (\cite{Sch14}) shows that there exists $\mathbf{x}\in \Ee^d$ such that $\mathbf{x}\in \bigcap_{i=1}^{s}\mathbf{K}_i$. Hence, $\mathbf{x}+\mathbf{K}\subseteq\bigcap_{i=1}^{s}H_i^+=\mathbf{P}$, finishing the proof of Lemma~\ref{Lutwak-type}.
\end{proof}

Now, we proceed with the proof of Theorem~\ref{WNS-simplex bound} as follows. Let $\mathcal{P} := \{ \xx_i + \tau_i \mathbf{P}\ |\  \xx_i\in \Ee^d, \tau_i>0, i=1,2,\ldots, n\}$ be an arbitrary WNS-family of the $d$-dimensional generic convex polytope $\mathbf{P}$ in $\Ee^d$. Moreover, let $\Delta(\mathbf{P})$ be any circumscribed $d$-dimensional simplex of $\mathbf{P}$, that is, let $\Delta(\mathbf{P})$ be a $d$-dimensional simplex which the intersection of the supporting halfspaces of some $d+1$ facets of $\mathbf{P}$. Then $\Lambda_{\rm simplex}^{(d)} ( \sum_{i=1}^n \tau_i ) \Delta(\mathbf{P})$ is a circumscribed simplex of $\Lambda_{\rm simplex}^{(d)} ( \sum_{i=1}^n \tau_i ) \mathbf{P}$ and $\xx_i + \tau_i \Delta(\mathbf{P})$ is a circumscribed simplex of $ \xx_i + \tau_i \mathbf{P}$ for all $i=1,2,\ldots, n$. Furthermore, $\{ \xx_i + \tau_i \Delta(\mathbf{P})\ |\  \xx_i\in \Ee^d, \tau_i>0, i=1,2,\ldots, n\}$ is a WNS-family of the $d$-dimensional simplex $\Delta(\mathbf{P})$ in $\Ee^d$. Hence, $\Lambda_{\rm simplex}^{(d)} ( \sum_{i=1}^n \tau_i ) \Delta(\mathbf{P})$ has a translate that covers $\bigcup \{ \xx_i + \tau_i \Delta(\mathbf{P})\ |\  \xx_i\in \Ee^d, \tau_i>0, i=1,2,\ldots, n\}\supseteq   \bigcup \mathcal{P}$. Thus, $\Lambda_{\rm simplex}^{(d)}  ( \sum_{i=1}^n \tau_i ) \Delta(\mathbf{P})$ has a translate that covers the convex body $\conv\left( \bigcup \{ \xx_i + \tau_i \Delta(\mathbf{P})\ |\  \xx_i\in \Ee^d, \tau_i>0, i=1,2,\ldots, n\} \right)\supseteq   \conv\left(\bigcup \mathcal{P}\right)$. Since $\Lambda_{\rm simplex}^{(d)}  ( \sum_{i=1}^n \tau_i ) \mathbf{P}$ has the same outer unit normal vectors as $\mathbf{P}$ and $\Delta(\mathbf{P})$ was an arbitrary circumscribed $d$-dimensional simplex of $\mathbf{P}$, Lemma~\ref{Lutwak-type} implies in a straghforward way that $\Lambda_{\rm simplex}^{(d)}  ( \sum_{i=1}^n \tau_i ) \mathbf{P}$ has a translate that covers $ \conv\left(\bigcup \mathcal{P}\right)\supseteq\bigcup \mathcal{P}$. This shows that $\Lambda_{\rm simplex}^{(d)}\geq \sup_{\mathcal{P}} \Lambda(\mathcal{P})$, where $\mathcal{P}$ runs over the WNS-families of finitely many positive homothetic copies of an arbitrary $d$-dimensional convex polytope $\mathbf{P}$ in $\Ee^d$. As the inequality $\Lambda_{\rm simplex}^{(d)}\leq \sup_{\mathcal{P}} \Lambda(\mathcal{P})$ holds trivially, the proof of Theorem~\ref{WNS-simplex bound} is complete. \end{proof}

Finally, recall the following elegant result of Akopyan, Balitskiy, and Grigorev \cite{ABG}.

\begin{thm}[Akopyan, Balitskiy, and Grigorev, 2018]\label{ABG-bound-original} 
If $\mathcal{T} := \{ \xx_i + \tau_i \mathbf{T}\ |\  \xx_i\in \Ee^d, \tau_i>0, i=1,2,\ldots, n\}$ is an arbitrary WNS-family of the $d$-dimensional simplex $\mathbf{T}$ in $\Ee^d$, then $\Lambda(\mathcal{T})\leq (d+1)/2$ holds for all $d\geq 2$ and $n\geq 2$. Moreover, the factor $\frac{d+1}{2}$ cannot be improved. 
\end{thm}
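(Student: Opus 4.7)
\emph{Plan.} I would reformulate the covering condition as a linear-programming feasibility problem in $d+1$ inequalities, solve this LP via Farkas' lemma together with Minkowski's relation among the facet normals, and bound the resulting expression using Lemma~\ref{lem:GG} in each facet direction; sharpness would follow from a limiting construction.

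\emph{Setup and LP.} First translate $\mathbf{T}$ so that its centroid is at $\mathbf{o}$, and write its vertices as $\mathbf{v}_1,\ldots,\mathbf{v}_{d+1}$ (satisfying $\sum_j\mathbf{v}_j=\mathbf{o}$) and the opposite outer unit facet normals as $\mathbf{u}_1,\ldots,\mathbf{u}_{d+1}$. Setting $b_k:=h_{\mathbf{T}}(\mathbf{u}_k)$, the fact that each $\mathbf{v}_i$ with $i\neq k$ lies on the $k$-th facet gives $\langle\mathbf{v}_i,\mathbf{u}_k\rangle=b_k$, which combined with $\sum_i\langle\mathbf{v}_i,\mathbf{u}_k\rangle=0$ yields $a_k:=\langle\mathbf{v}_k,\mathbf{u}_k\rangle=-d\,b_k$ and width $w_k=(d+1)b_k$. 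By Minkowski's theorem the facet volumes furnish positive weights $\alpha_1,\ldots,\alpha_{d+1}$ with $\sum_k\alpha_k\mathbf{u}_k=\mathbf{o}$; since any $d$ of the $\mathbf{u}_k$'s are linearly independent, this is the only positive linear dependence. Since $\mathbf{T}=\bigcap_k\{\mathbf{z}:\langle\mathbf{z},\mathbf{u}_k\rangle\le b_k\}$, a translate $\mathbf{y}+\lambda\sigma\mathbf{T}$ with $\sigma:=\sum_i\tau_i$ covers $\bigcup\mathcal{T}$ if and only if $\langle\mathbf{y},\mathbf{u}_k\rangle\ge b_k^{\cup}-\lambda\sigma b_k$ for every $k$, where $b_k^{\cup}:=\max_i(\langle\xx_i,\mathbf{u}_k\rangle+\tau_i b_k)$. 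Farkas' lemma, together with the uniqueness of the above positive dependence, shows that this system is feasible in $\mathbf{y}\in\Ee^d$ exactly when $\sum_k\alpha_k b_k^{\cup}\le\lambda\sigma\sum_k\alpha_k b_k$, so
\[
\Lambda(\mathcal{T})\;=\;\frac{\sum_k\alpha_k b_k^{\cup}}{\sigma\sum_k\alpha_k b_k}.
\]

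\emph{Bounding via Goodman--Goodman.} The WNS hypothesis guarantees that the projection of $\bigcup\mathcal{T}$ onto each $\mathbf{u}_k$ is a single interval; since the projection of $\mathcal{T}_i$ is $[m_{ik}-\tau_iw_k/2,\,m_{ik}+\tau_iw_k/2]$ with $m_{ik}:=\langle\xx_i,\mathbf{u}_k\rangle+\tau_i(a_k+b_k)/2$, Lemma~\ref{lem:GG} applied to these symmetric intervals yields $b_k^{\cup}\le M_k+\sigma w_k/2$ with $M_k=\langle\bar\xx,\mathbf{u}_k\rangle+\rho(a_k+b_k)/2$, where $\bar\xx:=\sum_i\tau_i\xx_i/\sigma$ and $\rho:=\sum_i\tau_i^2/\sigma\in(0,\sigma]$. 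Substituting $a_k+b_k=-(d-1)b_k$ and $w_k=(d+1)b_k$, multiplying by $\alpha_k$ and summing in $k$, the cross-term $\sum_k\alpha_k\langle\bar\xx,\mathbf{u}_k\rangle=\langle\bar\xx,\sum_k\alpha_k\mathbf{u}_k\rangle$ vanishes by the Minkowski relation, leaving
\[
\Lambda(\mathcal{T})\;\le\;\frac{(d+1)-(d-1)\rho/\sigma}{2}\;\le\;\frac{d+1}{2},
\]
using $\rho/\sigma\in(0,1]$.

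\emph{Sharpness and main obstacle.} Since $\rho/\sigma\ge 1/n$ by Cauchy--Schwarz, the estimate only tends to $(d+1)/2$ along sequences of WNS-families with $n\to\infty$ congruent simplices. For sharpness I would exhibit such a sequence in which, in every facet direction simultaneously, the $n$ symmetric projection-intervals are placed end-to-end (so Lemma~\ref{lem:GG} becomes asymptotically tight) and moreover $\bar\xx=\mathbf{o}$ (so the Minkowski cancellation is also tight). The main obstacle is that these $d+1$ ``tiling'' conditions are interlocking geometric constraints that must hold together while the family remains WNS; producing an explicit sequence of WNS arrangements saturating all of them up to $o(1)$ is the combinatorial-geometric core of the sharpness statement, and is precisely the explicit construction carried out by Akopyan, Balitskiy and Grigorev in~\cite{ABG}, which I would invoke to conclude that the factor $(d+1)/2$ cannot be improved.
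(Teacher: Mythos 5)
Your proposal is correct, and it takes a genuinely different route from the paper. To put this in context: the paper does not prove Theorem~\ref{ABG-bound-original} directly but cites \cite{ABG}; the closest in-paper argument is Theorem~\ref{strongerABG}, which yields the bound because $\sigma(\mathbf{T})=d$ for a simplex $\mathbf{T}$, and that proof (following \cite{ABG}) fixes the weighted centroid $\bar{\xx}=\sum_i\tau_i\xx_i/\sum_i\tau_i$ as the covering center, assumes a violating point exists, takes a separating hyperplane parallel to a facet, and applies Lemma~\ref{lem:GG} in that single direction to reach a contradiction — all phrased in terms of the Minkowski measure of asymmetry so that the argument works for arbitrary convex polytopes.

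Your route instead exploits the simplex structure directly: because the only positive linear dependence among the $d+1$ facet normals is the Minkowski one, the covering condition reduces to a one-parameter Farkas condition, giving the exact value $\Lambda(\mathcal{T})=\sum_k\alpha_k b_k^{\cup}/(\sigma\sum_k\alpha_k b_k)$ rather than merely an upper bound. You then apply Lemma~\ref{lem:GG} in all $d+1$ facet directions and aggregate with the Minkowski weights; the linear term cancels and the remainder gives $\Lambda(\mathcal{T})\leq\tfrac{1}{2}\bigl((d+1)-(d-1)\rho/\sigma\bigr)$, which is strictly below $(d+1)/2$ for finite families. This is a bonus: you obtain a quantitative refinement (in terms of $\rho/\sigma=\sum\tau_i^2/(\sum\tau_i)^2$) that the paper's argument does not make explicit, and it correctly shows the extremal constant is only approached as $n\to\infty$ with nearly equal homothety ratios. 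What the paper's route buys in exchange is generality: by passing through $\sigma(\mathbf{P})$ it covers arbitrary convex polytopes in one stroke (Theorem~\ref{strongerABG}), whereas the Farkas step in your argument relies on the uniqueness of the positive dependence, which is special to simplices. Both approaches defer the sharpness construction to \cite{ABG}, which is reasonable; your observation that the Goodman--Goodman lemma must be simultaneously tight in all $d+1$ directions with $\bar\xx=\mathbf{o}$ correctly identifies the geometric constraints that the explicit extremal families must satisfy.

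One small point worth making explicit in a write-up: the Farkas characterization requires the numerator $\sum_k\alpha_k b_k^{\cup}$ to be nonnegative so that the minimal feasible $\lambda$ is well defined; this holds since each $\xx_i+\tau_i\mathbf{T}$ is contained in the family, so $b_k^{\cup}\ge h_{\xx_1+\tau_1\mathbf{T}}(\mathbf{u}_k)$ and a valid (large) $\lambda$ always exists. This is implicit but harmless.
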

Clearly,  Theorem~\ref{ABG-bound-original} implies the inequality $\Lambda_{\rm simplex}^{(d)}\leq (d+1)/2$ which together with Theorem~\ref{WNS-simplex bound} yield 
\begin{thm}\label{ABG combined with WNS-simplex bound}
If $\mathcal{P} := \{ \xx_i + \tau_i \mathbf{P}\ |\  \xx_i\in \Ee^d, \tau_i>0, i=1,2,\ldots, n\}$ is an arbitrary WNS-family of the $d$-dimensional convex polytope $\mathbf{P}$ in $\Ee^d$, then $\Lambda(\mathcal{P})\leq (d+1)/2$ holds for all $d\geq 2 $ and $n\geq 2$.
\end{thm}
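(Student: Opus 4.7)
The plan is to derive Theorem~\ref{ABG combined with WNS-simplex bound} as an immediate corollary of the two preceding results. The first step is to specialize Theorem~\ref{ABG-bound-original}: since that result provides the uniform bound $\Lambda(\mathcal{T}) \leq (d+1)/2$ over \emph{every} WNS-family $\mathcal{T}$ of positive homothetic $d$-simplices, taking the supremum on the left-hand side in accordance with Definition~\ref{WNS-simplex} yields $\Lambda_{\rm simplex}^{(d)} \leq (d+1)/2$.

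The second step is to invoke Theorem~\ref{WNS-simplex bound}, which identifies $\Lambda_{\rm simplex}^{(d)}$ with the supremum of $\Lambda(\mathcal{P}')$ taken over all WNS-families of positive homothetic copies of arbitrary $d$-dimensional convex polytopes in $\Ee^d$. Chaining the two bounds, for the WNS-family $\mathcal{P} = \{\xx_i + \tau_i \mathbf{P} \mid i=1,\ldots,n\}$ in the statement, we conclude
\[
\Lambda(\mathcal{P}) \;\leq\; \sup_{\mathcal{P}'} \Lambda(\mathcal{P}') \;=\; \Lambda_{\rm simplex}^{(d)} \;\leq\; \frac{d+1}{2},
\]
which is exactly the asserted inequality.

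There is no genuine obstacle at this stage: all the substantive work has already been carried out. The polytope-to-simplex reduction is handled by Theorem~\ref{WNS-simplex bound}, whose proof first approximates an arbitrary $d$-polytope by generic ones (so that every facet-tuple of size $d$ has linearly independent outer normals) and then uses the Lutwak-type containment Lemma~\ref{Lutwak-type} to pass from circumscribed simplices back to the polytope. The numerical content $(d+1)/2$ is supplied by Theorem~\ref{ABG-bound-original} of Akopyan, Balitskiy, and Grigorev. Accordingly, the theorem should be presented simply as the clean polytope-level corollary obtained by composing these two results, with no further argument required.
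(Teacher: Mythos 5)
Your proposal is correct and matches the paper's own derivation exactly: the paper also obtains $\Lambda_{\rm simplex}^{(d)}\leq (d+1)/2$ by specializing Theorem~\ref{ABG-bound-original} and then combines it with the identity $\Lambda_{\rm simplex}^{(d)}=\sup_{\mathcal{P}}\Lambda(\mathcal{P})$ from Theorem~\ref{WNS-simplex bound}. No further comment is needed.
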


\section{Stability of minimal coverings of WNS-families}

In this section we prove a stability version of Theorem~\ref{ABG combined with WNS-simplex bound}. In order to state it, recall that for convex bodies $\mathbf{K}, \mathbf{L}\in \Ee^d$, their Banach-Mazur distance $d_{BM}(\mathbf{K}, \mathbf{L})$ is defined by 
$$
d_{BM}(\mathbf{K}, \mathbf{L}):=\inf\{\lambda>0\ |\ \mathbf{K}+\mathbf{u}\subseteq T(\mathbf{L}+\mathbf{v})\subseteq \lambda(\mathbf{K}+\mathbf{u}\},
$$
where the infimum is taken over all invertible linear operators $T: \Ee^d\to \Ee^d$ and all vectors $\mathbf{u}, \mathbf{v}\in \Ee^d$.

\begin{thm}\label{thm:WNSstability}
Let $0 < \varepsilon < \frac{1}{16(1+d)}$. Let $\mathcal{P} := \{ \xx_i + \tau_i \mathbf{P}\ |\  \xx_i\in \Ee^d, \tau_i>0, i=1,2,\ldots, n\}$ be a WNS-family of positive homothetic copies of a $d$-dimensional convex polytope $\mathbf{P} \subset \Ee^d$. Assume that the smallest positive homothetic copy of $\mathbf{P}$ covering $\bigcup \mathcal{P}$ has homothety ratio greater than or equal to $\left( \frac{d+1}{2} - \varepsilon \right) \sum_{i=1}^n \tau_i$. Then $d_{BM}(\mathbf{P},\mathbf{S}) \leq 1+16(d+1) \varepsilon$ holds for all $d$-dimensional simplices $\mathbf{S}\subset \Ee^d$.
\end{thm}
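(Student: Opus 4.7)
The plan is to carry out the proof of Theorem~\ref{WNS-simplex bound} in a quantitative fashion, pinpointing a specific circumscribed simplex of $\mathbf{P}$ at which the ABG bound is near-tight, and then extracting Banach--Mazur proximity from this near-tightness. First, the polytope approximation underlying Lemma~\ref{WNS-sup} is continuous both in $d_{BM}(\cdot,\cdot)$ and in $\lambda^*$, so we may assume $\mathbf{P}$ is a generic $d$-dimensional convex polytope in $\Ee^d$.

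Apply Lemma~\ref{Lutwak-type} with $\mathbf{K}:=\conv(\bigcup\mathcal{P})$ to the polytope $\lambda\sum_i\tau_i\,\mathbf{P}$: the minimal $\lambda$ such that a translate of $\lambda\sum_i\tau_i\,\mathbf{P}$ contains $\mathbf{K}$ equals the maximum, over circumscribed simplices $\Delta$ of $\mathbf{P}$, of the smallest ratio $\mu_\Delta$ such that a translate of $\mu_\Delta\sum_i\tau_i\,\Delta$ contains $\mathbf{K}$. Fix a circumscribed simplex $\Delta^*$ attaining this maximum, so $\mu_{\Delta^*}=\lambda^*$. The family $\{\xx_i+\tau_i\Delta^*\}$ is a WNS-family of simplices (as in the proof of Theorem~\ref{WNS-simplex bound}), so Theorem~\ref{ABG-bound-original} gives $\mu_{\Delta^*}\leq (d+1)/2$; combined with the hypothesis, $\mu_{\Delta^*}\in\bigl[(d+1)/2-\varepsilon,\,(d+1)/2\bigr]$.

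The crux is to convert this near-extremality into Banach--Mazur proximity. Set $\delta:=d_{BM}(\mathbf{P},\Delta^*)-1\geq 0$. If $\delta>0$, then $\mathbf{P}$ contains no translate of $(1+\delta)^{-1}\Delta^*$, and a support-function analysis produces a facet-normal direction $\mathbf{u}$ of $\Delta^*$ in which $h_{\Delta^*}(-\mathbf{u})-h_{\mathbf{P}}(-\mathbf{u})$ is at least a constant times $\delta\cdot \diam(\Delta^*)$. Projecting $\bigcup\mathcal{P}$ along $\mathbf{u}$ (where WNS applies) and invoking Lemma~\ref{lem:GG} shows that $\mathbf{K}$ is strictly narrower in direction $\mathbf{u}$ than $\conv(\bigcup\{\xx_i+\tau_i\Delta^*\})$ by a proportional amount. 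A quantitative re-examination of the ABG covering construction---tracking how each of the $d+1$ facet widths of $\Delta^*$ contributes to the bound $(d+1)/2$---then produces a translate of $\bigl((d+1)/2-\delta/[16(d+1)]\bigr)\sum_i\tau_i\,\Delta^*$ that already covers $\mathbf{K}$. Comparison with $\mu_{\Delta^*}\geq (d+1)/2-\varepsilon$ forces $\delta\leq 16(d+1)\varepsilon$. Since all $d$-simplices are affinely equivalent, $d_{BM}(\mathbf{P},\mathbf{S})=d_{BM}(\mathbf{P},\Delta^*)$ for every simplex $\mathbf{S}$, and the theorem follows.

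The main obstacle is the ``quantitative re-examination of ABG'' in the third paragraph: converting an $O(\delta)$ support-function shortfall in a single facet-normal direction of $\Delta^*$ into an $O(\delta/(d+1))$ reduction of the covering ratio with the explicit constant $1/[16(d+1)]$. This requires opening up the Akopyan--Balitskiy--Grigorev proof rather than using Theorem~\ref{ABG-bound-original} as a black box, since the bound $(d+1)/2$ is conjecturally not sharp and one must see exactly where the slack lives in order to collect it.
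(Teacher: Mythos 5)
Your proposal does not close the argument: you acknowledge yourself that the crux step, the ``quantitative re-examination of ABG,'' is an open obstacle, and indeed as sketched it does not go through. The difficulty is not just bookkeeping. You fix the circumscribed simplex $\Delta^*$ at which the $\Delta$-covering ratio $\mu_\Delta$ of $\conv(\bigcup\mathcal P)$ is maximized, and you want to argue that if $d_{BM}(\mathbf P,\Delta^*)=1+\delta$ with $\delta$ large, then $\mu_{\Delta^*}$ drops by an amount proportional to $\delta/(d+1)$. But $\mu_{\Delta^*}$ is the circumradius of $\mathbf K=\conv(\bigcup\mathcal P)$ in the $\Delta^*$-norm, and $\mathbf K$ depends only on the data $\{\xx_i,\tau_i\}$ and $\mathbf P$ --- there is no mechanism in your outline to see why a shortfall $h_{\Delta^*}(-\mathbf u)-h_{\mathbf P}(-\mathbf u)\gtrsim\delta$ in a \emph{single} direction translates into a uniform reduction of the $\Delta^*$-covering ratio of $\mathbf K$ (the ratio is a max over all facet normals of $\Delta^*$, and the translation freedom re-centers for each). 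Moreover the circumscribed simplex attaining the max need not be one close to $\mathbf P$: if $\mathbf P$ has many facets, $\Delta^*$ is determined by covering considerations for $\mathbf K$, not by Banach--Mazur proximity to $\mathbf P$. So the whole contradiction route ``$\delta$ large $\Rightarrow$ covering ratio small'' is not established, and it's not clear it can be made to work with this choice of $\Delta^*$.

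The paper takes a different, much shorter route that sidesteps all of this. It proves a version of the ABG bound with the exact constant replaced by the Minkowski measure of asymmetry: Theorem~\ref{strongerABG} shows that a translate of $\frac{\sigma(\mathbf P)+1}{2}\left(\sum_i\tau_i\right)\mathbf P$ covers $\bigcup\mathcal P$. The hypothesis of Theorem~\ref{thm:WNSstability} then forces $\frac{\sigma(\mathbf P)+1}{2}\geq\frac{d+1}{2}-\varepsilon$, i.e.\ $\sigma(\mathbf P)\geq d-2\varepsilon$, and Guo's stability theorem (Theorem~\ref{thm:Guo}) for the Minkowski measure of asymmetry, applied with $\varepsilon'=2\varepsilon$, gives $d_{BM}(\mathbf P,\mathbf S)\leq 1+8(d+1)\cdot 2\varepsilon=1+16(d+1)\varepsilon$ (the condition $\varepsilon<\frac{1}{16(1+d)}$ is exactly what Guo's theorem needs after the doubling). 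The key idea you are missing is to quantify ABG via $\sigma(\mathbf P)$ rather than via the Banach--Mazur distance to a specific circumscribed simplex; that converts the problem into a one-parameter stability question ($\sigma$ close to $d$) for which a ready-made tool (Guo) exists. Your proposed route, by contrast, would require a genuinely new quantitative analysis of the ABG construction, which you have not supplied.
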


For the proof we use the following parameter of asymmetry.

\begin{defn}\label{defn:Minkowski}
Let $\mathbf{K} \subset \Ee^d$ be a convex body. The quantity
\[
\sigma(\mathbf{K}) := \min_{\mathbf{q} \in \inter (\mathbf{K}) } \min \left\{ \mu > 0 \ |\  \mathbf{K}-\mathbf{q} \subseteq - \mu (\mathbf{K}-\mathbf{q}) \right\}
\]
is called the \emph{Minkowski measure of asymmetry} of $\mathbf{K}$.
\end{defn}

It is well known (Lemma 2.2 in \cite{ABG}) that for any convex body $\mathbf{K} \subset \Ee^d$, $\sigma(\mathbf{K}) \leq d$. The following stronger form of this observation is proved by Guo in \cite[Theorem A]{Guo}.

\begin{thm}[Guo, 2005]\label{thm:Guo}
Let $0 < \varepsilon < \frac{1}{8(1+d)}$. Let $\mathbf{K} \subset \Ee^d$ be a convex body. If $\sigma(\mathbf{K}) \geq d-\varepsilon$, then $d_{BM}(\mathbf{K}, \mathbf{S}) \leq 1+8(d+1) \varepsilon$ holds for all $d$-dimensional simplices $\mathbf{S}\subset \Ee^d$.
\end{thm}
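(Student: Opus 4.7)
The plan is to reduce Theorem~\ref{thm:WNSstability} to Theorem~\ref{thm:Guo} by controlling the Minkowski asymmetry of $\mathbf{P}$. Recall that Theorem~\ref{thm:Guo} translates a near-extremal asymmetry ($\sigma(\mathbf{P})$ close to $d$) into a near-simplex bound in Banach--Mazur distance. Thus it suffices to prove the refined covering inequality
\[
\Lambda(\mathcal{P}) \leq \frac{1+\sigma(\mathbf{P})}{2},
\]
valid for every WNS-family $\mathcal{P}$ of positive homothetic copies of a $d$-dimensional convex polytope $\mathbf{P}$, and then combine it with the hypothesis.

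First I would establish the refined inequality above. This is a natural strengthening of Theorem~\ref{ABG combined with WNS-simplex bound}: indeed, substituting the trivial bound $\sigma(\mathbf{P}) \le d$ recovers the $(d+1)/2$ estimate, and the proof of Theorem~\ref{WNS-simplex bound} already reduces the question to circumscribed simplices via Lemma~\ref{Lutwak-type}. The remaining content is to revisit the argument of Akopyan, Balitskiy and Grigorev in \cite{ABG} and extract a dependence on $\sigma(\mathbf{P})$ rather than on the generic bound $d$. Concretely, one picks a center $\mathbf{q} \in \inter(\mathbf{P})$ achieving the minimum in Definition~\ref{defn:Minkowski}, so that $-(\mathbf{P}-\mathbf{q}) \subseteq \sigma(\mathbf{P})\,(\mathbf{P}-\mathbf{q})$; following the ABG strategy, one then replaces each facet-parallel halfspace-covering estimate by one that compares $h_{\mathbf{P}}(\mathbf{u})$ with $h_{\mathbf{P}}(-\mathbf{u})$ using $\sigma(\mathbf{P})$, and the one-dimensional Lemma~\ref{lem:GG} applied facet-by-facet produces the factor $(1+\sigma(\mathbf{P}))/2$.

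Once the refined inequality is available, the hypothesis that the smallest homothetic covering has ratio $\ge (\tfrac{d+1}{2}-\varepsilon)\sum_{i=1}^n \tau_i$ directly forces
\[
\frac{d+1}{2}-\varepsilon \;\le\; \Lambda(\mathcal{P}) \;\le\; \frac{1+\sigma(\mathbf{P})}{2},
\]
hence $\sigma(\mathbf{P}) \ge d - 2\varepsilon$. The assumption $\varepsilon<\tfrac{1}{16(d+1)}$ gives $2\varepsilon<\tfrac{1}{8(d+1)}$, which is exactly the admissible range required to feed $2\varepsilon$ into Theorem~\ref{thm:Guo} in place of its $\varepsilon$. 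That theorem then yields
\[
d_{BM}(\mathbf{P},\mathbf{S}) \;\le\; 1+8(d+1)\cdot 2\varepsilon \;=\; 1+16(d+1)\,\varepsilon,
\]
for every $d$-simplex $\mathbf{S}\subset \Ee^d$, which is the desired conclusion; the matching factor $16=2\cdot 8$ and the matching cutoff $\tfrac{1}{16(d+1)}=\tfrac{1}{2\cdot 8(d+1)}$ confirm that this is the intended route.

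The main obstacle is verifying the refined covering inequality $\Lambda(\mathcal{P}) \le (1+\sigma(\mathbf{P}))/2$. The ABG proof leans on the fact that a simplex equals the intersection of exactly $d+1$ halfspaces, so isolating a sharper asymmetry-sensitive version requires care — particularly in choosing the asymmetry center $\mathbf{q}$ inside $\mathbf{P}$ and in tracking how the homothetic factor propagates through the reduction from arbitrary generic polytopes to their circumscribed simplices provided by Lemma~\ref{Lutwak-type}. Everything after that step is bookkeeping with constants.
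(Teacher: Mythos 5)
Your proposal does not prove the statement it was assigned. The statement is Theorem~\ref{thm:Guo} itself --- Guo's 2005 stability result asserting that a convex body $\mathbf{K}$ with Minkowski measure of asymmetry $\sigma(\mathbf{K})\geq d-\varepsilon$ must satisfy $d_{BM}(\mathbf{K},\mathbf{S})\leq 1+8(d+1)\varepsilon$ for every $d$-simplex $\mathbf{S}$. In the paper this is an external result, quoted from \cite[Theorem A]{Guo} with no proof supplied; what a genuine proof requires is an argument that near-maximal asymmetry forces near-simplicial shape, e.g.\ by analyzing the critical set of the asymmetry functional, extracting $d+1$ pairs of parallel supporting hyperplanes realizing the ratio $\sigma(\mathbf{K})$, and showing that the simplex they cut out is sandwiched between $\mathbf{K}$ and a $(1+8(d+1)\varepsilon)$-dilate of it. Your text contains none of this.

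What you have written instead is an outline of the proof of Theorem~\ref{thm:WNSstability}: you take Theorem~\ref{thm:Guo} as a black box, combine it with the refined covering inequality $\Lambda(\mathcal{P})\leq\frac{1+\sigma(\mathbf{P})}{2}$ (which is the paper's Theorem~\ref{strongerABG}), and do the constant bookkeeping $2\varepsilon<\frac{1}{8(d+1)}$, $1+8(d+1)\cdot 2\varepsilon=1+16(d+1)\varepsilon$. That chain of reasoning is essentially the paper's own derivation of Theorem~\ref{thm:WNSstability}, and the bookkeeping is correct --- but as an argument for Theorem~\ref{thm:Guo} it is circular, since it invokes the very statement to be proved. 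To repair this you would need to either reproduce Guo's argument or explicitly defer to the citation; the reduction you describe cannot substitute for it.
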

Clearly, Theorem~\ref{thm:WNSstability} follows from Theorem~\ref{thm:Guo} and the following strengthening of Theorem 2.1 from \cite{ABG}.

\begin{thm}\label{strongerABG}
Let $\mathbf{P} \subset \Ee^d$ be a convex polytope. Let $\mathcal{P} := \{ \xx_i + \tau_i \mathbf{P}\ |\  \xx_i\in \Ee^d, \tau_i>0, i=1,2,\ldots, n\}$ be a WNS-family of homothetic copies of $\mathbf{P}$. Then there is a translate of $\frac{\sigma(\mathbf{P})+1}{2} \left( \sum_{i=1}^n \tau_i\right) \mathbf{P}$ that covers $\bigcup \mathcal{P}$.
\end{thm}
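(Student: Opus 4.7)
The strategy is to adapt the proof of Akopyan--Balitskiy--Grigorev's Theorem~2.1 in \cite{ABG} by systematically tracking the Minkowski measure of asymmetry $\sigma(\mathbf{P})$ through the argument in place of the dimension $d$: the bound $(d+1)/2$ they obtain for simplices is precisely the specialization of $(\sigma(\mathbf{P})+1)/2$ via the identity $\sigma(\mathbf{S})=d$ for a $d$-simplex $\mathbf{S}$, and the proof runs in exactly the same way with $\sigma(\mathbf{P})$ left as a parameter.

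First I would translate $\mathbf{P}$ so that its Minkowski center lies at the origin; this yields the one-sided asymmetry bound $h_\mathbf{P}(-\mathbf{u})\le \sigma(\mathbf{P})\,h_\mathbf{P}(\mathbf{u})$ for every unit vector $\mathbf{u}$. Writing $R=\sum_{i=1}^n\tau_i$ and $\lambda=(\sigma(\mathbf{P})+1)/2$, and using that $\mathbf{P}$ is a polytope, the desired inclusion $\bigcup\mathcal{P}\subseteq \mathbf{y}^*+\lambda R\,\mathbf{P}$ is equivalent to the finite system of support-function inequalities
\[
h_{\bigcup\mathcal{P}}(\mathbf{u})-\langle \mathbf{y}^*,\mathbf{u}\rangle\le \lambda R\,h_\mathbf{P}(\mathbf{u})
\]
ranging over the outer facet normals $\mathbf{u}$ of $\mathbf{P}$. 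For each such $\mathbf{u}$, the WNS-hypothesis asserts that the projection of $\bigcup\mathcal{P}$ onto the $\mathbf{u}$-axis is a single closed interval, so, setting $a_i=\langle\xx_i,\mathbf{u}\rangle$, $r=h_\mathbf{P}(\mathbf{u})$, $\ell=h_\mathbf{P}(-\mathbf{u})$ and ordering the intervals $[a_i-\tau_i\ell,\,a_i+\tau_i r]$ by left endpoint, the chain relations $a_{i+1}-\tau_{i+1}\ell\le a_i+\tau_i r$ telescope to the two-sided length estimate $\max_i(a_i+\tau_i r)-\min_i(a_i-\tau_i\ell)\le R(r+\ell)$.

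The task then reduces to producing a single $\mathbf{y}^*\in\Ee^d$ so that the one-sided support-function inequality above holds at every facet normal. I would take $\mathbf{y}^*$ to be the $\tau$-weighted centroid $R^{-1}\sum_i\tau_i\xx_i$ (the same choice that drives the proofs of Lemma~\ref{lem:GG} and Theorem~\ref{centrally symmetric convex bodies revisited}), which yields the zero-mean identity $\sum_i\tau_i\langle\xx_i-\mathbf{y}^*,\mathbf{u}\rangle=0$ in every direction. Combining this normalization with the chain inequalities and the asymmetry bound $\ell\le \sigma(\mathbf{P})r$ in a direction-by-direction extremal analysis then converts the total-length estimate $R(r+\ell)$ into the desired one-sided estimate with factor $\lambda=(\sigma(\mathbf{P})+1)/2$, rather than the naive factor $1+\sigma(\mathbf{P})$ one would obtain by simply splitting the total length between the $r$-side and the $\ell$-side.

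The main obstacle is precisely this one-dimensional extremal analysis. In the $\mathbf{o}$-symmetric subcase $\ell=r$, it reduces to Lemma~\ref{lem:GG} and already gives $\lambda=1$; in the asymmetric case one must more delicately exploit the weighted-centroid normalization to recover the factor-of-two gain that is lost by the crude total-length split. This is exactly the content of the one-dimensional step behind \cite[Theorem~2.1]{ABG} for simplices, and the same computation carries through with $\sigma(\mathbf{P})$ in place of $d$. An equivalent, perhaps cleaner, formulation is Helly-theoretic: the existence of a suitable $\mathbf{y}^*$ is the non-emptiness of the intersection of the halfspaces
\[
H(\mathbf{u})=\{\mathbf{y}\in\Ee^d\,:\,\langle \mathbf{y},\mathbf{u}\rangle\ge h_{\bigcup\mathcal{P}}(\mathbf{u})-\lambda R\,h_\mathbf{P}(\mathbf{u})\}
\]
indexed by the facet normals of $\mathbf{P}$, and the required $(d+1)$-wise intersection property of these halfspaces can be verified by applying the chain argument simultaneously in any $d+1$ facet normal directions using the Minkowski-center inequality.
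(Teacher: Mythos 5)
Your proposal follows essentially the same route as the paper: both normalize $\mathbf{P}$ at its Minkowski center (you via $h_{\mathbf{P}}(-\mathbf{u})\le\sigma h_{\mathbf{P}}(\mathbf{u})$, the paper via the equivalent polar inclusion $\mathbf{P}^\circ\subseteq-\sigma\mathbf{P}^\circ$), take the $\tau$-weighted centroid as the center of the covering homothet, reduce to one-dimensional statements along the facet normal directions using that $\mathbf{P}$ is a polytope, and invoke the Goodman--Goodman interval lemma together with the elementary inequalities from \cite[Theorem~2.1]{ABG} to close the one-dimensional step. Your Helly-theoretic reformulation is an optional extra; the direct centroid choice already suffices, exactly as in the paper.
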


\begin{proof}
We follow the proof of Theorem 2.1 in \cite{ABG}. First, we remark that it is an elementary exercise to show that for any convex body $\mathbf{K} \subset \Ee^d$ with $\mathbf{o}\in{\rm int}\ \mathbf{K}$,
\[
\sigma (\mathbf{K})= \min_{\mathbf{q} \in \inter (\mathbf{K}) } \min \left\{ \mu > 0\ |\  \left( \mathbf{K}-\mathbf{q} \right)^\circ \subseteq -\mu \left( \mathbf{K}-\mathbf{q} \right)^\circ \right\},
\]
where $\mathbf{Q}^\circ:=\{\mathbf{x}\in \Ee^d\ |\ \langle\mathbf{x},\mathbf{y}\rangle\leq 1 \ {\rm for \ all}\ \mathbf{y}\in\mathbf{Q}\}$ denotes the polar of the convex body $\mathbf{Q}$ with $\mathbf{o}\in{\rm int}\ \mathbf{Q}$. Based on this observation, by shifting the origin $\mathbf{o}$ we may assume that $\mathbf{P}^\circ \subseteq -\sigma \mathbf{P}^\circ$, where $\sigma := \sigma(\mathbf{P})$.

Let $\xx:= \frac{\sum_{i=1}^n \tau_i \xx_i}{ \sum_{i=1}^n \tau_i}$. Consider the homothetic copy $\xx +\frac{\sigma+1}{2} \left( \sum_{i=1}^n \tau_i\right) \mathbf{P}$. We show that it covers $\bigcup \mathcal{P}$. Suppose for contradiction that it does not. In this case there is a hyperplane $H$ strictly separating a point $\mathbf{p} \in \conv \left( \bigcup \mathcal{P} \right)$ from $\xx +\frac{\sigma+1}{2} \left( \sum_{i=1}^n \tau_i\right) \mathbf{P}$. Since $\mathbf{P}$ is a convex polytope, we may assume that $H$ is parallel to a facet of $\mathbf{P}$.

Let $\pi: \Ee^d \to L$ be the orthogonal projection onto a line $L$ perpendicular to $H$. Assume that the projection of $\mathbf{o}$ divides the projection $\pi(\mathbf{P})$ in the ratio $1:s$, where $s \geq 1$. As $\mathbf{P}^\circ \subseteq -\sigma \mathbf{P}^\circ$, we have $s \in [1,\sigma]$. 
In the following, we regard the line $L$ as the real line $\Re$. Let $[a_i,b_i] := \pi(\xx_i + \tau_i \mathbf{P})$, $c_i := \pi(\xx_i)$, $l_i := b_i-a_i$, and $L:=\sum_{i=1}^n l_i$. Observe that with this notation we have (using a suitable orientation of $L$) that the $l_i$ are proportional to the $\tau_i$, and $s(c_i-a_i)=b_i-c_i$. In addition, let $c:=\pi\left(\xx\right)=\pi\left(\frac{\sum_{i=1}^n l_i c_i}{ \sum_{i=1}^n l_i}\right)$ and $I=[a,b] := \pi \left( \xx +\frac{\sigma+1}{2} \left( \sum_{i=1}^n \tau_i\right) \mathbf{P}\right)$. Note that $I$ is of length $\frac{\sigma+1}{2}L$, and it is divided by $c$ in the ratio $1:s$.

Let $c_i':=\frac{a_i+b_i}{2}$. Since $\mathcal{P}$ is a WNS-family, the union of the segments $[a_i,b_i]$ is a closed interval in $\Re$. Thus the segment $I':=[a',b']$ of length $L$ and midpoint $c':=\frac{\sum_{i=1}^n l_i c_i'}{L}$ covers $\bigcup [a_i,b_i]=\pi\left(\mathcal{P}\right)$ (see \cite[Lemma]{GG45}, or \cite[Lemma 3]{BezLan16} or \cite[Lemma 1.2]{ABG}). Then it follows through a sequence of elementary inequalities that $I' \subseteq I$ (see the proof of \cite[Theorem 2.1]{ABG}), which is a contradiction since $\pi(\mathbf{p})\in I'$ and $\pi(\mathbf{p})\notin I$.
\end{proof}

\section{On translative WNS-families of cubes}\label{sec:linfty}

In this section we intend to investigate isoperimetric-type problems for translative WNS-families of the unit cube $\C_d := [0,1]^d$ in $\Ee^d$.
During this investigation, we call a family $\mathcal{F}$ of translates of $\C_d$ an \emph{integer family} if every vertex of every member of $\mathcal{F}$ is a point of the integer grid $\mathbb{Z}^d$.
Let $\emptyset\neq A\subset\Ee^d$ be a compact convex set, and $1\leq i\leq d$. We denote the {\it $i$-th} intrinsic volume of $A$ by $V_i(A)$. It is well known that $V_d(A)$ is the $d$-dimensional 
volume of $A$, $2V_{d-1}(A)$ is the surface area of $A$, and $\frac{2\omega_{d-1}}{d\omega_d}V_1(A)$ is equal to the mean width of $A$, where $\omega_d:=V_d(\mathbf{B}^d)=\frac{\pi^{\frac{d}{2}}}{\Gamma(1+\frac{d}{2})}$ with $\mathbf{B}^d:=\{\mathbf{x}\in\Ee^d \  | \|\mathbf{x}\|\leq 1\}$. For properties of intrinsic volumes, including Steiner's formula
$V_d\left(A+\epsilon\mathbf{B}^d\right)=\sum_{i=0}^{d}\omega_{d-i}V_i(A)\epsilon^{d-i}$, where $\epsilon >0$ and $V_0(A):=1$, we refer the interested reader to \cite{Sch14}. Our main result is the following. 

\begin{thm}\label{thm:WNScubes}
Let $n \geq 1$, and $1 \leq i \leq d$ be integers. Then the following holds.
\begin{itemize}
\item[(i)] There is a WNS-family $\mathcal{F}$ of $n$ translates of $\C_d$ that maximizes $V_i\left( \conv \left( \bigcup \mathcal{F}\right) \right)$ over the set of all WNS-families of $n$ translates of $\C_d$ such that the smallest axis-parallel box containing $\bigcup \mathcal{F}$ is $n \C_d$.
\item[(ii)] For every $n \geq 4$ and every WNS-family $\mathcal{F}$ of $n$ translates of $\C_2$, we have
\[
\area\left( \conv \left( \bigcup \mathcal{F}\right) \right)  \leq n^2 - 2n+4, \quad \hbox{and} \quad \perim\left( \conv \left( \bigcup \mathcal{F}\right) \right)  \leq 4+ 4\sqrt{n^2 - 4n+5},
\]
where $\area(\cdot)$ and $\perim(\cdot)$ refer to the area and perimeter of the given sets, respectively.
\end{itemize}
\end{thm}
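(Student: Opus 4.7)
The plan is to prove (i) by a standard compactness argument and (ii) by a direct geometric analysis of the convex hull of a planar WNS family of unit squares. For (i), after translating so that the bounding box is anchored at $\o$, the admissible positions $(\x_1,\ldots,\x_n)$ lie in a bounded subset of $(\Ee^d)^n$. The WNS condition is closed (its failure requires a strictly separating axis-aligned hyperplane, which is an open condition), and the requirement that the smallest axis-parallel bounding box equal $n\C_d$ is closed (it is an equality on coordinates of the extremal cubes). Hence the admissible set is compact, and since $V_i(\conv\bigcup\mathcal{F})$ depends continuously on the configuration (by Hausdorff continuity of the convex hull and continuity of intrinsic volumes), the maximum is attained.

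For (ii), let $a,b$ denote the $x$- and $y$-extents of the bounding rectangle $R$ of $\bigcup\mathcal{F}$. The WNS condition forces the projections of the cubes to each axis to cover a single interval; sorting the $x$-positions, consecutive differences are at most $1$, so $a\leq n$ (and analogously $b\leq n$). Moreover, $a=n$ forces the $x$-positions to be exactly $\{0,1,\ldots,n-1\}$ up to translation.

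The critical case is $a=b=n$, where the cubes occupy integer positions $(i,\pi(i))$ for a permutation $\pi$ of $\{0,1,\ldots,n-1\}$. Set $p:=\pi(0)$, $q:=\pi(n-1)$, $r:=\pi^{-1}(0)$, $s:=\pi^{-1}(n-1)$; assuming no corner of $R$ is occupied by a cube (so the four extreme cubes are distinct), the convex hull differs from $R$ by four corner triangles of leg sizes $(r,p)$, $(n-1-r,q)$, $(n-1-q,n-1-s)$, $(n-1-p,s)$. A direct expansion yields the identity
\begin{equation*}
2\sum_{\text{corners}}(\text{cut area})=(p-q)(r-s)+(n-1)^2.
\end{equation*}
Since $p,q,r,s\in\{1,\ldots,n-2\}$ with $p\neq q$ and $r\neq s$, we have $|p-q|,|r-s|\leq n-3$, so $(p-q)(r-s)\geq-(n-3)^2$. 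Hence $\sum(\text{cut area})\geq\tfrac12\bigl((n-1)^2-(n-3)^2\bigr)=2n-4$, giving $\area(\conv\bigcup\mathcal{F})\leq n^2-2n+4$. The perimeter bound follows from the analogous decomposition $\perim=2(a+b)-\sum(\text{cut legs})+\sum\sqrt{x_i^2+y_i^2}$, which in the critical case reduces to $\perim=4+\sum\sqrt{x_i^2+y_i^2}$ (since the leg sums telescope to $4(n-1)$), and one upper-bounds the hypotenuse sum using the permutation constraints.

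The main obstacle is the bookkeeping for boundary cases: when some corner of $R$ is occupied by a cube the identity above is replaced by a shorter sum whose direct estimate still gives $\sum(\text{cut area})\geq(n-1)^2/2\geq 2n-4$ for $n\geq 4$; and when $a<n$ or $b<n$, the bounding rectangle shrinks and a parallel corner-cut computation handles the deficit, since a missing extent forces some cubes to share a row or column and in the range $ab>n^2-2n+4$ the resulting corner cuts make up the difference. Extremality for both bounds is exhibited by placing four cubes near the middle of each side of the $n\times n$ square, for example at $(0,\lceil(n-1)/2\rceil)$, $(\lfloor(n-1)/2\rfloor,0)$, $(n-1,\lfloor(n-1)/2\rfloor)$, $(\lceil(n-1)/2\rceil,n-1)$, and completing the permutation along the diagonal with the remaining $n-4$ cubes.
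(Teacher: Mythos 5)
Your compactness argument for (i) establishes something weaker than what the paper proves and needs.  You show that a maximizer exists among WNS-families whose bounding box is $n\C_d$; but the paper's statement, as it is used at the start of the proof of (ii) (``by (i), it is sufficient to prove (ii) under the condition that the smallest axis-parallel rectangle containing $\mathcal{F}$ is $[0,n]^2$''), asserts that the maximum over \emph{all} WNS-families of $n$ translates is attained by a family with bounding box $n\C_d$.  To reduce the general problem to the full-size box you must show that shrinking the box cannot increase $V_i$.  The paper does this via shadow systems: first it makes the family integral (moving a residue class of cubes in a coordinate direction is a shadow system, and $V_i(\conv(\cdot))$ is a convex function of the parameter, hence maximized at an endpoint, which lowers the number of residue classes by one), and then, if some side of the bounding box is shorter than $n$, it slides one of two cubes that share a coordinate value to the outside, again as a shadow system, enlarging the box while not decreasing $V_i$.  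Your proof contains no step of this kind, so it does not support the reduction invoked in (ii).

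For (ii) there is a genuine inequality going the wrong way.  You assert that ``the convex hull differs from $R$ by four corner triangles'' with legs determined by $p,q,r,s$, and then lower-bound the total triangle area.  But the convex hull of the $n$ cubes can be strictly \emph{larger} than $R$ minus those four triangles: whenever a cube has its corner $(i,\pi(i))$ strictly inside one of the corner triangles (for example $n=7$ with $\pi=(2,6,1,3,4,0,5)$ places $(2,1)$ strictly below the chord from $(0,2)$ to $(5,0)$), the hull boundary bulges past the chord, the actual cut region is a proper subset of the four triangles, and $\area(\conv(\bigcup\mathcal{F}))$ exceeds $n^2$ minus your formula.  Since you want an upper bound on the area, you need a lower bound on the \emph{actual} cut, and your inequality ``triangle area $\geq 2n-4$'' only gives an upper bound on it.  The same directional issue propagates to the perimeter estimate and to your treatment of the boundary cases, which is in any case only sketched (``the resulting corner cuts make up the difference'').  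The paper avoids this by again running shadow systems on the side-adjacent cubes, using convexity of area and perimeter along the system to push each of the four extreme cubes to an endpoint position, after which the configuration is one of a small explicit list for which the hull really is $R$ minus four triangles and the claimed values are checked directly.  Your corner-triangle identity and the bound $(p-q)(r-s)\geq -(n-3)^2$ are correct and could be a useful ingredient, but as written they do not close the argument.

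You also only sketch the case $a<n$ or $b<n$ and the case in which a corner of $R$ is covered, and the claimed estimate $\sum(\text{cut area})\geq (n-1)^2/2$ in the latter case relies on the same unjustified identification of the cut with the four triangles.
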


\begin{proof}
First, we prove (i).
Consider a WNS-family $\mathcal{F}:= \{ \p_k + \C_d \ |\  k=1,2,\ldots, n \}$ of $n$ translates of $\C_d$, where $\p_k := (p_k^1, p_k^2, \ldots, p_k^d)\in\Ee^d$. Let $\e_1, \e_2, \ldots, \e_d$ denote the standard orthonormal basis vectors in $\Ee^d$.

\begin{figure}[ht]
\begin{center}
\includegraphics[width=0.4\textwidth]{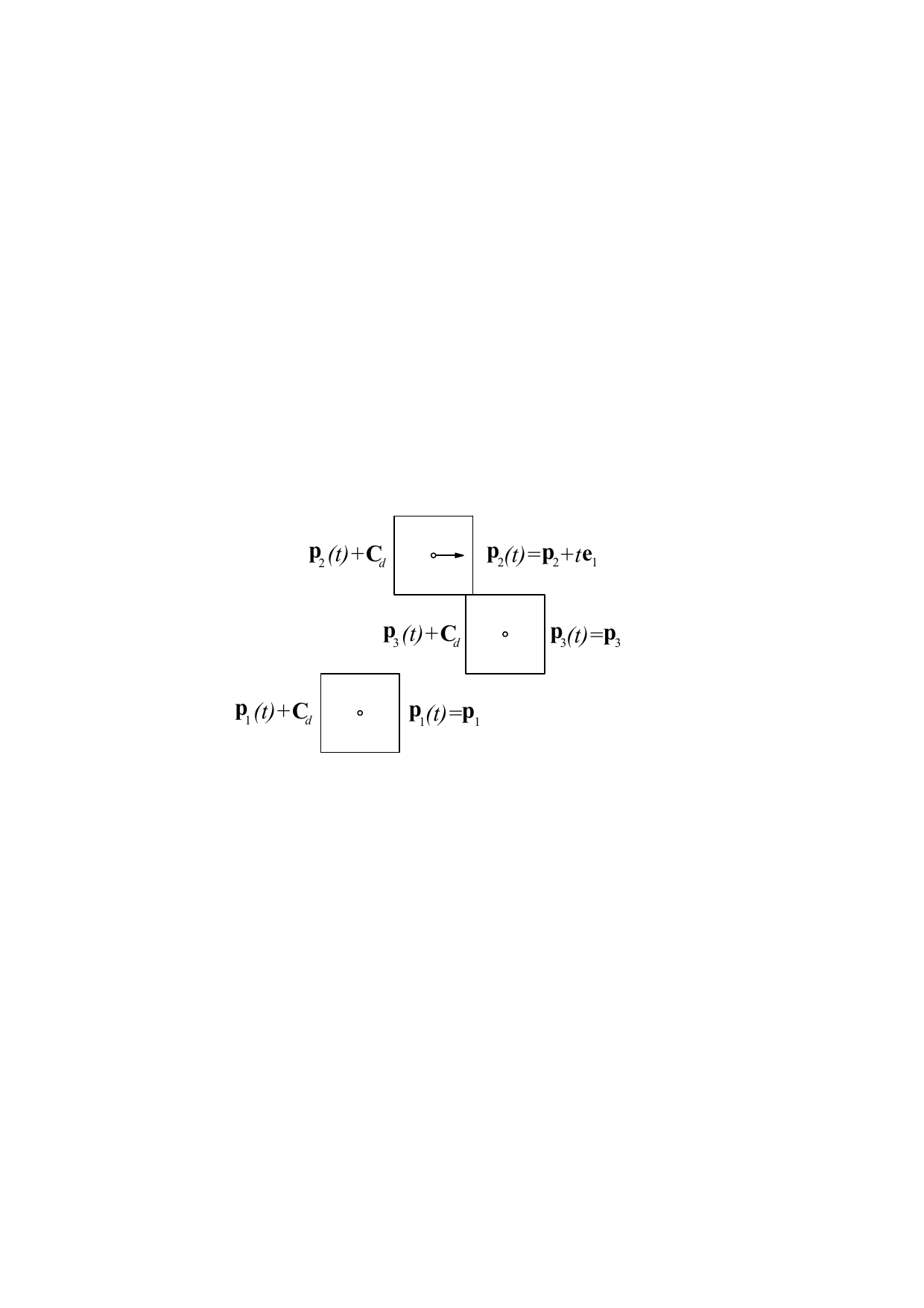}
\caption{The shadow system defined in the proof of Theorem~\ref{thm:WNScubes}}
\label{fig:shadowsystem}
\end{center}
\end{figure}

First, we show that there is an integer WNS-family $\mathcal{F}'$ satisfying $V_i\left( \conv \left( \bigcup \mathcal{F}\right) \right) \leq V_i\left( \conv \left( \bigcup \mathcal{F}'\right) \right)$. If $\mathcal{F}$ is a translate of an integer family, then the statement clearly holds. Assume that this condition is not satisfied. Then, without loss of generality, we may assume that there are translates $\p_j + \C_d$ and $\p_k+\C_d$ such that $p_k^1-p_j^1$ is not an integer. For any $0 \leq \tau < 1$, let $S_{\tau}$ denote the set of indices $j$ with the property that $p_j^1 \equiv \tau$ modulo $1$. Let $T:= \{ \tau_1, \tau_2, \ldots, \tau_m \}$ denote the set of values $\tau$ for which $S_\tau \neq \emptyset$. Without loss of generality, we may assume that $0 \leq \tau_1 < \tau_2 < \ldots < \tau_m$, and set $\tau_{m+1}:=\tau_1+1$. Due to our assumptions, $m \geq 2$. Consider some value $2 \leq k \leq m$. Following \cite{RS} and \cite{Shephard}, we define a shadow system $\mathcal{F}(t)$ on the interval $t \in [\tau_{i-1}-\tau_i, \tau_{i+1}-\tau_i]$ as follows:
\begin{itemize}
\item[(1)] If $j \notin S_{\tau_k}$, then $\p_j(t):=\p_j$.
\item[(2)] If $j \in S_{\tau_k}$, then $\p_j(t):= \p_j + t \e_1$.
\item[(3)] We set $\mathcal{F}(t) := \{ \p_j(t)+ \C_d \ |\  j=1,2,\ldots, n \}$ (see Figure~\ref{fig:shadowsystem}).
\end{itemize}
Then, for every value of $t$, $\mathcal{F}(t)$ is a WNS-family of $n$ translates of $\C_d$. Thus, $V_i\left( \conv \left( \bigcup \mathcal{F}(t)\right) \right)$ is a convex function of $t$ (see \cite{Shephard}). Since a convex function, defined on an interval, attains its maximum at one of the endpoints of the interval, we may replace $\mathcal{F}$ with another WNS-family in which the number of the classes of the indices of the elements is strictly less than $m$. Repeating the procedure we obtain a WNS-family for which the difference of the first coordinates of any two translation vectors is an integer. Applying the same consideration for all coordinates of the translation vectors, we obtain the desired statement, and from now on we assume that $\mathcal{F}$ is an integer WNS-family.

Next, we show that there is a WNS-family $\mathcal{F}'$ satisfying $V_i\left( \conv \left( \bigcup \mathcal{F}\right) \right) \leq V_i\left( \conv {\left( \bigcup \mathcal{F}'\right)} \right)$ such that the smallest axis-parallel box $\mathbf{B}$ containing $\bigcup \mathcal{F}'$ is a translate of $n \C_d$. Assume that $\mathcal{F}$ does not satisfy this property. Let $\mathbf{B} = \bigtimes_{j=1}^d [a_j,b_j]$ for some integers $a_j,b_j$. Without loss of generality, assume that $b_1 - a_1 < n$. Then there are some indices $j \neq k$ such that $p_j^1 = p_k^1$. Then, moving $\p_j + \C_d$ parallel to $\e_1$ at a constant speed such that the first coordinate $t$ of the translation vectors runs over the interval $[a_1-1,b_1]$, and keeping all other elements of $\mathcal{F}$ fixed, we define a shadow system $\mathcal{F}(t)$ whose every element is a WNS-family. By the previous argument, replacing $\mathcal{F}$ with $\mathcal{F}(a_1-1)$ or $\mathcal{F}(b_1)$, we obtain a WNS-family such that the smallest axis-parallel box containing it is $[a_1-1,b_1] \times \bigtimes_{j=2}^d [a_j,b_j]$ or $[a_1,b_1+1] \times \bigtimes_{j=2}^d [a_j,b_j]$.
Now, repeating this procedure until $b_j-a_j < n$ for some value of $j$ finishes the proof of (i).

Next, we prove (ii). First, note that by (i), it is sufficient to prove (ii) under the condition that the smallest axis-parallel rectangle containing $\mathcal{F}$ is $\mathbf{B}:=[0,n]^2$. Let us define a WNS-family $\mathcal{F}_n$ as follows. Let $\C' :=[1,n-1]^2$. Glue four unit squares to $\C'$ such that the bottom left corners of the squares are the points $(1,0)$, $(n-1,1)$, $(n-2,n-1)$ and $(0,n-2)$. Then, if we add $n-4$ axis-parallel unit squares along one of the two diameters of the square $[2,n-2]^2$, then we obtain a WNS-packing $\mathcal{F}_n$ of $n$ translates of $\C_2$. An elementary computation shows that
\[
\area\left( \conv \left( \bigcup \mathcal{F}_n\right) \right)  = n^2 - 2n+4, \quad \hbox{and} \quad \perim\left( \conv \left( \bigcup \mathcal{F}_n \right) \right)  = 4+ 4\sqrt{n^2 - 4n+5}.
\]

We prove the statement about area. Consider the case that no vertex of $\mathbf{B}$ belongs to an element of $\mathcal{F}$. Take the side of $\mathbf{B}$ on the line $x=0$. If we move along this line the element $\C(i)$ of $\mathcal{F}$ adjacent to it, we define a shadow system, and thus, it follows that the area $\mathcal{F}$ is maximal if the left bottom corner of $\C(i)$ is either $(0,1)$ or $(0,n-2)$. Applying a similar consideration to every side of $\mathbf{B}$, a simple computation yields the assertion. If a vertex of $\mathbf{B}$ belongs to an element of $\mathcal{F}$, we may apply a slightly modified variant of this consideration.
Finally, to prove the statement about perimeter, we follow the argument in the proof for area.
\end{proof}

\section{Extending Theorem~\ref{k-IP-family} to weakly $k$-impassable families}

\begin{defn}\label{Bezdek-Langi extended}
Let $\mathbf{P}$ be a $d$-dimensional convex polytope in $\Ee^d$ and let $\mathcal{P} := \{ \xx_i + \tau_i \mathbf{P}\ |\  \xx_i\in \Ee^d, \tau_i>0, i=1,2,\ldots, n\}$, where $d\ge 2$ and $n\ge 2$. Let $0\leq k\leq d-1$.
We call $\mathcal{P}$ a \emph{weakly $k$-impassable family}, in short, a \emph{$k$-WIP-family} if every $k$-dimensional affine subspace of $\Ee^d$ that is parallel to a facet of $\mathbf{P}$ (i.e., has a translate lying in the hyperplane of a facet of $\mathbf{P}$) and intersects $\conv \left( \bigcup \mathcal{P} \right)$, intersects also at least one member of $\mathcal{P}$. Then, let $\Lambda_k(\mathcal{P}) > 0$ denote the smallest positive value $\lambda$ such that a translate of
$\lambda \left( \sum_{i=1}^n \tau_i \right) \mathbf{P}$ covers $\bigcup \mathcal{P}$, where $\mathcal{P}$ is a $k$-WIP-family.
\end{defn}

\begin{thm}\label{weak Bezdek-Langi}
Let $1\leq k\leq d-2$ and $n\geq 2$. If $\mathbf{P}$ is a $d$-dimensional convex polytope in $\Ee^d$ and $\mathcal{P} := \{ \xx_i + \tau_i \mathbf{P}\ |\  \xx_i\in \Ee^d, \tau_i>0, i=1,2,\ldots, n\}$ is a $k$-WIP-family, then  $\conv \left( \bigcup \mathcal{P} \right)$ slides freely in $\left( \sum_{i=1}^n \tau_i \right) \mathbf{P}$ (i.e., $\conv \left( \bigcup \mathcal{P} \right)$ is a summand of $\left( \sum_{i=1}^n \tau_i \right) \mathbf{P}$) and therefore $\Lambda_k(\mathcal{P}) \leq 1$, where equality holds for $n>1$ translates of a $d$-cube whose union is a rectangular box and whose centers lie on a line in $\Ee^d$.
\end{thm}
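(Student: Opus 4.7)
I plan to adapt the proof of Theorem~\ref{k-IP-family} from \cite{BezLan16}, exploiting the fact that when $\mathbf{P}$ is a polytope, sliding-freely in $\tau\mathbf{P}$ (with $\tau := \sum_{i=1}^n \tau_i$) can be certified by checking conditions only along the facet normals of $\mathbf{P}$, and these are exactly the conditions supplied by the $k$-WIP hypothesis. The preliminary observation is that $k$-WIP with $k\geq 1$ automatically implies WNS: a hyperplane $H$ parallel to a facet of $\mathbf{P}$ meeting $\conv(\bigcup\mathcal{P})$ contains some $k$-flat meeting $\conv(\bigcup\mathcal{P})$, which by $k$-WIP meets some member of $\mathcal{P}$, so $H$ does too.

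\textbf{Step 1 (facet-normal support data).} Let $\mathbf{v} := \tau^{-1}\sum_{i=1}^n \tau_i \xx_i$. For each facet normal $\mathbf{u}_j$ of $\mathbf{P}$, I would project onto a $(d-k)$-dimensional subspace containing $\mathbf{u}_j$ and use the $k$-WIP hypothesis (applied to $k$-flats parallel to the facet with normal $\mathbf{u}_j$) to deduce that the projection of $\bigcup \mathcal{P}$ covers the projection of $\conv(\bigcup\mathcal{P})$. Combined with iterated applications of Lemma~\ref{lem:GG}, this is expected to give the support inequality $h_{\conv(\bigcup\mathcal{P})}(\mathbf{u}_j) \leq h_{\mathbf{v}+\tau\mathbf{P}}(\mathbf{u}_j)$, and since $\mathbf{P}$ is the intersection of its facet halfspaces, we obtain $\conv(\bigcup\mathcal{P}) \subseteq \mathbf{v} + \tau\mathbf{P}$ and in particular $\Lambda_k(\mathcal{P}) \leq 1$.

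\textbf{Step 2 (sliding freely from summand structure).} By \cite[Theorem~3.2.2]{Sch14}, it suffices to show that $\conv(\bigcup\mathcal{P})$ is a Minkowski summand of $\mathbf{v}+\tau\mathbf{P}$. Since both are polytopes, this is equivalent to (i) the support-function identity at every facet normal of $\mathbf{P}$ (Step 1), together with (ii) normal-fan refinement: the normal fan of $\tau \mathbf{P}$ refines that of $\conv(\bigcup\mathcal{P})$, i.e., the support set of $\conv(\bigcup\mathcal{P})$ in any direction $\mathbf{u}$ lying in the relative interior of a vertex-normal-cone of $\mathbf{P}$ is a single point. To verify (ii), I would apply $k$-WIP to $k$-flats lying in hyperplanes parallel to facets incident to the corresponding vertex of $\mathbf{P}$, collapsing the support set to a point.

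\textbf{Main obstacle and equality case.} The principal difficulty lies in the normal-fan refinement condition (ii) of Step 2. The hypothesis $k \leq d-2$ is essential here: the weaker WNS case (corresponding to $k = d-1$) yields only the facet-normal data of Step 1, giving a bound of merely $(d+1)/2$ by Theorem~\ref{ABG combined with WNS-simplex bound}, not the sharp $\Lambda_k \leq 1$ claimed. Quantitatively, the $k$-WIP condition with $k\leq d-2$ provides transversality of $k$-flats with non-degenerate face structures of $\mathbf{P}$, which is what collapses the support sets at vertices and pins down the normal fan of $\conv(\bigcup\mathcal{P})$. For the equality claim, $n$ unit translates of the $d$-cube $\C_d$ arranged along a coordinate axis with consecutive cubes touching gives $\bigcup\mathcal{P} = [0,\tau]\times [0,1]^{d-1}$ contained in $\tau \C_d = [0,\tau]^d$; moreover the Minkowski decomposition $[0,\tau]\times [0,1]^{d-1} + \{\mathbf{0}\}\times [0,\tau-1]^{d-1} = [0,\tau]^d$ shows that $\conv(\bigcup\mathcal{P})$ is genuinely a summand of $\tau \C_d$, so $\Lambda_k(\mathcal{P}) = 1$ is attained.
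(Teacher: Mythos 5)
Your proposal diverges from the paper's argument in ways that matter, and the most concrete problem is Step~1. You claim that combining the $k$-WIP hypothesis with iterated applications of Lemma~\ref{lem:GG} yields $h_{\conv(\bigcup\mathcal{P})}(\mathbf{u}_j)\leq h_{\mathbf{v}+\tau\mathbf{P}}(\mathbf{u}_j)$ with $\mathbf{v}$ the $\tau_i$-weighted mass center, hence $\conv(\bigcup\mathcal{P})\subseteq\mathbf{v}+\tau\mathbf{P}$. This is false for non-symmetric $\mathbf{P}$. The support function in a fixed direction depends only on the one-dimensional projection, so the $k$-WIP hypothesis with $k\leq d-2$ cannot buy you anything beyond what WNS gives in that projection. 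And if you run Lemma~\ref{lem:GG} on the projected intervals $\bigl[\langle\xx_i,\mathbf{u}_j\rangle+\tau_i m_j,\ \langle\xx_i,\mathbf{u}_j\rangle+\tau_i M_j\bigr]$ (with $m_j,M_j$ the min/max of $\langle\cdot,\mathbf{u}_j\rangle$ on $\mathbf{P}$), the GG covering interval is centered at $\langle\mathbf{v},\mathbf{u}_j\rangle+\bigl(\sum\tau_i^2/\tau\bigr)(m_j+M_j)/2$, whereas the projection of $\mathbf{v}+\tau\mathbf{P}$ is centered at $\langle\mathbf{v},\mathbf{u}_j\rangle+\tau(m_j+M_j)/2$. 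These agree only when $m_j+M_j=0$ (the $\mathbf{o}$-symmetric case handled by Theorem~\ref{centrally symmetric convex bodies revisited}); for a simplex one has $m_j+M_j\neq 0$, and the GG interval then sits on the wrong side of the desired one, so the inequality you want need not hold. This is exactly why Theorem~\ref{ABG combined with WNS-simplex bound} only gives $(d+1)/2$ rather than $1$ from WNS. The correct translate of $\tau\mathbf{P}$ covering $\conv(\bigcup\mathcal{P})$ exists because of the summand structure, but it is generally not the mass-center translate, so the whole Step~1 containment is the wrong starting point.

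The genuinely missing idea, which the paper supplies, is Lemma~\ref{covering an edge by edges}: every edge of $\mathbf{Q}:=\conv(\bigcup\mathcal{P})$ is covered by $\bigcup\mathcal{P}$ itself. The paper proves this by contradiction: an uncovered gap $(\mathbf{x}',\mathbf{y}')$ on an edge $[\mathbf{x},\mathbf{y}]$, together with a facet hyperplane of the tangent cone of the relevant homothet at $\mathbf{x}'$ and the supporting hyperplane of $\mathbf{Q}$ along the edge, produces a $(d-2)$-flat parallel to a facet of $\mathbf{P}$ that meets $\mathbf{Q}$ but no member of $\mathcal{P}$, violating $(d-2)$-WIP (and the problem reduces to $k=d-2$ since $k$-WIP for smaller $k$ implies it). Once the edges are covered, each edge $F(\mathbf{Q},\mathbf{u})$ is a union of segments $\xx_i+\tau_iF(\mathbf{P},\mathbf{u})$ and therefore translates into $F(\tau\mathbf{P},\mathbf{u})$; the paper then invokes Schneider's Theorem~3.2.11 (Lemma~\ref{Schneider Lemma}), which states the summand criterion precisely in terms of edge faces of $\mathbf{Q}$, sidestepping the support-function and normal-fan bookkeeping in your Step~2 entirely. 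Your intuition that the normal fan of $\tau\mathbf{P}$ must refine that of $\mathbf{Q}$ is sound in spirit, but you never isolate or prove the edge-covering lemma that makes it work. Your equality discussion for the box of $n$ cubes is correct.
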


\begin{proof}
We start by recalling the following notations and concepts. Let the $(d-1)$-dimensional unit sphere centered at the origin of $\Ed$ be denoted by $\Sd:=\{\mathbf{u}\in\Ed | \|\mathbf{u}\|=1\}$. Let $\mathbf{K}$ be a convex body in $\Ed$ and $H^+$ be a closed halfspace of $\Ed$ bounded by the hyperplane $H$. We call $\mathbf{u}\in\Sd$ the outer unit normal vector of $H^+$ if $H^+$ is a translate of $\{\mathbf{x}\in\Ed | \langle \mathbf{x},\mathbf{u}\rangle\leq 0\}$. Then $H$ is a translate of $\{\mathbf{x}\in\Ed | \langle \mathbf{x},\mathbf{u}\rangle= 0\}$. Finally, we call $H^+$ (resp., $H$) the supporting halfspace (resp., supporting hyperplane) of $\mathbf{K}$ with outer unit normal vector $\mathbf{u}$, if $\mathbf{K}\subset H^+$ and $H\cap \mathbf{K}\neq\emptyset$. In this case, the set $F(\mathbf{K}, \mathbf{u}):=H\cap\mathbf{K}$ is called the support set or in short, (exposed) face of $\mathbf{K}$ with outer unit normal vector $\mathbf{u}$. For the proof that follows we need Theorem 3.2.11 from \cite{Sch14} which we state as follows.
\begin{lem}[Schneider, 2014]\label{Schneider Lemma}
Let $\mathbf{K}$ be a convex body and $\mathbf{Q}$ be a $d$-dimensional convex polytope in $\Ed$, $d>1$. Then $\mathbf{Q}$ is a summand of $\mathbf{K}$ (i.e., $\mathbf{Q}$ slides freely inside $\mathbf{K}$) if and only if the face $F(\mathbf{K}, \mathbf{u})$ contains a translate of $F(\mathbf{Q}, \mathbf{u})$ whenever $F(\mathbf{Q}, \mathbf{u})$ is an edge of $\mathbf{Q}$ for $\mathbf{u}\in\Sd$.
\end{lem}

Now, we turn to the proof of Theorem~\ref{weak Bezdek-Langi}. Clearly, if $\mathcal{P} = \{ \xx_i + \tau_i \mathbf{P}\ |\  \xx_i\in \Ee^d, \tau_i>0, i=1,2,\ldots, n\}$ is a $k$-WIP-family for $1\leq k\leq d-2$, then $\mathcal{P}$ is a $(d-2)$-WIP-family as well. Thus, it is sufficient to prove Theorem~\ref{weak Bezdek-Langi} for any $\mathcal{P}=\{ \xx_i + \tau_i \mathbf{P}\ |\  \xx_i\in \Ee^d, \tau_i>0, i=1,2,\ldots, n\}$ which is a $(d-2)$-WIP-family, where $n\geq 2$. In order to prove it we need the following statement.

\begin{lem}\label{covering an edge by edges}
If $\mathbf{P}$ is a $d$-dimensional convex polytope in $\Ee^d$ and $\mathcal{P} := \{ \xx_i + \tau_i \mathbf{P}\ |\  \xx_i\in \Ee^d, \tau_i>0, i=1,2,\ldots, n\}$ is a $(d-2)$-WIP-family for $d\geq 3$ and $n\geq 2$, then every edge of $\conv  \left( \bigcup \mathcal{P} \right)$ is covered by $\bigcup \mathcal{P}$.
\end{lem}

\begin{proof} We prove it by contradiction. Assume that the $d$-dimensional convex polytope $\mathbf{Q}:={\rm conv}\left(\bigcup  \mathcal{P}\right)$ has an edge say, the closed line segment $[\mathbf{x},\mathbf{y}]$ connecting the vertices $\mathbf{x},\mathbf{y}\in{\rm vert}\left(\mathbf{Q}\right)$ such that $[\mathbf{x},\mathbf{y}]\setminus\left(\bigcup  \mathcal{P}\right)\neq\emptyset$, where ${\rm vert}(\cdot)$ refers to the set of vertices of the corresponding convex polytope. This means that there are vertices $\mathbf{x'}\in{\rm vert}\left( \xx_i + \tau_i \mathbf{P}\right)$ and $\mathbf{y'}\in{\rm vert}\left( \xx_j + \tau_j \mathbf{P}\right)$ for some $1\leq i,j\leq n$ such that $\emptyset\neq (\mathbf{x'},\mathbf{y'})\subsetneq [\mathbf{x},\mathbf{y}]$ and $(\mathbf{x'},\mathbf{y'})\cap\left(\bigcup  \mathcal{P}\right)=\emptyset$, where $(\mathbf{x'},\mathbf{y'})$ denotes the open line segment with endpoints $\mathbf{x'}$ and $\mathbf{y'}$. Without loss of generality we may assume that as we move from $\mathbf{x}$ towards $\mathbf{y}$ first we visit $\mathbf{x'}$ and then $\mathbf{y'}$. Furthermore, let $H^+$ (resp., $H$) be a supporting halfspace (resp., supporting hyperplane) of $\mathbf{Q}$ with outer unit normal vector $\mathbf{u}\in\Sd$ such that $F(\mathbf{Q}, \mathbf{u})=H\cap\mathbf{K}=[\mathbf{x},\mathbf{y}]$ (see Figure~\ref{fig:covering edges}).

\begin{figure}[ht]
\begin{center}
\includegraphics[width=0.5\textwidth]{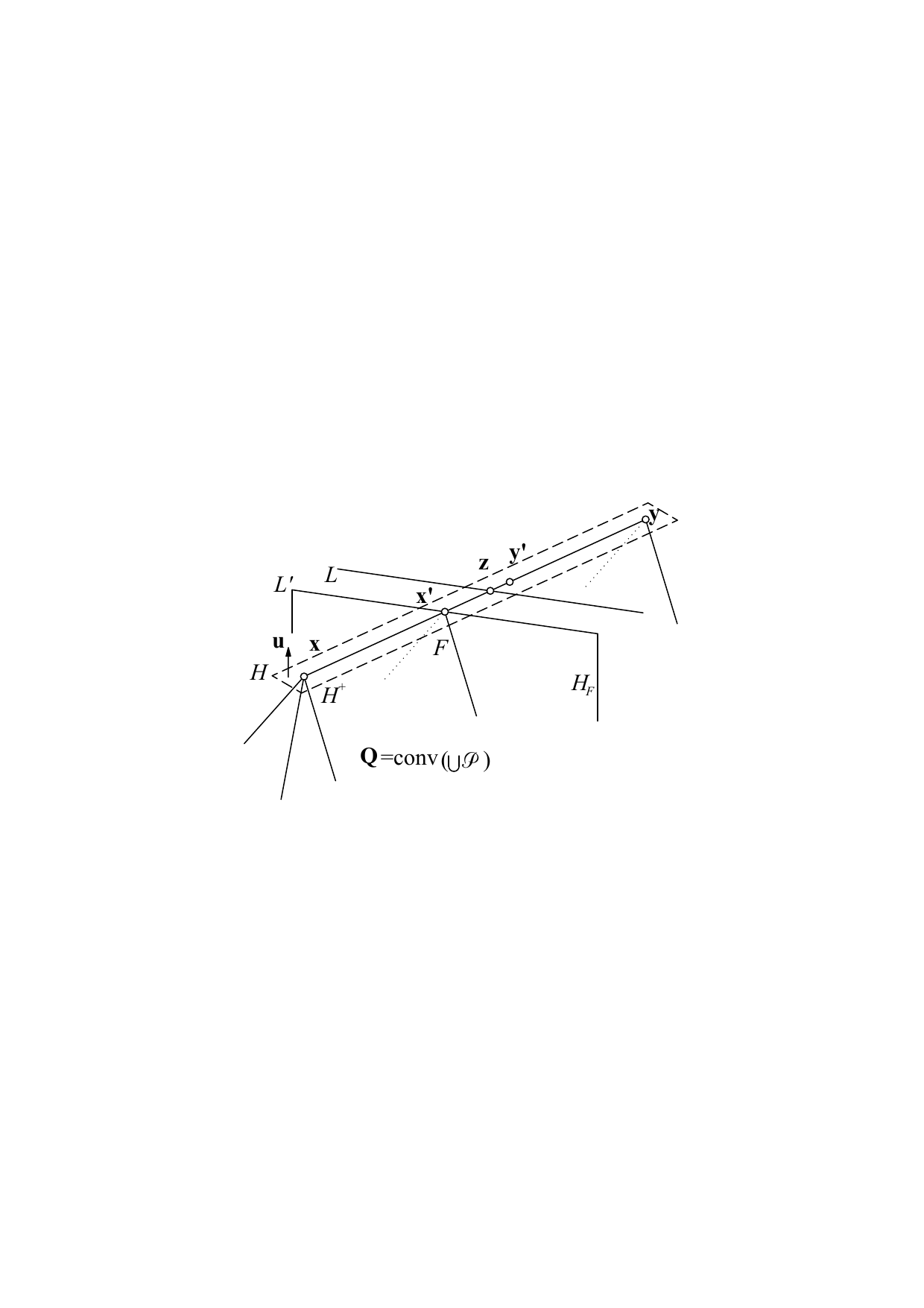}
\caption{Every edge of $\mathbf{Q}=\conv \left( \bigcup \mathcal{P} \right)$ is covered by $\bigcup \mathcal{P}$.}
\label{fig:covering edges}
\end{center}
\end{figure}

Now, let $\mathbf{T}_{\xx_i + \tau_i \mathbf{P}}(\mathbf{x'})$ be the tangent cone of $\xx_i + \tau_i \mathbf{P}$ at the vertex $\mathbf{x'}$, which is the union of all closed halflines starting at $\mathbf{x'}$ and passing through the points of $\xx_i + \tau_i \mathbf{P}$. Clearly, $\mathbf{T}_{\xx_i + \tau_i \mathbf{P}}(\mathbf{x'})$ is a $d$-dimensional convex polyhedral cone with apex $\mathbf{x'}$. By assumption, there is a facet $F$ of $\mathbf{T}_{\xx_i + \tau_i \mathbf{P}}(\mathbf{x'})$ whose hyperplane $H_F$ separates $(\mathbf{x'},\mathbf{y'})$ from $\mathbf{T}_{\xx_i + \tau_i \mathbf{P}}(\mathbf{x'})$ with $\mathbf{x'}\in F\subsetneq H_F$. It follows that $L':=H\cap H_F$ is a $(d-2)$-dimensional affine subspace in $\Ed$ with the property that $L'\cap\mathbf{Q}=\{\mathbf{x'}\}$. Finally, let $\mathbf{z}\in{\rm relint(\mathbf{x'},\mathbf{y'})}$ and let $L$ be the translate of $L'$ that passes through $\mathbf{z}$. Clearly, $L\cap\mathbf{Q}=\{\mathbf{z}\}$ and therefore $L\cap \left(\bigcup  \mathcal{P}\right)=\emptyset$. As by assumption $\mathcal{P}$ is a $(d-2)$-WIP-family therefore we have arrived at a contradiction. This completes the proof of Lemma~\ref{covering an edge by edges}.

\end{proof}
Finally, we finish the proof of Theorem~\ref{weak Bezdek-Langi} as follows.  Let $H^+$ (resp., $H$) be a supporting halfspace (resp., supporting hyperplane) of $\mathbf{Q}$ with outer unit normal vector $\mathbf{u}\in\Sd$ such that $F(\mathbf{Q}, \mathbf{u})=H\cap\mathbf{K}=[\mathbf{x},\mathbf{y}]$ is an edge of $\mathbf{Q}={\rm conv}\left(\bigcup  \mathcal{P}\right)$. As by assumption $\mathcal{P}$ is a $(d-2)$-WIP-family therefore Lemma~\ref{covering an edge by edges} implies in a straightforward way that the edge $[\mathbf{x},\mathbf{y}]$ can be translated into the parallel edge of $\left( \sum_{i=1}^n \tau_i \right) \mathbf{P}$ with supporting halfspace $H^+$ (resp., supporting hyperplane $H$) of outer unit normal vector $\mathbf{u}\in\Sd$. Thus, Lemma~\ref{Schneider Lemma} yields that $\mathbf{Q}$ slides freely in $\left( \sum_{i=1}^n \tau_i \right) \mathbf{P}$ (i.e., $\mathbf{Q}$ is a summand of $\left( \sum_{i=1}^n \tau_i \right) \mathbf{P}$), finishing the proof of 
Theorem~\ref{weak Bezdek-Langi}.

\end{proof}

\section{Weakly $k$-impassable lattice arrangements}

As in the case of non-separability, it is a natural problem to extend the definition of $k$-impassable families for lattice arrangements. In the following, we say that a lattice arrangement of translates of a convex polytope $\mathbf{P}$ is \emph{weakly $k$-impassable} if every $k$-dimensional affine subspace parallel to a facet of $\mathbf{P}$ intersects a member of the arrangement. If $k=d-1$, we may call the arrangement \emph{weakly non-separable}.
Following the paper \cite{KaLo} of Kannan and Lov\'asz, one can define covering minima type quantities for weakly $k$-impassable lattice arrangements of a $d$-dimensional convex polytope for $0\leq k\leq d-1$.

\begin{defn}\label{defn:WNScovmin}
Let $\mathbf{P}$ be a $d$-dimensional convex polytope in $\Ee^d$, and let $L \in \mathcal{L}^d$ be a lattice in $\Ee^d$. For $i=1,\ldots, d$ we define the \emph{$i$th weak covering minimum} of $\mathbf{P}$ with respect to $L$ as the quantity
\[
\mu_i^w(\mathbf{P},L) = \inf \{ t > 0 | L + t\mathbf{P} \hbox{ intersects every } (d-i)-\hbox{dimensional affine subspace parallel to a facet of } \mathbf{P} \}.
\]
\end{defn}

Note that we have $\mu_i^w(\mathbf{P},L) \leq \mu_j^w(\mathbf{P},L)$ for every convex polytope $\mathbf{P}$, lattice $L$, and $1 \leq i \leq j \leq d$, and that $\mu_d^w(\mathbf{P},L)$ coincides with the covering radius of the arrangement.

It is a reasonable question to ask how small the density of a weakly $k$-impassable lattice arrangement of translates of a convex polytope can be. The following result shows that, unlike for non-separable arrangements, for weakly non-separable arrangements it can be arbitrarily small.

\begin{thm}\label{thm:WNSnolattice}
Let $d \geq 2$. Then there is an $\o$-symmetric $d$-dimensional convex polytope $\mathbf{P} \subset \Ee^d$ such that $\mu_1^w(\mathbf{P},\mathbb{Z}^d) = 0$.
\end{thm}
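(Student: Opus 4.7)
The plan is to reduce the statement to a density property of lattice projections and then exhibit $\mathbf{P}$ as an $\o$-symmetric parallelepiped whose facet normals all point in ``irrational'' directions. A hyperplane $H$ parallel to a facet of $\mathbf{P}$ with outer unit normal $\mathbf{u}$ has the form $\{\mathbf{x}\in\Ee^d:\langle\mathbf{x},\mathbf{u}\rangle=c\}$ for some $c\in\Re$, and a translate $\mathbf{z}+t\mathbf{P}$ meets $H$ precisely when $c\in[\langle\mathbf{z},\mathbf{u}\rangle-t\,h_{\mathbf{P}}(\mathbf{u}),\,\langle\mathbf{z},\mathbf{u}\rangle+t\,h_{\mathbf{P}}(\mathbf{u})]$, using $h_{\mathbf{P}}(-\mathbf{u})=h_{\mathbf{P}}(\mathbf{u})$. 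Thus $\mu_1^w(\mathbf{P},\mathbb{Z}^d)=0$ will follow as soon as, for every facet outer normal $\mathbf{u}$ of $\mathbf{P}$, the additive subgroup
$$\Gamma_\mathbf{u}\ :=\ \{\langle\mathbf{z},\mathbf{u}\rangle:\mathbf{z}\in\mathbb{Z}^d\}\ =\ u_1\mathbb{Z}+\cdots+u_d\mathbb{Z}$$
is dense in $\Re$; a classical one-dimensional Kronecker argument guarantees this whenever some pair of coordinates of $\mathbf{u}$ has irrational ratio.

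The construction is then direct. Fix an irrational $\alpha>0$ (for safety one may take $\alpha$ transcendental, e.g.\ $\alpha=\pi$), and set
$$\mathbf{u}_k\ :=\ \mathbf{e}_k+\alpha\,\mathbf{e}_{k+1\bmod d},\qquad k=1,\dots,d.$$
The $d\times d$ matrix with columns $\mathbf{u}_1,\dots,\mathbf{u}_d$ equals $I+\alpha S$, where $S$ is the cyclic shift; its eigenvalues are $1+\alpha\omega^k$ with $\omega=e^{2\pi i/d}$, so its determinant is $1-(-\alpha)^d\ne 0$ for such $\alpha$. Hence the $\mathbf{u}_k$ are linearly independent, and
$$\mathbf{P}\ :=\ \bigcap_{k=1}^{d}\bigl\{\mathbf{x}\in\Ee^d:|\langle\mathbf{x},\mathbf{u}_k\rangle|\le 1\bigr\}$$
is a bounded $\o$-symmetric parallelepiped in $\Ee^d$ whose $2d$ facet outer unit normals are precisely $\pm\mathbf{u}_k/\|\mathbf{u}_k\|$ for $k=1,\dots,d$.

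Finally, each $\mathbf{u}_k$ has exactly two nonzero entries, namely $1$ and $\alpha$, so $\Gamma_{\mathbf{u}_k}=\mathbb{Z}+\alpha\mathbb{Z}$ is dense in $\Re$ by the irrationality of $\alpha$. Combining this with the reduction of the first paragraph gives $\mu_1^w(\mathbf{P},\mathbb{Z}^d)=0$, as required. The only design issue is to produce simultaneously a bounded $d$-polytope and facet normals that all generate dense subgroups of $\Re$; the circulant ansatz above dispatches both requirements at once, so no genuine obstacle appears.
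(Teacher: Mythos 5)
Your proof is correct, and while the object is the same (an $\o$-symmetric parallelepiped sitting ``irrationally'' with respect to $\mathbb{Z}^d$), the route is genuinely different. The paper picks edge directions $\mathbf{v}_1,\ldots,\mathbf{v}_d$, each with \emph{all} coordinates linearly independent over $\mathbb{Q}$, and invokes the multi-dimensional Kronecker theorem to get that $\mathbb{Z}^d+\mathbb{R}\mathbf{v}_i$ is dense in $\Ee^d$, hence $\mathbb{Z}^d+\varepsilon\mathbf{P}$ meets every line parallel to an edge and a fortiori every hyperplane parallel to a facet. You instead parameterize $\mathbf{P}$ by its facet normals, reduce the whole claim to the one-dimensional density of the projection $\Gamma_{\mathbf{u}}=\{\langle\mathbf{z},\mathbf{u}\rangle:\mathbf{z}\in\mathbb{Z}^d\}$ for each facet normal $\mathbf{u}$, and then only need the elementary fact that $\mathbb{Z}+\alpha\mathbb{Z}$ is dense for irrational $\alpha$ --- no Kronecker in several variables. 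Your circulant ansatz $\mathbf{u}_k=\mathbf{e}_k+\alpha\,\mathbf{e}_{k+1}$ also settles the linear independence of the normals cleanly via the determinant $1-(-\alpha)^d\neq 0$. Overall your hypothesis is weaker (one irrational coordinate ratio per normal, rather than full rational independence of all coordinates), and the density input is more elementary; the paper's version is slightly more geometric but relies on heavier number-theoretic machinery. Both are valid.
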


For the proof we need the following form of the well-known theorem of Kronecker from 1884 \cite[Theorem 442]{HW}.

\begin{thm}[Kronecker, 1884]\label{thm:Kronecker}
If $\theta_1, \theta_2, \ldots, \theta_m$ are linearly independent irrational numbers over the field $\mathbb{Q}$, $\alpha_1, \alpha_2, \ldots, \alpha_m \in \Re$, and $N$ and $\varepsilon$ are positive, then there are integers $n>N, p_1,p_2, \ldots, p_m$ such that
\[
\left| n \theta_i - p_i - \alpha_i \right| < \varepsilon
\]
for $i=1,2,\ldots,m$.
\end{thm}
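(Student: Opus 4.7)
The plan is to build $\mathbf{P}$ as an $\mathbf{o}$-symmetric parallelotope whose facet normals each have at least two coordinates with irrational ratio; for such a normal $\mathbf{u}$, the additive subgroup of $\Re$ generated by the coordinates of $\mathbf{u}$ is dense in $\Re$, and this will force every hyperplane in direction $\mathbf{u}$ to be met by an arbitrarily small homothetic copy of $\mathbf{P}$ placed at some lattice point.

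Concretely, I would set $\mathbf{u}_j := \mathbf{e}_j + \sqrt{2}\,\mathbf{e}_{j+1}$ for $j=1,2,\ldots,d$, with indices reduced modulo $d$, and define
\[
\mathbf{P} := \{\mathbf{x} \in \Ee^d : |\langle \mathbf{x}, \mathbf{u}_j \rangle| \leq 1 \text{ for } j=1,\ldots,d\}.
\]
The first preparatory step is to verify that the matrix with rows $\mathbf{u}_1,\ldots,\mathbf{u}_d$ is invertible. It is a circulant matrix, so its determinant equals $\prod_{k=0}^{d-1}(1+\sqrt{2}\,\omega^{k})$ with $\omega:=e^{2\pi i/d}$; each factor satisfies $|1+\sqrt{2}\,\omega^{k}| \geq \sqrt{2}-1 > 0$, hence the product is nonzero. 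Consequently $\mathbf{P}$ is a bounded, $d$-dimensional, $\mathbf{o}$-symmetric convex polytope whose facets have outer normals $\pm \mathbf{u}_j$ for $j=1,\ldots,d$.

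Next, I would fix an arbitrary $\varepsilon>0$ together with a hyperplane $H=\{\mathbf{x} : \langle\mathbf{x},\mathbf{u}_j\rangle = c\}$ parallel to a facet of $\mathbf{P}$. Since the support value of $\mathbf{P}$ in direction $\mathbf{u}_j$ is exactly $1$, a translate $\mathbf{n}+\varepsilon\mathbf{P}$ with $\mathbf{n}\in\Z^d$ meets $H$ precisely when $|c-\langle\mathbf{n},\mathbf{u}_j\rangle|\leq\varepsilon$. The inner product simplifies to $\langle\mathbf{n},\mathbf{u}_j\rangle = n_j+\sqrt{2}\,n_{j+1}$, so the problem reduces to density of $\Z+\sqrt{2}\,\Z$ in $\Re$. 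That density follows from Theorem~\ref{thm:Kronecker} applied with $m=1$, $\theta_1=\sqrt{2}$, and $\alpha_1=c$: there exist integers $n_{j+1}$ and $p_1:=-n_j$ such that $|n_{j+1}\sqrt{2} - p_1 - c| < \varepsilon$, which is exactly what we need. Setting all remaining coordinates of $\mathbf{n}$ to $0$ yields a lattice point whose translate $\mathbf{n}+\varepsilon\mathbf{P}$ intersects $H$. As both $j$ and $c$ were arbitrary, $\Z^d+\varepsilon\mathbf{P}$ meets every hyperplane parallel to a facet of $\mathbf{P}$, giving $\mu_1^w(\mathbf{P},\Z^d)\leq\varepsilon$. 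Letting $\varepsilon\to 0^+$ yields $\mu_1^w(\mathbf{P},\Z^d)=0$.

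The only nontrivial bookkeeping is the invertibility of the circulant $d\times d$ matrix with entries $1$ and $\sqrt{2}$ — this is what guarantees that $\mathbf{P}$ is a genuine $d$-dimensional parallelotope whose facet normals are exactly $\pm\mathbf{u}_j$, and not a degenerate or lower-dimensional object. Once this is in place, the reduction to Kronecker's theorem is immediate, and no case analysis on the dimension $d$ is required: the same construction works uniformly for every $d\geq 2$.
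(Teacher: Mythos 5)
The statement you were asked to prove is Kronecker's theorem itself: a purely number-theoretic assertion about simultaneous inhomogeneous Diophantine approximation of $\mathbb{Q}$-linearly independent irrationals $\theta_1,\ldots,\theta_m$. Your proposal does not prove this. Instead, it constructs an $\mathbf{o}$-symmetric parallelotope with facet normals $\mathbf{e}_j+\sqrt{2}\,\mathbf{e}_{j+1}$ and shows $\mu_1^w(\mathbf{P},\mathbb{Z}^d)=0$ --- that is, it is a proof of Theorem~\ref{thm:WNSnolattice}, and in the middle of the argument you explicitly invoke ``Theorem~\ref{thm:Kronecker} applied with $m=1$, $\theta_1=\sqrt{2}$, $\alpha_1=c$.'' So relative to the assigned statement, the argument is circular: the one thing that needed to be established is assumed as a black box, and everything you actually prove is a downstream consequence. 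For the record, the paper does not prove Kronecker's theorem either; it cites it from Hardy and Wright [Theorem 442], and a genuine proof would proceed, e.g., by Weyl's equidistribution criterion or by the inductive argument in Hardy--Wright, neither of which appears in your text.

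As a secondary remark, if your text is read as a proof of Theorem~\ref{thm:WNSnolattice}, it is essentially sound and takes a mildly different route from the paper's: the paper picks edge directions $\mathbf{v}_i$ with $\mathbb{Q}$-linearly independent coordinates and argues via density of $\mathbb{Z}^d+\mathbb{R}\mathbf{v}$ (Corollary~\ref{cor:Kronecker}, using the full strength of Kronecker with $m=d$), whereas you work directly with facet normals and only need the one-dimensional density of $\mathbb{Z}+\sqrt{2}\,\mathbb{Z}$. Your circulant-determinant check is correct. But none of this discharges the actual task.
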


Theorem~\ref{thm:Kronecker} readily implies the following.

\begin{cor}\label{cor:Kronecker}
Let $\mathbf{v} = (\theta_1,\theta_2,\ldots, \theta_d) \in \Ee^d$ such that the coordinates of $\mathbf{v}$ are linearly independent over $\mathbb{Q}$. Let $L$ be the line $L= \mathbb{R} \mathbf{v}$. Then the set $\mathbb{Z}^d + L$ is everywhere dense in $\Ee^d$.
\end{cor}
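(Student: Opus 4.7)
The plan is to derive Corollary~\ref{cor:Kronecker} as an immediate specialization of Theorem~\ref{thm:Kronecker} with $m=d$. Given an arbitrary target point $\mathbf{x} = (\alpha_1, \ldots, \alpha_d) \in \Ee^d$ and a prescribed tolerance $\varepsilon > 0$, I would first invoke Theorem~\ref{thm:Kronecker} with $\theta_i$ equal to the $i$th coordinate of $\mathbf{v}$ (which are $\mathbb{Q}$-linearly independent by hypothesis), with $\alpha_i$ as the coordinates of $\mathbf{x}$, with $N:=1$, and with the tolerance parameter taken to be $\varepsilon/\sqrt{d}$. This produces an integer $n \geq 1$ and integers $p_1,\ldots, p_d$ satisfying $|n\theta_i - p_i - \alpha_i| < \varepsilon/\sqrt{d}$ for every $i \in \{1, \ldots, d\}$.

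Next, I would set $\mathbf{p} := (p_1, \ldots, p_d) \in \mathbb{Z}^d$ and $\mathbf{q} := n\mathbf{v} - \mathbf{p}$. The coordinatewise estimate above then yields
$$
\|\mathbf{q} - \mathbf{x}\| = \sqrt{\sum_{i=1}^d (n\theta_i - p_i - \alpha_i)^2} < \sqrt{d\cdot(\varepsilon/\sqrt{d})^2} = \varepsilon.
$$
Since $\mathbf{q} = (-\mathbf{p}) + n\mathbf{v}$ decomposes as a sum of an element of $\mathbb{Z}^d$ and an element of $L = \mathbb{R}\mathbf{v}$, the point $\mathbf{q}$ lies in $\mathbb{Z}^d + L$ and is within Euclidean distance $\varepsilon$ of $\mathbf{x}$. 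As $\mathbf{x}$ and $\varepsilon$ were arbitrary, every open Euclidean ball in $\Ee^d$ meets $\mathbb{Z}^d + L$, which is the asserted density.

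No substantive obstacle arises: Corollary~\ref{cor:Kronecker} is essentially a geometric reformulation of Theorem~\ref{thm:Kronecker}, with integer lattice points playing the role of the approximants $p_i$ and integer scalings $n$ of $\mathbf{v}$ supplying the parameter $n$ from Kronecker's statement. The only technical care needed is the passage from the coordinatewise approximation provided by Kronecker to the Euclidean approximation required by topological density in $\Ee^d$, which is handled by the harmless $\sqrt{d}$ factor inserted above.
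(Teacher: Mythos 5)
Your proof is correct and follows exactly the route the paper takes: the paper simply notes that Kronecker's theorem makes $\mathbb{Z}\mathbf{v}+\mathbb{Z}^d$ dense and calls the corollary immediate, while you spell out the choice of parameters and the harmless coordinatewise-to-Euclidean conversion. No difference in substance.
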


\begin{proof}
By Theorem~\ref{thm:Kronecker}, the set $\mathbb{Z} \mathbf{v} + \mathbb{Z}^d$ is everywhere dense in $\Ee^d$, immediately implying Corollary~\ref{cor:Kronecker}.
\end{proof}

Now we are ready to prove Theorem~\ref{thm:WNSnolattice}.

\begin{proof}
Choose linearly independent vectors $\mathbf{v}_1, \mathbf{v}_2,\ldots, \mathbf{v}_d$ in $\Ee^d$ such that for every value of $i$, the coordinates of $\mathbf{v}_i$ are linearly independent over $\mathbb{Q}$. Let $\mathbf{P}$ be an $\mathbf{o}$-symmmetric $d$-dimensional parallelotope whose edges are parallel to the $\mathbf{v}_i$. Then, by Corollary~\ref{cor:Kronecker}, the family
\[
\mathcal{P}_{\varepsilon} = \{ \mathbf{n} + \varepsilon \mathbf{P} \ | \ \mathbf{n} \in \mathbb{Z}^d \}
\]
intersects every line parallel to an edge of $\mathbf{P}$. Thus, it intersects every hyperplane parallel to a facet of $\mathbf{P}$, yielding that it is weakly non-separable.
\end{proof}

Our last result shows that a similar statement does not hold for the $i$th weak covering minima of any $d$-dimensional convex polytope $\mathbf{P}$ for $2\leq i\leq d$.
 
\begin{thm}
Let $d \geq 2$, $\mathbf{P}$ be a $d$-dimensional convex polytope in $\Ee^d$ and $L \in \mathcal{L}^d$ be a lattice. Then $\mu_2^w (\mathbf{P}, L) > 0$. Furthermore, if $\mathcal{P}=L+\mathbf{P}$ is a weakly $(d-2)$-impassable lattice arrangement of translates of $\mathbf{P}$, then its density satisfies
\[
\delta(\mathcal{P}) \geq \frac{\vol(\mathbf{P})\vol(\mathbf{P}^{\circ})}{16 \delta(\mathbf{P}^{\circ})},
\]
where $\delta(\mathbf{P}^{\circ})$ denotes the maximum density of a lattice packing of the polar $\mathbf{P}^{\circ}$ of $\mathbf{P}$.
\end{thm}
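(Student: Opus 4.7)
The approach is to reduce the density inequality to Makai's bound \eqref{eq:Makai} by establishing the following reduction: \emph{every weakly $(d-2)$-impassable lattice arrangement $L+\mathbf{P}$ of a $d$-dimensional convex polytope $\mathbf{P}$ is a non-separable lattice arrangement in the sense of Section~1.} Once this is granted, \eqref{eq:Makai} applied to $L+\mathbf{P}$ yields the asserted density bound, and the positivity $\mu_2^w(\mathbf{P},L)>0$ will drop out by scaling.

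To prove the reduction I would show that every hyperplane $H\subset\Ee^d$ meets some member of $L+\mathbf{P}$; since $L$ has full rank, both open halfspaces bounded by $H$ contain lattice translates of $\mathbf{P}$, so this is enough to preclude strict separation. Fix any facet $F$ of $\mathbf{P}$, denote by $H_F$ its linear direction, and by $H_0$ the linear direction of $H$. A dimension count gives
\[
\dim(H_0\cap H_F)\;\ge\;2(d-1)-d\;=\;d-2,
\]
so one may choose a $(d-2)$-dimensional linear subspace $V\subseteq H_0\cap H_F$. For any $\mathbf{p}\in H$ the affine subspace $A:=\mathbf{p}+V$ lies in $H$, and because its direction $V$ is contained in $H_F$, some translate of $A$ lies in $\aff(F)$; in other words, $A$ is parallel to the facet $F$ in the sense of Definition~\ref{Bezdek-Langi extended}. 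Weak $(d-2)$-impassability therefore forces $A\cap(L+\mathbf{P})\neq\emptyset$, so $H\cap(L+\mathbf{P})\neq\emptyset$.

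With the reduction in hand, the density estimate is immediate from \eqref{eq:Makai}. For the positivity of $\mu_2^w(\mathbf{P},L)$, observe that
\[
\delta(L+t\mathbf{P})\;=\;\frac{t^{d}\vol(\mathbf{P})}{\det(L)}\;\longrightarrow\;0\qquad\text{as }t\to 0^{+}.
\]
On the other hand, if $L+t\mathbf{P}$ were weakly $(d-2)$-impassable, the density bound applied to $t\mathbf{P}$, together with the identities $(t\mathbf{P})^{\circ}=t^{-1}\mathbf{P}^{\circ}$, $\vol((t\mathbf{P})^{\circ})=t^{-d}\vol(\mathbf{P}^{\circ})$, and the scale-invariance $\delta((t\mathbf{P})^{\circ})=\delta(\mathbf{P}^{\circ})$, would yield the $t$-independent lower bound $\vol(\mathbf{P})\vol(\mathbf{P}^{\circ})/(16\,\delta(\mathbf{P}^{\circ}))>0$. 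Hence for all sufficiently small $t>0$ the arrangement $L+t\mathbf{P}$ fails to be weakly $(d-2)$-impassable, and $\mu_2^w(\mathbf{P},L)>0$ as desired.

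The main obstacle is the geometric reduction lemma: the point is to recognise that the freedom in choosing $V\subseteq H_0\cap H_F$ lets one insert a $(d-2)$-affine subspace parallel to a facet of $\mathbf{P}$ into an \emph{arbitrary} hyperplane $H$, which is exactly what converts weak $(d-2)$-impassability into full non-separability for lattice arrangements. Everything else---the density inequality and the positivity of $\mu_2^w$---then reduces to \eqref{eq:Makai} together with the homogeneity of volumes, polars, and lattice packing density.
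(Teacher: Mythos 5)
Your proposal is correct and follows essentially the same route as the paper: you reduce the theorem to the observation that a weakly $(d-2)$-impassable lattice arrangement is automatically a non-separable lattice arrangement, by inserting into an arbitrary hyperplane $H$ a $(d-2)$-dimensional affine subspace parallel to a facet of $\mathbf{P}$, and then invoke Makai's density bound. The only (minor) deviation is the positivity argument for $\mu_2^w(\mathbf{P},L)$: the paper simply notes that the non-separability reduction gives $\mu_2^w(\mathbf{P},L)\ge\mu_1(\mathbf{P},L)>0$, appealing to the Kannan--Lov\'asz covering minima, whereas you obtain positivity more self-containedly by applying the density bound to $L+t\mathbf{P}$ and letting $t\to 0^{+}$; both arguments are valid and, in fact, equivalent in spirit since the density bound is precisely what makes $\mu_1$ positive.
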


\begin{proof}
To prove the first statement, we prove that every weakly $(d-2)$-impassable lattice arrangement of a $d$-dimensional convex polytope $\mathbf{P}$ is non-separable; note that this yields $\mu_2^w (\mathbf{P}, L) \geq \mu_1 (\mathbf{P}, L)>0$ for every lattice $L \in \mathcal{L}^d$, where $\mu_1 (\mathbf{P}, L)$ is the first covering minimum of the lattice arrangement (see \cite{KaLo}).

To do it, assume that the arrangement $\mathcal{P}= \{ v + \mathbf{P} | v \in L \}$ is weakly $(d-2)$-impassable for some lattice $L$. Consider an arbitrary hyperplane $H$ of $\Ee^d$. If $H$ is parallel to a facet of $\mathbf{P}$, then by our assumptions $H$ intersects an element of $\mathcal{P}$. Thus, we may assume that $H$ is not parallel to any facet of $\mathbf{P}$. Then, if any $H'$ is any hyperplane parallel to a facet of $\mathbf{P}$, $A=H \cap H'$ is a $(d-2)$-dimensional affine subspace parallel to a facet of $\mathbf{P}$. By our assumptions, $A$ intersects an element of $\mathcal{P}$, showing that $\mathcal{P}$ is an NS-family.

To prove the second part, we apply Theorem 1 of Makai \cite{Ma78} to $\mathbf{P}$ in a straightforward way.
\end{proof}

\bigskip

\noindent K\'aroly Bezdek \\
\small{Department of Mathematics and Statistics, University of Calgary, Canada}\\
\small{Department of Mathematics, University of Pannonia, Veszpr\'em, Hungary\\
\small{E-mail: \texttt{kbezdek@ucalgary.ca}}

\medskip

\noindent Zsolt L\'angi \\
\small{Bolyai Institute, University of Szeged, Szeged, Hungary}\\ 
\small{HUN-REN Alfr\'ed R\'enyi Institute of Mathematics}\\ 
\small{\texttt{zlangi@server.math.u-szeged.hu}}

\end{document}